\newtheorem{theorem}{Theorem}[section]
\newtheorem{corollary}[theorem]{Corollary}
\newtheorem{lemma}[theorem]{Lemma}
\newtheorem{problem}[theorem]{Problem}
\newtheorem{proposition}[theorem]{Proposition}
\def\J#1#2#3{ \left\{ #1,#2,#3 \right\} }
\def\NN{{\mathbb{N}}}
\def\11{\textbf{$1$}}
\def\CC{{\mathbb{C}}}
\begin{document}

\title[A solution to Tingley's problem for compact C$^*$-algebras]{A solution to Tingley's problem for isometries between the unit spheres
of compact C$^*$-algebras and JB$^*$-triples}

\author[A.M. Peralta]{Antonio M. Peralta}

\address{Departamento de An{\'a}lisis Matem{\'a}tico, Facultad de
Ciencias, Universidad de Granada, 18071 Granada, Spain.}
\email{aperalta@ugr.es}

\author[R. Tanaka]{Ryotaro Tanaka}

\address{Faculty of Mathematics, Kyushu University, Fukuoka, 819-0395, Japan.}
\email{r-tanaka@math.kyushu-u.ac.jp}


\subjclass[2010]{Primary 47B49, Secondary 46A22, 46B20, 46B04, 46A16, 46E40, .}

\keywords{Tingley's problem; extension of isometries; JB$^*$-triples; compact operators}

\date{}

\begin{abstract} Let $f: S(E) \to S(B)$ be a surjective isometry between the unit spheres of two weakly compact JB$^*$-triples not containing direct summands of rank smaller than or equal to 3. Suppose $E$ has rank greater than or equal to 5. Applying techniques developed in JB$^*$-triple theory, we prove that $f$ admits an extension to a surjective real linear isometry $T: E\to B$. Among the consequences, we show that every surjective isometry between the unit spheres of two compact C$^*$-algebras $A$ and $B$ (and in particular when $A=K(H)$ and $B=K(H')$) extends to a surjective real linear isometry from $A$ into $B$. These results provide new examples of infinite dimensional Banach spaces where Tingley's problem admits a positive answer.
\end{abstract}

\maketitle
\thispagestyle{empty}

\section{Introduction}

Let $X$ and $Y$ be normed spaces, whose unit spheres are denoted by $S(X)$ and $S(Y)$, respectively. Suppose $f:S(X)\to S(Y)$ is a surjective isometry.  The so-called \emph{Tingley's problem} asks wether $f$ can be extended to a real-linear (bijective) isometry $T : X \to Y$ between the corresponding spaces (see \cite{Ting1987}). D. Tingley proved in \cite[THEOREM, page 377]{Ting1987} that every surjective isometry $f: X\to Y$ between the unit spheres of two finite dimensional spaces satisfies $f(-x) = -f(x)$ for every $x\in S(X)$.\smallskip

Mankiewicz established in \cite{Mank1972} that, given two convex bodies $V\subset X$ and $W\subset Y$, every surjective isometry $g$ from
$V$ onto $W$ can be uniquely extended to an affine isometry from $X$ onto $Y$. Consequently, every surjective isometry between the closed unit balls of two Banach spaces $X$ and $Y$ extends uniquely to a real-linear isometric isomorphism from $X$ into $Y$.\smallskip

Many authors have contributed with partial solutions to Tingley's problem for surjective isometries between the unit spheres of concrete Banach spaces. For example, we find affirmative answers to Tingley's problem in the setting of several classical real Banach spaces such as $\ell_p (\Gamma)$ (\cite{Di:p,Di:8,Di:1}), $L^p$-spaces (\cite{Ta:8,Ta:1,Ta:p}), and $C(X)$ spaces (\cite{Di:C,THL}). We refer to the monograph~\cite{Ding2009} for a good survey on the history of Tingley's problem. The solutions we know for this problem are based on interesting geometric ideas (compare \cite{Di,TL}). Tingley's problem was solved affirmatively in the case of finite dimensional polyhedral Banach spaces~\cite{KadMar2012}, and for the spaces belonging to a special class of Banach spaces called \emph{generalized lush spaces}~\cite{THL}. However, the problem is still open even in the simplest case of $X=Y$ and $\dim X=2$.\smallskip

Recently, the study of Tingley's problem on operator algebras was started by the second author of this note in \cite{Tan2016}, and we now have an affirmative answers in the case of surjective linear isometries between the unit spheres of finite dimensional C$^*$-algebras (see \cite{Tan2016-2}) and finite von Neumann algebras (cf. \cite{Tan2016preprint}). A key ingredient for the methods described in~\cite{Tan2016,Tan2016-2,Tan2016preprint} are results describing the facial structure of the unit ball of a $C^*$-algebra. Using the structure theorem for faces given by C.M. Edwards and G.T. R\"{u}ttimann~\cite{EdRutt88}, and C.A. Akemann and G.K. Pedersen~\cite{AkPed92}), we are in position to apply the characterization of the surjective isometries between the unitary groups of two von Neumann algebras given by O. Hatori and L. Moln\'{a}r~\cite{HM} to give an affirmative answer to Tingley's problem in the setting of finite von Neumann algebras.\smallskip

The purpose of this paper is to present some new partial solutions to Tingley's problem in the case of surjective isometries between the unit spheres of two weakly compact JB$^*$-triples not containing direct summands of rank smaller than or equal to 3. The class of JB$^*$-triples can be viewed as a Jordan generalization of the category of C$^*$-algebras. In the wider setting of JB$^*$-triples, a complete description of the norm closed faces of the closed unit ball was obtained by C.M. Edwards, C.S. Hoskin, F.J. Fern{\'a}ndez-Polo and the first author of this note in \cite{EdFerHosPe2010}. Using the result describing the facial structure of the closed unit ball of a JB$^*$-triple, we present here a completely new method to approach Tingley's problem in the Jordan setting.\smallskip

The main result in this note (see Theorem \ref{thm Tingley thm ofr weakly compact JB*-triples rank 5}) proves that every surjective isometry, $f: S(E) \to S(B)$, between the unit spheres of two weakly compact JB$^*$-triples not containing direct summands of rank smaller or equal than 3, where $E$ has rank greater or equal than 5, extends to a surjective real linear isometry $T: E\to B$. As an application, we particularly find an affirmative answer to Tingley's problem for surjective isometries between the unit spheres of two compact C$^*$-algebras (cf. Theorem \ref{thm Tingley compact Cstaralgebras}). We also prove that every surjective isometry between the unit spheres of two finite dimensional JB$^*$-triples, where the domain JB$^*$-triple has rank greater or equal than 5, extends to a surjective real linear isometry between the corresponding spaces (see Theorem \ref{thm Tingley thm ofr weakly compact JB*-triples rank 5 finite dimensional}). These results provide new examples of infinite dimensional Banach spaces where Tingley's problem admits a positive answer. The arguments in this note are completely new and independent from those studied in previous references. The results studying the geometry of JB$^*$-triples, and the Jordan theory itself provide a new point of view to tackle this open problem. Finally, we close the paper with an open problem which cannot be covered by our results.

\section{The contribution of Jordan theory to Tingley's problem}

We recall that a proper convex subset of the unit sphere of a normed space $X$ is said to be maximal if $C$ is not contained in any other proper maximal subset of $S(X)$. Let $f:S(X)\to S(Y)$ be a surjective isometry between the unit spheres of two Banach spaces. Applying Lemma 6.3 in \cite{Tan2016} we know that a convex set $C\subset S(X)$ is a maximal convex subset of $S(X)$ if and only if $T(C)$ satisfies the same property as a subset of $S(Y)$.\smallskip

Henceforth, the closed unit ball of a Banach space $X$ will be denoted by $X_1$. We also recall that a convex subset $F$ of $X_1$ is called a \emph{face} of $X_1$ if it satisfies the following property: a convex combination $t x + (1-t) y$, with $x,y \in X_1$ and $t\in [0,1]$, lies in $F$ if and only if $x$ and $y$ belong to $F$. It is worth to notice that a face $F$ of $X_1$ contains $0$ if and only if $F=X_1$, it is further known that every proper face of $X_1$ is contained in $S(X)$. Lemma 3.3 in \cite{Tan2016} shows how a suitable application of Eidelheit's separation theorem \cite[Theorem 2.2.26]{Megg98} proves that for every maximal convex subset $C$ of $S(X)$ there exists $\varphi$ in $S(X^*)$ satisfying  $C=\varphi^{-1} (\{1\}) \cap X_1$, in particular every maximal convex subset $C$ of $S(X)$ is a maximal proper norm closed face of $X_1$. Actually, by a result due to the second author of this note we know that a convex subset $C$ of the sphere of a Banach space $X$ is a maximal convex subset of $S(X)$ if and only if it is maximal as a proper face of $X_1$ (compare \cite[Lemma 3.2]{Tan2016preprint}).\smallskip

The arguments in the above two paragraphs can be applied to deduce the following lemma, which has been borrowed from \cite[Lemma 3.5]{Tan2014}.

\begin{lemma}\label{l 1.1}\cite[Lemma 3.5]{Tan2014} Let $f: S(X) \to S(Y)$ be a surjective isometry between the unit spheres of two Banach spaces, and let $C$ be a convex subset of $S(X)$. Then $C$ is a maximal proper face of $X_1$ if and only if $f(C)$ is a maximal proper closed face of $Y_1$.$\hfill\Box$
\end{lemma}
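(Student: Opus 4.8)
The plan is to deduce the statement purely by assembling the facts about maximal convex subsets of the unit sphere recalled in the two preceding paragraphs; indeed, the lemma is quoted verbatim from \cite[Lemma 3.5]{Tan2014}, so no genuinely new argument is required, only a careful bookkeeping of equivalences.

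First I would observe that, for an arbitrary Banach space $Z$, the three notions \emph{maximal convex subset of $S(Z)$}, \emph{maximal proper face of $Z_1$}, and \emph{maximal proper norm closed face of $Z_1$} coincide. By \cite[Lemma 3.2]{Tan2016preprint} a convex subset of $S(Z)$ is a maximal convex subset of $S(Z)$ precisely when it is maximal as a proper face of $Z_1$; and by \cite[Lemma 3.3]{Tan2016} (the application of Eidelheit's separation theorem recalled above) every maximal convex subset of $S(Z)$ has the form $\varphi^{-1}(\{1\}) \cap Z_1$ for some $\varphi \in S(Z^*)$, hence is norm closed. Consequently, being a maximal proper face of $Z_1$ automatically entails being norm closed, and the three properties are interchangeable. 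This is applied both to $Z = X$ and to $Z = Y$.

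Next comes the transfer step. Since $f : S(X) \to S(Y)$ is a surjective isometry, so is $f^{-1} : S(Y) \to S(X)$, and the paragraph above (i.e.\ \cite[Lemma 6.3]{Tan2016}) tells us that a convex subset $D \subseteq S(X)$ is a maximal convex subset of $S(X)$ if and only if $f(D)$ is a maximal convex subset of $S(Y)$. Applying this with $D = C$ and chaining with the equivalences of the previous step yields that $C$ is a maximal proper face of $X_1$ if and only if $C$ is a maximal convex subset of $S(X)$, if and only if $f(C)$ is a maximal convex subset of $S(Y)$, if and only if $f(C)$ is a maximal proper closed face of $Y_1$, which is exactly the assertion.

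I do not anticipate a real obstacle in this argument: the lemma is an assembly of previously established results, and the only point requiring care is that ``maximal proper face'' and ``maximal proper closed face'' agree, which is handled by the separation argument above. The genuinely substantive ingredient is \cite[Lemma 6.3]{Tan2016}, namely that a surjective isometry between unit spheres carries maximal convex subsets of the sphere onto maximal convex subsets of the sphere; reproving that fact from scratch would be the actual work, but in the present context it may be invoked directly.
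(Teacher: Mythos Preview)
Your proposal is correct and follows exactly the approach the paper itself indicates: the lemma is stated with a $\Box$ and no separate proof, the paper explicitly saying that ``the arguments in the above two paragraphs can be applied to deduce'' it. Those paragraphs invoke precisely the three ingredients you assemble---\cite[Lemma 6.3]{Tan2016} for the transfer of maximal convex subsets under $f$, \cite[Lemma 3.3]{Tan2016} for norm-closedness via Eidelheit separation, and \cite[Lemma 3.2]{Tan2016preprint} for the equivalence of maximal convex subsets of the sphere with maximal proper faces of the ball---so your write-up is a faithful expansion of what the paper leaves implicit.
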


The results commented above reveal that, in order to attack Tingley's problem, the facial structure of the corresponding closed unit balls of the spaces plays a fundamental role. The main result in \cite{EdFerHosPe2010} culminate the complete description of the norm closed faces of the closed unit ball of a JB$^*$-triple. Before stating the characterization theorem, we shall recall some definitions needed for our purposes.\smallskip

\subsection{Facial structure of a JB$^*$-triple}

Let $X$ be a complex Banach space with dual space $X^*$. Let $X_1$
and $X^*_1$ denote the unit balls in $X$ and $X^*$, respectively. Given subsets $F$ of $X_1$ and $G$ of $X^*_1$, we set
\begin{equation}\label{1.1}
F^{\prime} = \{a \in X^*_1 : a(x) = 1\,\, \forall x \in F\},\quad G_{\prime} = \{x \in X_1 : a(x) = 1\,\, \forall a \in G\}.
\end{equation}
Then, $F^{\prime}$ is a weak$^*$-closed face of $X_1^*$ and
$G_{\prime}$ is a norm closed face of $X_1$. The subset $F$ of $X_1$ is said to be
a norm-semi-exposed face of $X_1$ if $F$ coincides with $(F^{\prime})_{\prime}$
and the subset $G$ of $X^*_1$ is said to be a weak$^*$-semi-exposed face of $X^*_1$ if
$G$ coincides with $(G_{\prime})^{\prime}$.\smallskip

JB$^*$-triples are Jordan Banach structures which generalize the abstract properties of C$^*$-algebras. The notion of JB$^*$-triple was introduced by W. Kaup in 1983, and provides a precise set of algebraic-analytic axioms to characterize when the open unit ball of a complex Banach space is a bounded symmetric domain (a property of holomorphic nature, see \cite{Ka83} for more details). The concrete definition tells that a JB$^*$-triple is a complex Banach space $E$ equipped with a
continuous triple product $\J ... : E\times E\times E \to E,$ which is conjugate linear in the middle variable and symmetric and bilinear in the outer variables satisfying the following axioms:
\begin{enumerate}[{\rm $(a)$}] \item (Jordan Identity) $L(a,b) L(x,y) - L(x,y) L(a,b)= L(L(a,b)x,y) - L(x, L(b,a) y),$ for all $a,b,x,y,$ in $E$, where $L(x,y)$ is the linear operator defined by $L(a,b) (z) =\{a,b,z\}$ ($\forall z\in E$);
\item The operator $L(a,a)$ is hermitian and has non-negative spectrum;
\item $\|\J aaa\| = \|a\|^3$, for every $a\in E$.\end{enumerate}\smallskip

L.A. Harris established in \cite{Harris74} that the open unit ball of a C$^*$-algebra $A$ is a bounded symmetric domain, and hence every C$^*$-algebra $A$ lies in the category of JB$^*$-triples. It is known that the triple product \begin{equation}\label{eq triple product on Cstaralg} (a,b,c)\mapsto \{a,b,c\} =1/2 (a b^* c + c b^* a), \ \ (a,b,c\in A),
 \end{equation} satisfies all the axioms in the definition of JB$^*$-triple. The category of JB$^*$-triples contains many other examples of Banach spaces which are not C$^*$-algebras. For example, given two complex Hilbert spaces $H_1$ and $H_2$, the triple product defined in \eqref{eq triple product on Cstaralg} equips the space $L(H_1,H_2)$, of all bounded linear operators between $H_1$ and $H_2$, with a structure of JB$^*$-triple. In particular, every complex Hilbert space if a JB$^*$-triple. JB$^*$-triples of the form $L(H_1,H_2)$ are called Cartan factors of type 1. Clearly the space $K(H_1,H_2)$, of all compact operators from $H_1$ into $H_2$ is a JB$^*$-subtriple of $L(H_1,H_2)$. Additional examples can be given by the rest of Cartan factors, which are defined as follows. Let $j$ be a conjugation (i.e. a conjugate linear isometry or period 2) on a complex Hilbert
space $H$, The assignment $x\mapsto x^t:=jx^*j$ defines a linear involution on $L(H).$ A Cartan factor of type 2 (respectively, of type 3) is a complex Banach space which coincide with the JB$^*$-subtriple of $L(H)$ of all $t$-skew-symmetric (respectively, $t$-symmetric) operators.\smallskip

A Cartan factor of type 4, is a a complex Hilbert space provided
with a conjugation $ x\mapsto \overline{x},$ where triple product
and the norm are  given by \begin{equation}\label{eq spin product}
\{x, y, z\} = (x|y)z + (z|y) x -(x|\overline{z})\overline{y},
\end{equation} and $ \|x\|^2 = (x|x) + \sqrt{(x|x)^2 -|
(x|\overline{x}) |^2},$ respectively. All we need to know about Cartan factors of types 5 and 6 is that they are finite dimensional (see \cite[Example 2.5.31]{Chu2012} for additional details).\smallskip

One of the most interesting properties of JB$^*$-triples was established by W. Kaup in \cite{Ka83} and assures that a (complex) linear bijection between JB$^*$-triples is a triple isomorphism if and only if it is an isometry (compare \cite[Proposition 5.5]{Ka83} or \cite[Theorem 2.2]{FerMarPe2004}). The same conclusion is no longer true for real linear isometries between JB$^*$-triples, for such a mapping $T$ we only know that $T$ preserves cubes, that is, $T(\{x,x,x\}) = \{T(x),T(x),T(x)\}$ (compare \cite{Da, ChuDaRuVen, IsKaRo95} and \cite{FerMarPe}). The conclusion in Tingley's theorem makes more useful the study of real linear isometric surjections between JB$^*$-triples.\smallskip

A JB$^*$-triple which is also a dual Banach space is called a JBW$^*$-triple. For example, every von Neumann algebra is a JBW$^*$-triple. All Cartan factors are JBW$^*$-triples. Every JBW$^*$-triple admits a unique isometric predual, and its triple product is separately weak$^*$
continuous \cite{BarTi86}. Furthermore, the second dual of a JB$^*$-triple $E$ is a JBW$^*$-triple under a triple product extending the product of $E$
\cite{Di86}.\smallskip

An element $e$ in a JBW$^*$-triple $E$ is called \emph{tripotent} if $\{e,e,e\}=e$. The set of all tripotents in $E$ will be denoted by $\mathcal{U}(E)$. In this case the eigenvalues of the operator $L(e,e)$ are precisely $0,1/2$ and $1$ and $E$ decomposes as the direct sum of the corresponding eigenspaces, that is, $$E= E_{2} (e) \oplus E_{1} (e) \oplus E_0 (e),$$ where for $i=0,1,2,$ $E_i (e)$ is the $\frac{i}{2}$ eigenspace of $L(e,e)$
(compare \cite[Definition 1.2.37]{Chu2012}). This decomposition is called the \emph{Peirce decomposition} of $E$ with respect to the
tripotent $e$, and the projection of $E$ onto $E_i(e)$, which is denoted by $P_i(e)$, is called the Peirce $i$ projection.\smallskip

The so-called \emph{Peirce arithmetic} asserts that, for $k,j,l\in \{0,1,2\}$ we have $$ \{ E_k(e) , E_j (e), E_l (e)\} \subseteq E_{k-j+l} (e),$$ if $k-j+l \in\{0,1,2\}$, and is equal to $=\{0\}$ otherwise. Moreover $$ \{ E_0(e) , E_2 (e), E\}=\{0\} =  \{ E_2(e) , E_0 (e), E\}.$$

A tripotent $e$ in $E$ is called \emph{complete}, or \emph{unitary}, or \emph{minimal} if $E_0(E)=0$, or $E_2(e)=E$, or $E_2(e)=\CC e \neq \{0\},$ respectively. We say that $e$ has \emph{finite rank} if it can be written a sum of finitely many mutually orthogonal minimal tripotents.\smallskip

Every hermitian element in a C$^*$-algebra defines a commutative C$^*$-algebra and that allows us to define a local functional calculus. However, for non-normal elements the spectral resolutions and the functional calculus is hopeless. There is a certain advantage in the setting of JB$^*$-triples. Accordingly to the standard notation, given an element $x$ in a JB$^*$-triple $E$, we shall write $x^{[1]} := x$, $x^{[3]} := \J xxx$, and $x^{[2n+1]} := \J xx{x^{[2n-1]}},$ $(n\in \NN)$, while the symbol $E_x$ will stand for the JB$^*$-subtriple generated by the element $x$, that is, the norm closure of the linear span of all odd powers ${x^{[2n-1]}}$ ($n\in \mathbb{N}$) of $x$. It is known that
$E_x$ is JB$^*$-triple isomorphic (and hence isometric) to $C_0 (\Omega)$ for some locally compact Hausdorff space $\Omega$
contained in $(0,\|x\|],$ such that $\Omega\cup \{0\}$ is compact and $\|x\|\in \Omega$, where $C_0 (\Omega)$ denotes the Banach space of all
complex-valued continuous functions vanishing at $0.$ It is also known that there exists a triple isomorphism $\Psi$ from $E_x$ onto $C_{0}(\Omega),$ satisfying $\Psi (x) (t) = t$ $(t\in \Omega)$ (cf. \cite[Corollary 1.15]{Ka83}).\smallskip

Elements $a,b$ in a JB$^*$-triple $E$ are said to be \emph{orthogonal} (written $a\perp b$) if $L(a,b) =0$. It is known that $a\perp b$ $\Leftrightarrow$ $\J aab =0$ $\Leftrightarrow$ $\{b,b,a\}=0$ $\Leftrightarrow$ $b\perp a;$ $\Leftrightarrow$ $E_a \perp E_b$ (see, for example, \cite[Lemma 1]{BurFerGarMarPe}). Let $e$ be a tripotent in $E$. It follows from the Peirce arithmetic that $a\perp b$ for every $a\in E_2(e)$ and every $b\in E_0(e)$. It is known that $a\perp b$ in $E$ implies that $\|\lambda a + \mu b\| = \max\{\|\lambda a\| ,\|\mu b\|\}$ (compare \cite[Lemma\  1.3(a)]{FriRu85}).\smallskip

The relation ``being orthogonal'' is applied to define a partial order on the set of tripotents of a JB$^*$-triple $E$ defined by the following: given $u,e\in \mathcal{U} (E)$, we write $u \leq e$ if $e-u$ is a tripotent in $E$ and $e-u \perp u$. The following fact will be needed later. Let $e$ and $u$ be tripotents in a JB$^*$-triple $E$, then \begin{equation}\label{eq orthogonality via pm} u\perp e \Leftrightarrow u\pm e \hbox{ are a tripotents}
\end{equation} (see \cite[Lemma 3.6]{IsKaRo95}).\smallskip

When $a$ is a norm one element in a JBW$^*$-triple $W$, the sequence $(a^{[2n -1]})$ converges in the weak$^*$ topology of $W$ to a tripotent (called the \hyphenation{support}\emph{support} \emph{tripotent} of $a$) $s(a)$ in $W$ (compare \cite[Lemma 3.3]{EdRutt88} or \cite[page 130]{EdFerHosPe2010}). It is known that $a=s(a) + P_0 (s(a)) (a)$. For a norm one element $a$ in a JB$^*$-triple $E$, $s(a)$ will denote the support tripotent of $a$ in $E^{**}$.\smallskip

Following \cite{EdRu96}, a tripotent $e$ in the second dual of a JB$^*$-triple $E$ is said to be \emph{compact-$G_{\delta}$} if there exists a norm one element $a$ in $E$ satisfying $s(a) =e$. A tripotent $e$ in $E^{**}$ is called \emph{compact} if $e=0$ or it is the infimum of a decreasing net of compact-$G_{\delta}$ tripotents in $E^{**}$. The set of all compact tripotents in $E^{**}$ will be denoted by $\mathcal{U}_{c} (E^{**})$.\smallskip

According to \cite{FerPe06}, a tripotent $e\in E^{**}$ is said to be \emph{bounded} if there exists a norm one element $a$ in $E$ such that $P_2 (e) (a) =e$. It is known that, in these circumstances, $P_1 (e) (a) = 0$. We shall write $e\leq_{T} a$ when $a$ satisfies the above conditions. The relation
$\leq_{_T}$ is consistent with the natural partial order on the set of tripotents, that is, for any two tripotents $e$ and $u$ we
have $e\leq u$ if and only if $e\leq_{_T} u.$ Following the same reference, a tripotent $e$ in $E^{**}$ satisfying that $E^{**}_0 (e)\cap E$ is weak$^*$ dense in $E^{**}_0(e)$ is called \emph{closed} relative to $E$.\smallskip

The following characterization of compact tripotents in the second dual of a JB$^*$-triple has been borrowed from \cite[Theorem 2.6]{FerPe06} (see also \cite[Theorem 3.2]{FerPe10b}), and will be applied later.

\begin{theorem}\cite[Theorem 2.6]{FerPe06}\label{th FerPer compact trip} Let $E$ be a JB$^*$-triple. Then a tripotent $e$ in $E^{**}$ is compact if and only if $e$ is closed and bounded.$\hfill\Box$
\end{theorem}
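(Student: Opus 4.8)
The plan is to establish the two implications separately, treating the ``only if'' direction (compact $\Rightarrow$ closed and bounded) as the routine part and concentrating the real work on the converse, following the pattern of Akemann's theory of closed and compact projections of a C$^*$-algebra, of which the present statement is the Jordan analogue.

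For the necessity, boundedness comes first. A compact-$G_\delta$ tripotent $e=s(a)$ is bounded, since applying $P_2(e)$ to the identity $a=s(a)+P_0(s(a))(a)$ gives $P_2(e)(a)=e$, i.e.\ $e\leq_T a$. For a general compact tripotent $e=\inf_\lambda e_\lambda$, with $(e_\lambda)$ a decreasing net of compact-$G_\delta$ tripotents, I would pick one index $\lambda_0$, write $e_{\lambda_0}\leq_T a_{\lambda_0}$ with $a_{\lambda_0}\in E$, and use the Peirce-calculus identity $P_2(e)=P_2(e)P_2(e_{\lambda_0})$ (valid because $e\leq e_{\lambda_0}$, which also gives $P_2(e)(e_{\lambda_0})=e$) to conclude $P_2(e)(a_{\lambda_0})=e$, hence $e\leq_T a_{\lambda_0}$. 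Closedness then reduces to two facts: that a compact-$G_\delta$ tripotent $s(a)$ is closed relative to $E$, which I would read off the Gelfand model $E_a\cong C_0(\Omega)$ (with $a$ the identity on $\Omega\subseteq(0,\|a\|]$ and $s(a)$ the point mass at $\|a\|$) together with an approximation of $E^{**}_0(s(a))$ by triple products $\{g_n(a),g_n(a),x\}$, $x\in E$, where $g_n$ peaks away from $\|a\|$; and that the infimum of a decreasing net of closed tripotents is closed, which follows since $E^{**}_0(\inf_\lambda e_\lambda)$ is the weak$^*$-closure of $\bigcup_\lambda E^{**}_0(e_\lambda)$ while each $E^{**}_0(e_\lambda)\cap E$ is weak$^*$-dense in $E^{**}_0(e_\lambda)$.

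For the converse, assume $e\leq_T a$ with $a\in E$, $\|a\|=1$. The first step is a localisation: from $P_2(e)(a)=e$ and $P_1(e)(a)=0$ one gets $a=e+b$ with $b:=P_0(e)(a)\in E^{**}_0(e)$ and $e\perp b$; expanding odd powers and using $e\perp b$ to annihilate all mixed terms yields $a^{[2n-1]}=e+b^{[2n-1]}$, so $s(a)=e+s(b)$ (with $s(b)=0$ if $\|b\|<1$), whence $e\leq s(a)$, a compact-$G_\delta$ tripotent, and $s(a)-e=s(b)\in E^{**}_0(e)$. If $\|b\|<1$ then $s(a)=e$ is already compact-$G_\delta$; so assume $\|b\|=1$. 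Now I would adapt Akemann's conjugation technique: using that $e$ is closed — so $E^{**}_0(e)\cap E$ is weak$^*$-dense in $E^{**}_0(e)$, and, crucially, its closed unit ball is weak$^*$-dense in the unit ball of $E^{**}_0(e)$ (a Kaplansky-type refinement) — produce an increasing net $(u_\lambda)$ in the unit ball of $E^{**}_0(e)\cap E$ approximating a complete tripotent of $E^{**}_0(e)$ dominating $s(b)$, and from $a$ and the $u_\lambda$ manufacture norm-one elements $a_\lambda\in E$ with $e\leq_T a_\lambda$ and $s(b)\not\leq s(a_\lambda)$ for $\lambda$ large. The family $\{s(a_\lambda)\}$, closed under finite infima (which preserve the compact-$G_\delta$ property), is then a decreasing net of compact-$G_\delta$ tripotents dominating $e$, and a spectral/orthogonality computation modelled on the C$^*$-case shows its infimum is exactly $e$, so $e$ is compact.

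The hard part is precisely the construction in the last paragraph. Over a C$^*$-algebra one conjugates the bounding element $a$ by $(1-u_\lambda)^{1/2}$, and the result stays in the algebra because of the associative product and because the ``$1$'' cancels; over a JB$^*$-triple there is neither an associative product nor a unit, so the analogous surgery must be carried out purely through the triple product of $a$ and the $u_\lambda$ — for instance via the quadratic maps of the Peirce-$2$ JBW$^*$-algebras attached to auxiliary complete tripotents — while keeping the norm of the Peirce-$0$ component controlled and the outputs inside $E$. Two auxiliary lemmas carry most of this weight: that a finite infimum of compact-$G_\delta$ tripotents is again compact-$G_\delta$ (the counterpart of ``a finite intersection of compact $G_\delta$ sets is compact $G_\delta$''), proved by a common peaking element, and the closed-unit-ball version of the weak$^*$-density encoded in closedness; with these in hand, the remainder is careful Peirce-arithmetic bookkeeping along Akemann's original lines.
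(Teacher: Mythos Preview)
The paper does not prove this theorem at all: it is quoted verbatim from \cite{FerPe06} and closed with a $\Box$ immediately after the statement, with no proof given. There is therefore no ``paper's own proof'' against which to compare your proposal.

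For what it is worth, your sketch is broadly in line with the strategy of the original reference \cite{FerPe06}, which does indeed proceed by adapting Akemann's closed/compact projection theory to the triple setting: compact $\Rightarrow$ bounded is the easy observation you give; compact $\Rightarrow$ closed uses the local Gelfand model of $E_a$; and the converse hinges on manufacturing, from a bounding element $a\in E$ and the weak$^*$-density encoded in closedness, a decreasing family of compact-$G_\delta$ tripotents squeezing down to $e$. The genuine technical content --- which you correctly identify as the hard part --- is the replacement for conjugation by $(1-u_\lambda)^{1/2}$, carried out in \cite{FerPe06} via the Bergmann operators $B(x,y)$ and a Kaplansky-type density argument for the unit ball of $E^{**}_0(e)\cap E$, rather than through quadratic maps of auxiliary Peirce-$2$ algebras as you suggest. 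Your outline is a reasonable high-level plan, but since the present paper only cites the result, no comparison in the sense requested is possible.
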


After having introduced the necessary concepts, the norm closed faces of the closed unit ball of a JB$^*$-triple $E$ can be characterized in terms of the compact tripotents in $E^{**}$ via the next theorem, which due to Edwards, Fern{\'a}ndez-Polo, Hoskin and the first author of this note (see \cite{EdFerHosPe2010}).

\begin{theorem}\label{thm norm closed faces} Let $E$ be a JB$^*$-triple, and let $F$ be a non-empty norm closed face of
the unit ball $E_1$ in $E$. Then, there exists uniquely a compact tripotent
$u$ in $E^{**}$ such that
\[
F = (u + E_0^{**}(u)_1) \cap E = (\{u\}_{\prime})_{\prime},
\]
where $E_0^{**}(u)_1$ is the unit ball in the Peirce-zero space $E^{**}_0(u)$
in $E^{**}$ and $(\{u\}_{\prime})_{\prime}$ is the norm-semi-exposed face of $E_1$
corresponding to $u$ {\rm(}as defined in \eqref{1.1}{\rm)}. Furthermore,
the mapping $u \mapsto (\{u\}_{\prime})_{\prime}$ is an anti-order isomorphism from
$\tilde{\mathcal{U}}_c( E^{**})$ onto the complete lattice $\mathcal{F}_n(E_1)$ of norm closed
faces of $E_1$.$\hfill\Box$
\end{theorem}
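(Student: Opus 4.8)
The plan is to work in the bidual. I regard $E^{**}$ as a JBW$^*$-triple with predual $E^*$ \cite{BarTi86}, and for a tripotent $u\in E^{**}$ I write $F_u:=(u+E^{**}_0(u)_1)\cap E$. In outline: first I would check that each compact tripotent produces such a face and that $F_u=(\{u\}_{\prime})_{\prime}$ in the sense of \eqref{1.1}; then, from a given non-empty norm closed face $F$, I would recover a unique tripotent $u$ with $F=F_u$ by a polarity argument together with the Edwards--R\"{u}ttimann description of the facial structure of the predual of $E^{**}$, and verify that $u$ is compact; finally I would deal with the order reversal and the lattice completeness.

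Let $u\in E^{**}$ be compact. By Theorem \ref{th FerPer compact trip} it is bounded, so there is $a\in E$ with $\|a\|=1$ and $P_2(u)(a)=u$; then $P_1(u)(a)=0$ and $a=u+P_0(u)(a)\in F_u$, so $F_u\neq\emptyset$. The set $u+E^{**}_0(u)_1$ is a weak$^*$-closed face of $(E^{**})_1$ --- this is standard JBW$^*$-triple facial theory: applying $P_2(u)$ to a proper convex combination lying in it forces the Peirce-$2$ parts to equal the extreme point $u$ of $(E^{**}_2(u))_1$, and the norm constraint then kills the Peirce-$1$ parts --- so its intersection $F_u$ with the norm closed subspace $E$ is a norm closed face of $E_1$. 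A Peirce computation identifies $F_u$ with $(\{u\}_{\prime})_{\prime}$: if $x\in F_u$ and $a\in\{u\}_{\prime}$, then $a$ attains its norm at $u$, hence $a=a\circ P_2(u)$, so $a(x)=a(P_2(u)(x))=a(u)=1$; the reverse inclusion uses that $\{u\}_{\prime}$ is non-empty and weak$^*$-semi-exposed, which again follows from the compactness of $u$.

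For the converse, let $F$ be a non-empty norm closed face of $E_1$. If $F=E_1$ take $u=0$; otherwise $F$ is proper, so $F\subseteq S(E)$, and by the Hausdorff maximal principle $F$ is contained in a maximal proper face of $E_1$, which by Lemma 3.3 in \cite{Tan2016} equals $\varphi^{-1}(\{1\})\cap E_1$ for some $\varphi\in S(E^*)$; hence $\varphi\in F^{\prime}$ and $F^{\prime}\neq\emptyset$. One checks readily that $F^{\prime}$ is a weak$^*$-closed face of $E^*_1$ and that $F\subseteq(F^{\prime})_{\prime}$. The hard part is now twofold. On the one hand I would invoke the Edwards--R\"{u}ttimann theory \cite{EdRutt88} of the facial structure of the predual of the JBW$^*$-triple $E^{**}$: every weak$^*$-closed face of $E^*_1$ equals $\{u\}_{\prime}$ for a unique tripotent $u\in E^{**}$, with $u\mapsto\{u\}_{\prime}$ an order isomorphism onto those faces. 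On the other hand --- and this is the genuinely delicate point, which fails for general Banach spaces --- I must show that every norm closed face of $E_1$ is norm-semi-exposed, i.e.\ $F=(F^{\prime})_{\prime}$. Granting both, $F^{\prime}=\{u\}_{\prime}$ for a unique $u$, and $F=(F^{\prime})_{\prime}=(\{u\}_{\prime})_{\prime}=F_u$ by the previous paragraph. It remains to see that this $u$ is compact: it is bounded, since any $x\in F=F_u$ has $x-u\in E^{**}_0(u)_1$, so $P_2(u)(x)=u$, $P_1(u)(x)=0$, i.e.\ $u\leq_T x$ with $x\in E$; and it is closed relative to $E$ --- one may realize it as $u=\bigwedge\{s(x):x\in F\}$, an infimum of compact-$G_\delta$ tripotents (note $s(x)\geq u$ for each $x\in F$: writing $x=u+w$ with $u\perp w\in E^{**}_0(u)_1$ gives $x^{[2n-1]}=u+w^{[2n-1]}$, whose weak$^*$-limit is $u+s(w)\geq u$) --- using the correspondence between closed tripotents and norm-semi-exposed faces from \cite{FerPe06}. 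By Theorem \ref{th FerPer compact trip}, $u$ is compact.

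Finally, uniqueness of $u$ follows since $F_{u_1}=F_{u_2}$ gives $\{u_1\}_{\prime}=(F_{u_1})^{\prime}=(F_{u_2})^{\prime}=\{u_2\}_{\prime}$ (because $\{u_i\}_{\prime}$ is weak$^*$-semi-exposed), hence $u_1=u_2$. For the order reversal, if $u\leq v$ then $v-u\in E^{**}_0(u)$ is a tripotent orthogonal to $E^{**}_0(v)$, so each $y=v+y_0\in F_v$ equals $u+\big((v-u)+y_0\big)$ with $(v-u)+y_0\in E^{**}_0(u)$ of norm $\leq 1$, giving $y\in F_u$; thus $F_v\subseteq F_u$. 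Conversely $F_v\subseteq F_u$ yields $\{u\}_{\prime}=(F_u)^{\prime}\subseteq(F_v)^{\prime}=\{v\}_{\prime}$, whence $u\leq v$ (indeed $a(u)=1=\|a\|$ implies $a=a\circ P_2(u)$, which vanishes on $E^{**}_0(u)\ni v-u$, so $a(v)=1$; now use that $u\mapsto\{u\}_{\prime}$ is an order isomorphism). Hence $F_v\subseteq F_u\iff u\leq v$. Since $\mathcal{F}_n(E_1)$ is a complete lattice (arbitrary intersections of norm closed faces are norm closed faces, and joins are intersections of all norm closed faces containing the union), with greatest element $E_1=F_0$ and least element $\emptyset$, transporting this structure along the order-reversing bijection shows that $\tilde{\mathcal{U}}_c(E^{**})$ --- the compact tripotents with an adjoined greatest element, sent to $\emptyset$ --- is a complete lattice and that $u\mapsto(\{u\}_{\prime})_{\prime}$ is an anti-order isomorphism onto $\mathcal{F}_n(E_1)$. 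The main obstacle is the combination underlined above: setting up the Edwards--R\"{u}ttimann correspondence in $E^{**}$ and, above all, proving that norm closed faces of $E_1$ are norm-semi-exposed.
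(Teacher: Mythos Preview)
The paper does not prove this theorem at all: it is quoted verbatim from \cite{EdFerHosPe2010} and stated with a terminal $\Box$ and no argument. There is therefore no ``paper's own proof'' to compare your proposal against.

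That said, your outline is a fair reconstruction of the architecture of the actual proof in \cite{EdFerHosPe2010}: pass to the JBW$^*$-triple $E^{**}$, use the Edwards--R\"uttimann description of weak$^*$-closed faces of the predual ball in terms of tripotents, and then close the loop by showing that every norm closed face of $E_1$ is norm-semi-exposed and that the associated tripotent is compact (closed and bounded, via Theorem~\ref{th FerPer compact trip}). You are also right to flag the semi-exposedness step as the genuine difficulty; this is precisely the substantive content of \cite{EdFerHosPe2010}, and your sketch does not supply it --- you simply ``grant'' it. So as a proof this is incomplete: the core lemma (every proper norm closed face of $E_1$ equals $(F')_{'}$) is asserted, not proved, and that lemma is essentially the whole theorem. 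A couple of smaller points: your argument that $a\in\{u\}_{\prime}$ implies $a=a\circ P_2(u)$ needs the structure of norm-attaining functionals on a JBW$^*$-triple (this is in \cite{FriRu85} and \cite{EdRutt88}), not just a ``Peirce computation''; and the claim that $\{u\}_{\prime}$ is weak$^*$-semi-exposed for compact $u$ again leans on the machinery you are trying to establish.
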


Let $E$ be a JB$^*$-triple. It is not obvious but every minimal tripotent, actually every finite rank tripotent in $E^{**}$ is compact (see \cite[Theorem 3.4]{BuFerMarPe}). It is known that the minimal elements of the set $\mathcal{U}_{c} (E^{**})$ are precisely the minimal tripotents of $E^{**}$  (compare the comments before \cite[Corollary 3.5]{BuFerMarPe}). We shall see in the next subsection that finite rank tripotents can be also applied to characterize compact JB$^*$-triples.

\subsection{Weakly compact JB$^*$-triples}

The facial structure of the closed unit ball is much simpler in the case of a compact JB$^*$-triple. For this reason we briefly survey the basic notions of compact JB$^*$-triples. Motivated by the studies on compact C$^*$-algebras published by Alexander and Ylinen (see \cite{Alex,Yli}), Bunce and Chu introduced and studied in \cite{BuChu} the notions of compact and weakly compact JB$^*$-triples.

An element $x$ in a JB$^*$-triple $E$ is called \emph{compact} (respectively, \emph{weakly compact}) if the operator $Q(x):E\rightarrow E$, $z\mapsto \{x,z,x\}$ is compact (respectively,  weakly compact). The JB$^*$-triple $E$ is \emph{compact} (respectively, \emph{weakly compact}) if every element in $E$ is compact (respectively, weakly compact).\smallskip

The connections between compact JB$^*$-triples and finite rank tripotents are very useful to obtain a concrete representation of every compact JB$^*$-triple. Following the notation in \cite{BuChu}, the Banach subspace of a JB$^*$-triple $E$ generated by all its minimal tripotents will be denoted by $K(E)$. It is known that $K(E)$ is a (norm closed) triple ideal of $E$ which coincides with the set of all weakly compact elements in $E$ (see \cite[Proposition 4.7]{BuChu}). In order to have a more concrete representation, we recall that for each Cartan factor $C$ the elementary JB$^*$-triple associated with $C$ is precisely $K(C).$ That is, there are six different types of elementary JB$^*$-triples, denoted by $K_i$ $(i = 1, ... , 6)$, which defined as follows: $K_1 = K (H, H')$ (the compact operators between two complex Hilbert spaces $H$ and $H'$); $K_i = C_i \cap K(H)$ for $i = 2 , 3$, and $K_i = C_i$ for $i = 4, 5,
6$. The following structure theorem was established by Bunce and Chu in \cite{BuChu}.

\begin{theorem}\label{thm compact triples BuncChu}\cite[Lemma 3.3 and Theorem 3.4]{BuChu} Let $E$ be a JB$^*$ triple. Then $E$ is weakly compact if
and only if one of the following statement holds: \begin{enumerate}[$a)$] \item $K(E^{**})=K(E)$; \item $K(E)=E;$
\item $E$ is a $c_0$-sum of elementary JB$^*$-triples.$\hfill\Box$
\end{enumerate}
\end{theorem}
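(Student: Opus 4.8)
The plan is to prove the cycle (\emph{weakly compact}) $\Rightarrow$ (b) $\Rightarrow$ (c) $\Rightarrow$ (a) $\Rightarrow$ (b) $\Rightarrow$ (\emph{weakly compact}), using throughout the available fact \cite[Proposition 4.7]{BuChu} that $K(E)$ is exactly the set of weakly compact elements of $E$ and is a norm-closed triple ideal. The equivalence (\emph{weakly compact}) $\Leftrightarrow$ (b) is then immediate, since ``$E$ is weakly compact'' means precisely ``every element of $E$ is a weakly compact element'', i.e. $K(E)=E$. For (c) $\Rightarrow$ (b), note that each elementary JB$^*$-triple $K_i$ equals $K(K_i)$, being the norm-closed linear span of its minimal tripotents (rank-one partial isometries when $i=1,2,3$; immediate in the remaining finite-dimensional or reflexive cases); hence if $E=\bigoplus_\alpha^{c_0}K_\alpha$ then each $K_\alpha\subseteq K(E)$, and taking closed linear spans gives $E\subseteq K(E)\subseteq E$. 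For (c) $\Rightarrow$ (a), pass to biduals: $E^{**}=\bigoplus_\alpha^{\ell^\infty}K_\alpha^{**}$, and the bidual of an elementary triple is the corresponding Cartan factor, $K_\alpha^{**}=C_\alpha$; since the minimal tripotents of an $\ell^\infty$-sum are exactly those of its summands, $K(E^{**})=\bigoplus_\alpha^{c_0}K(C_\alpha)=\bigoplus_\alpha^{c_0}K_\alpha=E=K(E)$.

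The heart of the argument is (b) $\Rightarrow$ (c). Assuming $E=K(E)$, I would form the equivalence relation on the minimal tripotents of $E$ generated by ``being non-orthogonal'', and, using Peirce arithmetic and the classification of the possible mutual positions of two minimal tripotents (orthogonal, colinear, or one governing the other), show that the norm closure $I_\lambda$ of the linear span of a single equivalence class is a norm-closed triple ideal, that $I_\lambda\perp I_\mu$ for $\lambda\neq\mu$, and that each $I_\lambda$ is topologically simple. As $E=K(E)$ is the closed span of all its minimal tripotents, it is the closed span of the pairwise orthogonal family $\{I_\lambda\}$; orthogonality gives $\|\sum_\lambda x_\lambda\|=\sup_\lambda\|x_\lambda\|$ on finite sums, and weak compactness forces a $c_0$-sum: if the norms $\|x_\lambda\|$ failed to tend to $0$ for some $x=\sum_\lambda x_\lambda\in E$ one could exhibit an element whose $Q$-operator is not weakly compact, contradicting \cite[Proposition 4.7]{BuChu}. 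Finally, each $I_\lambda$ is a topologically simple weakly compact JB$^*$-triple, so $I_\lambda^{**}$ is a JBW$^*$-triple factor; by the classification of JBW$^*$-triple factors (the Cartan factors, see \cite{Chu2012}) it is some $C_i$, whence $I_\lambda=K(I_\lambda)=K(C_i)=K_i$ is elementary, giving (c).

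For (a) $\Rightarrow$ (b), observe first that the Peirce-two projection associated with a minimal tripotent $e$ of $E$ is weak$^*$-continuous on $E^{**}$ and sends the weak$^*$-dense subspace $E$ onto $E_2(e)=\CC e$, hence sends $E^{**}$ into $\CC e$; thus $e$ remains minimal in $E^{**}$ and $K(E)\subseteq K(E^{**})$ always, so (a) says $K(E^{**})=K(E)\subseteq E$. Given $x\in E$ with $\|x\|=1$, use $E_x\cong C_0(\Omega)$ and the compact spectral tripotents $u_\varepsilon(x)\in E^{**}$: $x$ is weakly compact precisely when each $u_\varepsilon(x)$ has finite rank. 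If some $u_\varepsilon(x)$ had infinite rank, it would dominate an infinite orthogonal sequence of minimal tripotents $f_n\in E^{**}$, and then a $c_0$-combination $\sum_n\lambda_n f_n$ would lie in $K(E^{**})=K(E)\subseteq E$; confronting this element with the spectral structure of $x$ inside $E$ produces a contradiction. Hence every $u_\varepsilon(x)$ has finite rank, $E_x\cong c_0$, and $x$ lies in the closed span of minimal tripotents, so $x\in K(E)$; as $x$ was arbitrary, $E=K(E)$.

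I expect the genuine obstacle to be the structural step (b) $\Rightarrow$ (c): turning the combinatorial equivalence relation on minimal tripotents into an honest orthogonal ideal decomposition with the required $c_0$-control, and, above all, invoking the full classification of JBW$^*$-triple factors to recognize each topologically simple summand as an elementary triple. The contradiction needed in (a) $\Rightarrow$ (b) --- excluding an infinite-rank spectral tripotent --- is comparatively minor but still demands a careful argument.
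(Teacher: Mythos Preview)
The paper does not prove this theorem at all: it is stated with an explicit citation to \cite[Lemma 3.3 and Theorem 3.4]{BuChu} and closed immediately with a $\Box$, so there is no ``paper's own proof'' to compare your proposal against. The result is imported wholesale from Bunce and Chu and used as a black box (notably to derive Corollary~\ref{c compact tripotents in the bidual of a weakly compact JBtriple} and throughout Section~3).

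Your sketch is therefore not a reconstruction of anything in this paper but an attempt to reprove the cited Bunce--Chu structure theorem from scratch. As such it is a plausible outline --- the equivalence (weakly compact) $\Leftrightarrow$ (b) via \cite[Proposition 4.7]{BuChu} is indeed immediate, and the implications (c) $\Rightarrow$ (a), (b) are routine --- but the two substantive steps you flag are real work: the decomposition (b) $\Rightarrow$ (c) needs the full machinery of the atomic/socle theory of JB$^*$-triples (connectedness of minimal tripotents, identification of the simple summands with elementary triples via the Cartan-factor classification of their biduals), and your argument for (a) $\Rightarrow$ (b) is only a heuristic (``confronting this element with the spectral structure of $x$ inside $E$ produces a contradiction'' is not yet a proof). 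If your goal is to supply a self-contained proof, you would essentially be rewriting the relevant sections of \cite{BuChu}; for the purposes of this paper, the correct move is simply to cite the result, as the authors do.
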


Let $E$ be a reflexive JB$^*$-triple. Clearly, every compact tripotent in $E^{**}$ lies in $E$. The same conclusion holds when $E$ is a weakly compact JB$^*$-triple. More concretely, suppose $E$ is a weakly compact JB$^*$-triple and $e$ is a compact tripotent in $E^{**}$. By Theorem \ref{th FerPer compact trip} there exists a norm one element $a$ in $E$ such that $e\leq_{T} a$. An application of Theorem \ref{thm compact triples BuncChu} assures that $e$ must be a finite rank tripotent in $E$.

\begin{corollary}\label{c compact tripotents in the bidual of a weakly compact JBtriple} Let $E$ be a weakly compact JB$^*$-triple. Then every compact tripotent in $E^{**}$ is a finite rank tripotent in $E$.$\hfill\Box$
\end{corollary}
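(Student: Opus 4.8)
The plan is to show that the compact tripotent $e$ actually lies in the triple ideal $K(E^{**})$, to identify this ideal with $K(E)=E$ via Theorem~\ref{thm compact triples BuncChu}, and then to read off the finiteness of the rank from the $c_0$-sum representation of $E$.

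First I would exploit that $e$ is compact in $E^{**}$: by Theorem~\ref{th FerPer compact trip} the tripotent $e$ is bounded, so there is a norm one element $a\in E$ with $e\leq_T a$, that is, $P_2(e)(a)=e$ (and $P_1(e)(a)=0$). Since $E$ is weakly compact, Theorem~\ref{thm compact triples BuncChu} gives both $K(E)=E$ and $K(E^{**})=K(E)$; in particular $a\in E=K(E^{**})$.

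Next, recalling that the Peirce-two projection is given by $P_2(e)(x)=\{e,\{e,x,e\},e\}$ and that $K(E^{**})$ is a triple ideal of $E^{**}$, we see that $P_2(e)$ maps $K(E^{**})$ into itself. Hence $e=P_2(e)(a)\in K(E^{**})=K(E)=E$, so $e$ is already a tripotent of $E$ and it only remains to check that it has finite rank. For this I would invoke Theorem~\ref{thm compact triples BuncChu}$(c)$ to write $E=K(E)$ as a $c_0$-sum $c_0\text{-}\bigoplus_{\lambda}K_{i_\lambda}$ of elementary JB$^*$-triples and decompose $e=(e_\lambda)_\lambda$. Since the summands are mutually orthogonal triple ideals, each $e_\lambda$ is a tripotent of $K_{i_\lambda}$; as $(e_\lambda)_\lambda$ lies in the $c_0$-sum and every nonzero tripotent has norm one, all but finitely many $e_\lambda$ vanish, say $e_\lambda=0$ unless $\lambda\in\{\lambda_1,\dots,\lambda_m\}$. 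It then suffices to observe that a nonzero tripotent $v$ of an elementary JB$^*$-triple $K_i$ has finite rank: for $i=4,5,6$ this holds because $K_i=C_i$ has finite rank as a JB$^*$-triple (types $5$ and $6$ being finite dimensional and type $4$ having rank $2$), while for $i=1,2,3$ the tripotent $v$ is a partial isometry belonging to $K(H)$ (or to $K(H,H')$), so $v^*v$ is a compact, hence finite rank, projection and thus $v$ has finite rank. Consequently $e=e_{\lambda_1}+\cdots+e_{\lambda_m}$ is a finite orthogonal sum of finite rank tripotents of $E$, so $e$ is a finite rank tripotent of $E$.

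The heart of the matter is the combination of the second and third steps: passing from ``$e$ is $\leq_T$-dominated by a weakly compact element of $E$'' to ``$e$ itself belongs to $K(E^{**})$'' through the stability of $K(E^{**})$ under $P_2(e)$, and then applying Theorem~\ref{thm compact triples BuncChu} — both the identity $K(E^{**})=K(E)$ and the $c_0$-decomposition — at the appropriate places. Once $e$ has been located inside an elementary triple, the finiteness of its rank is a routine consequence of the description of tripotents of elementary JB$^*$-triples, so I do not anticipate any serious obstacle there.
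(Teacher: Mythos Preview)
Your argument is correct and follows the same route the paper sketches: use boundedness from Theorem~\ref{th FerPer compact trip} to get $a\in E$ with $e\le_T a$, and then invoke Theorem~\ref{thm compact triples BuncChu} to place $e$ in $E$ with finite rank. The paper compresses all of this into the single sentence ``An application of Theorem~\ref{thm compact triples BuncChu} assures that $e$ must be a finite rank tripotent in $E$'', whereas you have spelled out the two implicit steps---that $e=P_2(e)(a)=\{e,\{e,a,e\},e\}$ lands in the ideal $K(E^{**})=K(E)=E$, and that a tripotent of a $c_0$-sum of elementary triples is automatically of finite rank---so your write-up is really a fleshed-out version of the paper's proof rather than a different approach.
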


\section{Solution to Tingley's problem for weakly compact JB$^*$-triples}

This section is devoted to obtain a complete solution to Tingley's problem in the case of surjective isometries between the unit spheres of arbitrary weakly compact JB$^*$-triples and several consequences in the setting of C$^*$-algebras and operator spaces. We begin with a key proposition which improves the conclusion of Lemma \ref{l 1.1}.\smallskip

Henceforth, given a vector $x_0$ in a Banach space $X$, the translation with respect to $x_0$ will be denoted by $\mathcal{T}_{x_0}$.

\begin{proposition}\label{p surjective isometries between the spheres preserve norm closed faces} Let $E$ and $B$ be weakly compact JB$^*$-triples, and  suppose that $f: S(E) \to S(B)$ is a surjective isometry. Then the following statements hold:\begin{enumerate}[$(a)$] \item For each minimal tripotent $e_1$ in $E$ there exists a unique minimal tripotent $u_1$ in $B$ such that $f( (e_1 + E_0^{**}(e_1)_1) \cap E )= (u_1 + B_0^{**}(u_1)_1) \cap B$;
\item The restriction of $f$ to each   maximal proper face of $E_1$ is an affine function;
\item For each minimal tripotent $e_1$ in $E$ there exists a unique minimal tripotent $u_1$ in $B$ such that $f(e_1 )= u_1$;
\item $f$ maps norm closed proper faces of $E_1$ to norm closed faces of $B_1$.
\end{enumerate}
\end{proposition}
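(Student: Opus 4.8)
The plan is to deduce all four statements from the facial characterization in Theorem \ref{thm norm closed faces}, Corollary \ref{c compact tripotents in the bidual of a weakly compact JB$^*$-triple}, and the duality between maximal proper faces and minimal tripotents. First I would observe that, since $E$ is weakly compact, Theorem \ref{thm norm closed faces} together with Corollary \ref{c compact tripotents in the bidual of a weakly compact JBtriple} gives a lattice anti-isomorphism $e\mapsto (\{e\}_{\prime})_{\prime} = (e + E_0^{**}(e)_1)\cap E$ between the finite rank tripotents of $E$ (living inside $E$) and the norm closed proper faces of $E_1$; the \emph{minimal} tripotents correspond exactly to the \emph{maximal} proper norm closed faces, because the anti-order isomorphism sends the minimal elements of $\mathcal{U}_c(E^{**})$ (which are the minimal tripotents) to the maximal elements of the face lattice. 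The same description holds in $B$. Now Lemma \ref{l 1.1} tells us that $f$ carries maximal proper faces of $E_1$ onto maximal proper faces of $B_1$; combining this with the two lattice descriptions yields, for each minimal tripotent $e_1$ of $E$, a unique minimal tripotent $u_1$ of $B$ with $f\big((e_1 + E_0^{**}(e_1)_1)\cap E\big) = (u_1 + B_0^{**}(u_1)_1)\cap B$. Uniqueness is immediate from the injectivity of the anti-isomorphism on the $B$ side. This proves $(a)$.

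For $(b)$, I would use the standard Mazur--Ulam / Mankiewicz-type rigidity for isometries between convex bodies recalled in the introduction: a maximal proper face $C$ of $E_1$ is a closed convex subset of the sphere, and by $(a)$ and Lemma \ref{l 1.1} its image $f(C)$ is again a maximal proper (closed) face, hence a closed convex subset of $S(B)$. The key point is that $f|_C : C \to f(C)$ is a surjective isometry between these two convex sets (surjectivity because $f$ is a global bijection and $f(C)$ is precisely the image), so Mankiewicz's theorem \cite{Mank1972} forces $f|_C$ to be the restriction of an affine map; in particular $f|_C$ is affine. Here one must be slightly careful that $C$ and $f(C)$ have nonempty interior relative to appropriate affine subspaces so that ``convex body'' applies — but $C = (e_1 + E_0^{**}(e_1)_1)\cap E$ is, by the weak compactness and finiteness of $e_1$, an affine translate of (the intersection with $E$ of) the unit ball of $E_0^{**}(e_1)$, a genuine convex body in its closed affine span $e_1 + \overline{E_0^{**}(e_1)\cap E}$, and the same for $f(C)$; so Mankiewicz applies verbatim inside that affine span. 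This gives $(b)$.

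For $(c)$, the idea is that a minimal tripotent $e_1$ is the \emph{barycenter-type distinguished point} of the face it generates, and more usefully that $e_1$ is characterized intrinsically inside $C = (e_1 + E_0^{**}(e_1)_1)\cap E$: it is the unique point $c\in C$ such that $C$ is balanced around it in the sense that $c + E_0^{**}(e_1)_1 \cap E$ is symmetric, equivalently $e_1$ is the unique element of $C$ with $\|e_1 + x\| = \|e_1 - x\|$ for all $x$ in the relevant space; alternatively, and more robustly, one uses that for a minimal tripotent, $E_0^{**}(e_1)_1 \cap E$ contains $0$, so $e_1 \in C$ and $e_1$ is the centre of the largest ball inside $C$ — a metric notion preserved by the isometry $f|_C$. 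Since $f|_C$ is a surjective affine isometry onto $f(C) = (u_1 + B_0^{**}(u_1)_1)\cap B$ (by $(a)$ and $(b)$), and the analogous distinguished centre of $f(C)$ is exactly the minimal tripotent $u_1$, we get $f(e_1) = u_1$. Uniqueness of $u_1$ is already part of $(a)$. I expect this identification of $e_1$ as a metrically distinguished point of its face to be the main obstacle: one needs a clean intrinsic description of the minimal tripotent within its maximal face that survives an arbitrary surjective isometry, and the cleanest route is probably to note $f|_C$ extends to a real-linear surjective isometry $T_C$ between the ambient spaces (Mankiewicz again, after subtracting basepoints), then argue that $T_C$ must carry the ``origin-shift'' $e_1$ of $C$ to that of $f(C)$ because $0\in E_0^{**}(e_1)_1\cap E$ and $0 \in B_0^{**}(u_1)_1\cap B$ are the respective centres of symmetry.

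Finally, $(d)$ follows by combining $(a)$--$(c)$ with the lattice structure. Every proper norm closed face $F$ of $E_1$ corresponds, via Theorem \ref{thm norm closed faces} and Corollary \ref{c compact tripotents in the bidual of a weakly compact JBtriple}, to a finite rank tripotent $e = e_1 + \cdots + e_n$ in $E$ with the $e_i$ mutually orthogonal minimal tripotents, and $F = \bigcap_{i=1}^n C_i$ where $C_i = (e_i + E_0^{**}(e_i)_1)\cap E$ is the maximal face associated with $e_i$ (this ``faces are intersections of maximal faces'' statement is exactly the anti-order isomorphism turning the join $e = \bigvee e_i$ into a meet of faces). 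By Lemma \ref{l 1.1} and part $(a)$, $f(C_i)$ is the maximal proper closed face $(u_i + B_0^{**}(u_i)_1)\cap B$ for minimal tripotents $u_i$ of $B$, and one checks the $u_i$ are mutually orthogonal — e.g. because $f|_{C_i\cap C_j}$ being affine forces the face $f(C_i\cap C_j) = f(C_i)\cap f(C_j)$ to be nonempty and to correspond to the tripotent $u_i + u_j$, which is a tripotent only when $u_i\perp u_j$ by \eqref{eq orthogonality via pm}. Hence $u = u_1 + \cdots + u_n$ is a finite rank (compact) tripotent in $B$, and $f(F) = f\big(\bigcap C_i\big) = \bigcap f(C_i) = (u + B_0^{**}(u)_1)\cap B$ is precisely the norm closed face of $B_1$ associated with $u$. (One uses $f$ surjective and bijective to pass $f$ through intersections of the $C_i$.) This establishes $(d)$ and completes the proof.
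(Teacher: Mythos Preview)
Your arguments for $(a)$ and $(b)$ are essentially the paper's. For $(c)$ you arrive at the right idea but take a detour: once Mankiewicz gives you a real-linear surjective isometry $T_{e_1}:E_0(e_1)\to B_0(u_1)$ with $f(e_1+x)=u_1+T_{e_1}(x)$ for all $x$ in the ball, setting $x=0$ gives $f(e_1)=u_1$ immediately; the ``centre of symmetry'' discussion is not needed.

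For $(d)$ you take a genuinely different route from the paper. The paper simply embeds $F$ in a \emph{single} maximal face $C=\{e_1\}_{\prime\prime}$ (for any minimal $e_1\leq e$), uses that $f|_C$ is affine to see that $f(F)$ is convex and closed, and then checks the face property directly using that $f^{-1}|_{f(C)}$ is also affine. Your route---write $F=\bigcap_i C_i$ via the anti-order isomorphism and use bijectivity of $f$ to get $f(F)=\bigcap_i f(C_i)$---also reaches the goal, since an intersection of norm-closed faces of $B_1$ is again a norm-closed face.

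However, your justification that the $u_i$ are mutually orthogonal has a real gap. You assert that $f(C_i)\cap f(C_j)$ ``corresponds to the tripotent $u_i+u_j$'', but the anti-order isomorphism of Theorem~\ref{thm norm closed faces} only tells you it corresponds to the supremum $u_i\vee u_j$ in the tripotent order; nothing so far forces $u_i+u_j$ to be a tripotent, so invoking \eqref{eq orthogonality via pm} is circular. In fact the paper establishes $f(e_i)\perp f(e_j)$ only later (Theorem~\ref{t Tingley antipodes for finite rank}), and under the additional hypothesis that $E$ and $B$ have no direct summands of rank $\leq 3$. The good news is that this orthogonality is entirely unnecessary for $(d)$: you do not need to identify \emph{which} tripotent $f(F)$ corresponds to, only that $f(F)$ is a norm-closed face, and that already follows from $f(F)=\bigcap_i f(C_i)$. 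Drop the orthogonality claim and your argument for $(d)$ is complete.
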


\begin{proof} Let $M\subseteq S(E)$ be a maximal proper (norm closed) face of $E_1$. Lemma \ref{l 1.1} implies that $f(M)$ is maximal proper (norm closed) face of $E_1$. Combining Theorem \ref{thm norm closed faces} and subsequent comments with Corollary \ref{c compact tripotents in the bidual of a weakly compact JBtriple}, we deduce the existence of minimal tripotents $e_1\in E$ and $u_1\in B$ such that $M = (e_1 + E_0^{**}(e_1)_1) \cap E $ and $f(M) = (u_1 + B_0^{**}(u_1)_1) \cap B$. Since by Theorem \ref{thm norm closed faces} and Corollary \ref{c compact tripotents in the bidual of a weakly compact JBtriple} minimal tripotents in $E$ and $B$ are in one-to-one correspondence with the maximal proper (norm closed) face of $E_1$ and $B_1$, respectively, the statement in $(a)$ follows from the above arguments. \smallskip

We also know that, under the above assumptions, $E_0^{**}(e_1)\cap E=E_0(e_1)$ and $ B_0^{**}(u_1) \cap B=B_0(u_1)$ are weak$^*$-dense (norm-closed) subspaces of $E_0^{**}(w_1)$ and $ B_0^{**}(u_1)_1$, respectively, whose closed unit balls are precisely $E_0(e_1)_1$ and $ B_0(u_1)_1$, respectively. The mapping $f_{e_1} = \mathcal{T}_{u_1}^{-1}|_{f(M)} \circ f|M \circ \mathcal{T}_{e_1}|_{E_0(e_1)_1 }$ is a surjective isometry from $E_0(e_1)_1 $ onto $ B_0(u_1)_1$. Mankiewicz's theorem (see \cite{Mank1972}) assures the existence of a surjective real linear isometry $T_{e_1} : E_0(e_1)\to  B_0(u_1)$ such that $f_{e_1} = T_{e_1}|_{S(E_0(e_1))}$. Since translations and linear isometries are affine functions, the identity $f|_{M} =  \mathcal{T}_{e_1}^{-1} |_{M} \circ T_{e_1}|_{S(E_0(e_1))} \circ \mathcal{T}_{u_1}|_{B_0(u_1)_1 B}$ proves that $f|_{M}$ is an affine function, which proves $(b)$. We also know that $f(e_1) = u_1$, which gives $(c)$.\smallskip

We shall finally prove $(d)$. Let $F$ be a norm closed proper face of $E_1$. As before, an appropriate combination of Theorem \ref{thm norm closed faces} and Corollary \ref{c compact tripotents in the bidual of a weakly compact JBtriple} implies the existence of a finite rank tripotent $e$ in $E$ such that $F = (e + E_0^{**}(e)_1) \cap E =\{e\}_{\prime\prime}$. Take a minimal tripotent $e_1$ such that $e_1\leq e$. Since $F\subseteq \{e_1\}_{\prime\prime}$, the latter is a maximal proper face of $E_1$, and, by $(b)$, $f|_{\{e\}_{\prime\prime}}$ is affine, we deduce that $f(F)$ is a convex subset of $S(B)$. Applying that $f$ is an isometry we can easily see that $f(F)$ is closed. Suppose that $t a + (1-t) b = f(c)$, where $a,b\in S(B)$, $c\in F$ and $t\in (0,1)$. Pick a maximal proper face $M_1\subset S(B)$ such that $f(\{e\}_{\prime\prime}) =M_1$. Since $M_1$ is a norm closed face of $B_1$, it follows that $a,b\in M_1$. Applying statement $(b)$ to $f^{-1}|_{M_1}$ we get $t f^{-1} (a) + (1-t) f^{-1} (b) = f^{-1} (t a + (1-t) b)  = c\in F,$ and thus $a,b\in f(F)$, because $F$ is a face. This shows that $f(F)$ is a norm closed proper face of $B_1$.
\end{proof}

The next result, whose proof is essentially based on the arguments given in the proof previous proposition, goes deeper in the conclusions given above.

\begin{proposition}\label{p surjective isometries between the spheres preserve finite rank tripotents} Let $E$ and $B$ be weakly compact JB$^*$-triples, and  suppose that $f: S(E) \to S(B)$ is a surjective isometry. Then the following statements hold:\begin{enumerate}[$(a)$] \item For each finite rank tripotent $e$ in $E$ there exists a unique finite rank tripotent $u$ in $B$ such that $f( (e + E_0^{**}(e)_1) \cap E )= (u + B_0^{**}(u)_1) \cap B$;
\item The restriction of $f$ to each norm closed face of $E_1$ is an affine function;
\item For each finite rank tripotent $e$ in $E$ there exists a unique finite rank tripotent $u$ in $B$ and a surjective real linear isometry $T_e : E_0(e) \to B_0(u)$ such that $$f(e +x )= u + T_e (x),$$ for every $x\in E_0(e)_1 $;
\item For each finite rank tripotent $e$ in $E$ there exists a unique finite rank tripotent $u$ in $B$ such that $f(e )= u$.
\end{enumerate}
\end{proposition}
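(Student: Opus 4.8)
The plan is to extend the pattern of reasoning used in Proposition \ref{p surjective isometries between the spheres preserve norm closed faces}, upgrading everything from minimal tripotents to arbitrary finite rank tripotents, by exploiting the fact (recorded after Theorem \ref{thm norm closed faces}) that every finite rank tripotent in $E^{**}$ is compact, together with Corollary \ref{c compact tripotents in the bidual of a weakly compact JBtriple}, which places all compact tripotents of $E^{**}$ inside $E$ itself. For part $(a)$: given a finite rank tripotent $e$ in $E$, Theorem \ref{thm norm closed faces} tells us $F:=(e+E_0^{**}(e)_1)\cap E=\{e\}_{\prime\prime}$ is a norm closed proper face of $E_1$, and that the correspondence $u\mapsto \{u\}_{\prime\prime}$ is an anti-order isomorphism from $\tilde{\mathcal U}_c(E^{**})$ (which here, by Corollary \ref{c compact tripotents in the bidual of a weakly compact JBtriple}, consists precisely of the finite rank tripotents of $E$, together with the top element) onto $\mathcal F_n(E_1)$. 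By Proposition \ref{p surjective isometries between the spheres preserve norm closed faces}$(d)$, $f(F)$ is a norm closed proper face of $B_1$, so again by Theorem \ref{thm norm closed faces} and Corollary \ref{c compact tripotents in the bidual of a weakly compact JBtriple} there is a unique finite rank tripotent $u$ in $B$ with $f(F)=(u+B_0^{**}(u)_1)\cap B$. Applying the same reasoning to $f^{-1}$ shows the assignment $e\mapsto u$ is a bijection on finite rank tripotents, giving uniqueness; this yields $(a)$, and evaluating at $x=0$ yields $(d)$.

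For part $(b)$, let $F$ be any norm closed proper face of $E_1$; write $F=\{e\}_{\prime\prime}$ for a finite rank tripotent $e$, and choose a minimal tripotent $e_1\le e$, so that $F\subseteq \{e_1\}_{\prime\prime}$ and the latter is a maximal proper face on which $f$ is affine by Proposition \ref{p surjective isometries between the spheres preserve norm closed faces}$(b)$. Restricting an affine map to a convex subset is still affine, and $F$ is convex, so $f|_F$ is affine; since $f$ is an isometry, $f(F)$ is norm closed. (Indeed this is exactly the argument already used in the proof of Proposition \ref{p surjective isometries between the spheres preserve norm closed faces}$(d)$, now observed to give affineness on all of $F$, not merely that $f(F)$ is a face.) The only subtlety is that the Peirce-zero space $E_0^{**}(e)$ need not itself meet $E$ in a maximal-face situation, but this is not needed: all we use is the single containment $F\subseteq\{e_1\}_{\prime\prime}$ and affineness on the bigger maximal face. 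This proves $(b)$.

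For part $(c)$, fix a finite rank tripotent $e$ in $E$ and let $u$ be the finite rank tripotent in $B$ provided by $(a)$. As in the proof of Proposition \ref{p surjective isometries between the spheres preserve norm closed faces}$(b)$, use that $E_0^{**}(e)\cap E=E_0(e)$ is a weak$^*$-dense norm-closed subspace of $E_0^{**}(e)$ whose closed unit ball is $E_0(e)_1$ — here one invokes Theorem \ref{thm compact triples BuncChu} / the weak compactness of $E$ to identify $E_0^{**}(e)\cap E$ as $E_0(e)$ and to see its unit ball is $E_0(e)_1$, and similarly for $B$ and $u$. Then $f_e:=\mathcal T_u^{-1}|_{f(F)}\circ f|_F\circ\mathcal T_e|_{E_0(e)_1}$ is a surjective isometry from $E_0(e)_1$ onto $B_0(u)_1$, and Mankiewicz's theorem \cite{Mank1972} produces a surjective real linear isometry $T_e:E_0(e)\to B_0(u)$ extending it; rewriting gives $f(e+x)=u+T_e(x)$ for all $x\in E_0(e)_1$, which is $(c)$.

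The main obstacle I anticipate is not in any single step but in the bookkeeping behind part $(d)$'s uniqueness claim and the precise identification $E_0^{**}(e)\cap E=E_0(e)$ with the correct unit ball: one must be careful that Corollary \ref{c compact tripotents in the bidual of a weakly compact JBtriple} really does give a \emph{bijective} anti-order correspondence between finite rank tripotents in $E$ and norm closed proper faces of $E_1$ (as opposed to merely an injection), so that uniqueness of $u$ transfers cleanly through $f$ and $f^{-1}$; and that the Peirce-zero spaces behave well enough under the bidual to let Mankiewicz's theorem apply on the nose. Once those identifications are in place, parts $(a)$--$(d)$ follow by the same geometric mechanism as in the previous proposition, just with ``minimal'' replaced by ``finite rank'' throughout.
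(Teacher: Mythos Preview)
Your approach is essentially the paper's: part $(a)$ via Theorem \ref{thm norm closed faces}, Corollary \ref{c compact tripotents in the bidual of a weakly compact JBtriple}, and Proposition \ref{p surjective isometries between the spheres preserve norm closed faces}$(d)$; parts $(b)$ and $(c)$ via Mankiewicz applied to the translated closed unit balls $E_0(e)_1$ and $B_0(u)_1$. (For $(b)$ the paper deduces affineness on $\{e\}_{\prime\prime}$ directly from the Mankiewicz extension $T_e$ rather than by restricting from a containing maximal face as you do, but your argument is equally valid and is indeed implicit in the proof of Proposition \ref{p surjective isometries between the spheres preserve norm closed faces}$(d)$.)

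One genuine slip in ordering: you assert that $(d)$ follows from $(a)$ ``by evaluating at $x=0$'', but $(a)$ only gives the equality of faces $f(\{e\}_{\prime\prime})=\{u\}_{\prime\prime}$ as \emph{sets}; it does not by itself single out $f(e)=u$, since a priori $f(e)$ could be any point $u+y$ with $y\in B_0(u)_1$. The identification $f(e)=u$ requires the affine/linear structure coming from Mankiewicz, i.e.\ part $(c)$, after which setting $x=0$ indeed gives $(d)$. Since you do establish $(c)$, the fix is just to reorder: derive $(c)$ from $(a)$ via Mankiewicz, then read off $(d)$ from $(c)$ at $x=0$ --- exactly as the paper does.
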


\begin{proof} $(a)$ Follows from Theorem \ref{thm norm closed faces}, Corollary \ref{c compact tripotents in the bidual of a weakly compact JBtriple} and Proposition \ref{p surjective isometries between the spheres preserve norm closed faces}$(d)$.\smallskip

Let $e$ be a finite rank tripotent in $E$. By $(a)$ for each finite rank tripotent $e$ in $E$ there exists a unique finite rank tripotent $u$ in $B$ such that $$f( (e + E_0^{**}(e)_1) \cap E )= (u + B_0^{**}(u)_1) \cap B.$$ Arguing as in the proof of Proposition \ref{p surjective isometries between the spheres preserve norm closed faces}, the mapping $f_{e} = \mathcal{T}_{u}^{-1}|_{f(\{e\}_{\prime\prime})} \circ f|_{\{e\}_{\prime\prime}} \circ \mathcal{T}_{e}|_{E_0(e)_1}$ is a surjective isometry from $E_0(e)_1$ onto $ B_0(u)_1$. By Mankiewicz's theorem (see \cite{Mank1972}) there exists a surjective real linear isometry $T_{e} : E_0(e)\to  B_0(u)$ such that $f_{e} = T_{e}|_{S(E_0(e))}$. Therefore $f|_{\{e\}_{\prime\prime}} =  \mathcal{T}_{e}^{-1} |_{\{e\}_{\prime\prime}} \circ T_{e}|_{S(E_0(e))} \circ \mathcal{T}_{u}|_{B_0(u)_1}$ is an affine function, which proves $(b)$. We further know that $$f(e +x )= u + T_e (x),$$ for every $x\in E_0(e)_1 $ and $f(e) = u$, which gives $(c)$ and $(d)$.\end{proof}

We recall that the rank of a finite rank tripotent $e$ in a JB$^*$-triple $E$ is the unique natural $k$ satisfying that $e$ writes as a sum of $k$ mutually orthogonal minimal tripotents in $E$. A subset $S$ of a JB$^*$-triple $E$ is called \emph{orthogonal} if $0 \notin S$ and $x \perp y$ for every $x\neq y$ in $S$. The minimal cardinal number $r$ satisfying $card(S) \leq r$ for every orthogonal subset $S \subseteq E$ is called the \emph{rank} of $E$.\smallskip

We are now in position to prove that a surjective isometry between the unit spheres of two weakly compact JB$^*$-triples maps tripotents of rank $k$ to tripotents of rank $k$.  We begin with some technical lemmas. The first one is a geometric version of \eqref{eq orthogonality via pm} (see \cite[Lemma 3.6]{IsKaRo95}), a version of which was considered in \cite{FerMarPe2012}. \smallskip

We recall that, given a subset $S$ of the closed unit ball of a Banach space $X$, the set of all
contractive perturbations of $S$, $\hbox{\rm cp}(S)$, is defined by $$  \hbox{\rm cp}(S) =\left\{ x\in X : \|x \pm s \| \leq 1, \hbox{for every $s\in S$}\right\} \hbox{  (see \cite{FerMarPe2012}).}$$ 

\begin{lemma}\label{l two finite rank tripotents at distance 1 are orthogonal}\cite{FerMarPe2012} Let $e$ be a tripotent in a JB$^*$-triple $E$. Let $x$ be an element in $E_1$ satisfying $\|e\pm x\| = 1$. Then $x\perp  e$.
\end{lemma}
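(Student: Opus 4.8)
The plan is to work inside the JB$^*$-subtriple $E_e$ generated by $e$ together with the Peirce decomposition associated to $e$, and to reduce the statement to the two-dimensional (or at worst rank-one) situation where the geometry is completely transparent. First I would write $x = P_2(e)(x) + P_1(e)(x) + P_0(e)(x)$ and observe that it suffices to control the Peirce-$2$ and Peirce-$1$ parts: since $e \perp P_0(e)(x)$, the condition $\|e \pm x\|=1$ combined with the orthogonality relation $\|\lambda a + \mu b\| = \max\{\|\lambda a\|,\|\mu b\|\}$ (cited from \cite[Lemma 1.3(a)]{FriRu85}) forces an inequality relating the norms of the various Peirce components; the real content is to show $P_2(e)(x) = 0 = P_1(e)(x)$, i.e. $x \in E_0(e)$, which by the orthogonality characterization $a\perp e \Leftrightarrow a \in E_0(e)$-type statements is exactly $x \perp e$ when $e$ is a tripotent.

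Next I would exploit the identity $e = s(e)$ and the local functional calculus: in $E_e \cong C_0(\Omega)$ the element $e$ corresponds to a function taking values in $\{0,1\}$ (indeed $e$ is a tripotent of $E_e$, and up to the triple isomorphism $\Psi$ we may assume $\Psi(e)$ is the characteristic-type generator), so that the "directional" analysis of $\|e\pm x\|$ can be localized. The cleanest route, however, is probably via support tripotents and states: pick a norm-one functional $\varphi \in B^*$... no — pick $\varphi \in E^*$ norming $e$, i.e. with $\varphi(e)=1$; such $\varphi$ factors through the Peirce-$2$ space $E_2^{**}(e)$ in the sense that $\varphi = \varphi \circ P_2(e)^{**}$ for a suitable choice (this is the standard fact that a state attaining its norm at a tripotent $e$ vanishes on $E_1(e)\oplus E_0(e)$). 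Then from $|\varphi(e \pm x)| \le \|e\pm x\| = 1$ and $\varphi(e) = 1$ we get $\mathrm{Re}\,\varphi(x) = 0$ and, varying $\varphi$ over all norming functionals, conclude $P_2(e)(x)=0$. A separate argument — using a minimal tripotent $e_1 \le e$ and the fact that $P_1(e)(x)$ would create a genuine enlargement of the norm in the direction $e_1 + t\,(\text{partial isometry-like piece})$ — disposes of the Peirce-$1$ part; here one can invoke that for a tripotent $e$ and $y \in E_1(e)$ one has $\|e + y\|^2 \ge 1 + \|\{e,e,y\}\|^2/(\cdots)$-type estimates, or more simply the known fact that $\|e+y\| = \|e\| = 1$ with $y\in E_1(e)$ forces $y=0$ (a consequence of $\{e,e,y\}=\tfrac12 y$ and the axiom $\|\{a,a,a\}\| = \|a\|^3$ applied along $e_y$).

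The main obstacle I anticipate is the Peirce-$1$ part: unlike the Peirce-$2$ component, which is killed cleanly by norming functionals, an element $y\in E_1(e)$ interacts with $e$ in a way that is not captured by a single state, and the naive estimate only gives $\|e\pm y\| \ge 1$ rather than a strict inequality. To handle it I would either (i) reduce to the C$^*$-algebra / Cartan factor picture where $e$ is a partial isometry, $E_1(e)$ consists of operators "off-diagonal" with respect to the source/range projections of $e$, and an explicit $2\times 2$ computation of $\left\| \begin{pmatrix} 1 & \bar\lambda \\ 0 & 0\end{pmatrix}\right\|$ shows the norm strictly exceeds $1$ unless $\lambda = 0$; or (ii) argue abstractly via the joint triple subtriple generated by $e$ and $y$, which is finite-dimensional once $y\in E_1(e)$ is suitably truncated, and invoke the structure of spin factors / type-$1$ factors. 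Since the ambient triples here are weakly compact, every relevant finite-rank reduction is available, and \cite[Lemma 3.6]{IsKaRo95} (equation \eqref{eq orthogonality via pm}) is exactly the tripotent-to-tripotent special case this lemma generalizes, so I expect the proof to proceed by first establishing $P_2(e)(x) + P_1(e)(x) = 0$ and then reading off $x\perp e$ from $x = P_0(e)(x)$.
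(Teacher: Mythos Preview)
Your approach via the Peirce decomposition is a genuinely different route from the paper's: the paper gives a one-line proof by quoting the identity $\{e\}^{\perp}\cap E_1 = \hbox{\rm cp}(\{e\})$ from \cite{FerMarPe2012} (see also \cite[Corollary~4.3]{EdHu}) and observing that the hypothesis $\|e\pm x\|\le 1$ says exactly that $x\in\hbox{\rm cp}(\{e\})$. No Peirce analysis, no functionals---the heavy lifting is outsourced to the cited result.

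Your Peirce-$2$ step is essentially right, though you undersell it. From $|1+\varphi(x)|\le 1$ and $|1-\varphi(x)|\le 1$ you get $\varphi(x)=0$ outright (not merely $\mathrm{Re}\,\varphi(x)=0$): squaring and adding gives $|\varphi(x)|^2\le 0$. Since every state on the unital JB$^*$-algebra $E_2(e)$ extends by Hahn--Banach to a norm-one functional on $E$ attaining its norm at $e$ (hence factoring through $P_2(e)$), and states separate points of $E_2(e)$, this yields $P_2(e)(x)=0$.

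The genuine gap is the Peirce-$1$ step. None of your sketches is a proof: the $2\times 2$ reduction and the finite-dimensional truncation are not justified in a general JB$^*$-triple, and the implicit move from $\|e\pm x\|\le 1$ to $\|e\pm P_1(e)(x)\|\le 1$ fails because $P_2(e)+P_1(e)=I-P_0(e)$ is \emph{not} contractive in general. The clean fix is already in the paper's preliminaries. Once $P_2(e)(x)=0$ you have $P_2(e)(e+x)=e$ with $\|e+x\|=1$, i.e.\ $e\le_T e+x$ in the paper's notation; and the paper records (in the paragraph introducing bounded tripotents, just before Theorem~\ref{th FerPer compact trip}) that $P_2(e)(a)=e$ with $\|a\|=1$ forces $P_1(e)(a)=0$. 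Applying this with $a=e+x$ gives $P_1(e)(x)=0$ at once, and then $x\in E_0(e)$, i.e.\ $x\perp e$. So your strategy can be completed, but not by the arguments you proposed for the $E_1(e)$ component.
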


\begin{proof} The identity $(6)$ in \cite{FerMarPe2012} (see also \cite[Corollary 4.3]{EdHu}) shows that $$\{e\}^{\perp}\cap E_1 = \hbox{\rm cp} (\{e\}),$$ where $\{e\}^{\perp}= \{x\in E : e\perp x\}$. An element $x$ in the hypothesis of the Lemma belongs to $\{e\}^{\perp}\cap E_1$ and hence $x\perp e$. \end{proof}

\begin{lemma}\label{l two finite rank tripotents at distance 2} Let $e$ and $w$ be finite rank tripotents in a weakly compact JB$^*$-triple $E$ not containing direct summands of rank smaller or equal than 3. Suppose that $e$ is minimal and $\|e-w\| = 2$. Then $w= -e + P_0(e) (w)$.
\end{lemma}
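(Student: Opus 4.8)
The plan is to pin down the Peirce decomposition of $w$ relative to the minimal tripotent $e$; concretely, I will show $P_2(e)(w)=-e$ and $P_1(e)(w)=0$, and then $w=P_2(e)(w)+P_1(e)(w)+P_0(e)(w)=-e+P_0(e)(w)$ follows at once. First note that $e$ and $w$, being non-zero tripotents, have norm $1$, so $\|e-w\|\leq 2$ always, and the hypothesis says precisely that $a:=\frac{1}{2}(e-w)$ is a norm-one element of $E$. Since $-w$ is again a non-zero tripotent, $-w\in S(E)$, and $a=\frac{1}{2}e+\frac{1}{2}(-w)$ exhibits $a$ as a convex combination of the two sphere elements $e$ and $-w$.

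The idea is to trap both $e$ and $-w$ inside a single norm closed face of $E_1$. Let $s(a)\in E^{**}$ be the support tripotent of $a$; as $\|a\|=1$ we have $s(a)\neq 0$, and since $a=s(a)+P_0(s(a))(a)$ with $\|P_0(s(a))(a)\|\leq 1$, the element $a$ lies in the set $F:=(s(a)+E_0^{**}(s(a))_1)\cap E$. Here $s(a)$ is a compact tripotent (it is compact-$G_\delta$, witnessed by $a$ itself), so by Theorem \ref{thm norm closed faces} this $F$ is a norm closed face of $E_1$. Being a face and containing the convex combination $a=\frac{1}{2}e+\frac{1}{2}(-w)$ of $e,-w\in E_1$, the face $F$ must contain both $e$ and $-w$. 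From $e\in s(a)+E_0^{**}(s(a))_1$ we obtain, applying $P_2(s(a))$, that $P_2(s(a))(e)=s(a)$, that is, $s(a)\leq_{T} e$, and hence $s(a)\leq e$ by the consistency of $\leq_{T}$ with the partial order on tripotents. Since $e$ is minimal and $s(a)\neq 0$, this forces $s(a)=e$, so that $F=(e+E_0^{**}(e)_1)\cap E$.

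It then remains only to use that $-w\in F$ as well: writing $-w=e+z$ with $z\in E_0^{**}(e)_1$ and reading off the Peirce components relative to $e$ from this identity, one gets $P_2(e)(-w)=e$, $P_1(e)(-w)=0$ and $P_0(e)(-w)=z\in E_0^{**}(e)\cap E=E_0(e)$. Equivalently, $P_2(e)(w)=-e$ and $P_1(e)(w)=0$, and therefore $w=-e+P_0(e)(w)$, as claimed. (Incidentally, this argument uses only that $e$ is minimal and that $\|e-w\|=2$.)

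The only substantive point is the appeal to Theorem \ref{thm norm closed faces}: that $(s(a)+E_0^{**}(s(a))_1)\cap E$ is genuinely a norm closed face, so that the face property applies to the decomposition $a=\frac{1}{2}e+\frac{1}{2}(-w)$, together with the implication $s(a)\leq_{T}e\Rightarrow s(a)\leq e$ and the minimality of $e$ to identify $s(a)$ with $e$. Everything after that is routine Peirce-decomposition bookkeeping, and I do not anticipate any further obstacle.
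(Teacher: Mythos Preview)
Your argument is correct and takes a genuinely different route from the paper's proof. The paper proceeds concretely: it uses the structure theorem for weakly compact JB$^*$-triples (Theorem \ref{thm compact triples BuncChu}) to reduce to a single elementary JB$^*$-triple of type $K_1$, $K_2$ or $K_3$ and rank $\geq 4$, then writes $e=\xi\otimes\eta$, $w=\sum_j \zeta_j\otimes\vartheta_j$ explicitly in $K(H_1,H_2)$, picks a norm-one vector $h$ with $\|(e-w)(h)\|=2$, and computes directly that one of the $\zeta_{j}\otimes\vartheta_{j}$ must equal $-e$. By contrast, you argue abstractly via the facial structure of $E_1$: the midpoint $a=\tfrac12(e-w)$ has norm one, its support tripotent $s(a)$ determines a norm closed face $F$ containing $a$ (Theorem \ref{thm norm closed faces}), the face property forces $e,-w\in F$, and then $s(a)\leq_T e$ together with the minimality of $e$ pins down $s(a)=e$, so $-w\in e+E_0^{**}(e)_1$.

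What your approach buys is generality and economy: you never invoke the representation of $E$ as a $c_0$-sum of elementary triples, you avoid the case split over Cartan types, and---as you observe---neither the weak compactness of $E$ nor the exclusion of low-rank summands is actually used (the step $s(a)\leq e\Rightarrow s(a)=e$ only needs that a minimal tripotent of $E$ stays minimal in $E^{**}$, which holds in any JB$^*$-triple since $E_2(e)=\mathbb{C}e$ is weak$^*$-dense in $E^{**}_2(e)$ and already weak$^*$-closed). In fact your argument does not even require $w$ to be a tripotent: any $w\in E_1$ with $\|e-w\|=2$ will do. The paper's computational proof, on the other hand, is self-contained in the sense that it does not rely on the rather deep facial description of Theorem \ref{thm norm closed faces}; but since that theorem is used throughout the rest of the paper anyway, this is not a real saving.
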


\begin{proof} By the structure theory of weakly compact JB$^*$-triples (see Theorem \ref{thm compact triples BuncChu}), there is no loss of generality in assuming that $E$ is an elementary JB$^*$-triple of rank bigger or equal than 4.\smallskip

We assume first that $E=K(H_1,H_2)$, where $H_1$ and $H_2$ are complex Hilbert spaces (i.e. $E$ is an elementary JB$^*$-triple of type $K_1$). It is well known that in this case, $e = \xi \otimes \eta$ and  $w = \sum_{j=1}^{m} \zeta_j \otimes \vartheta_j$, where $\{\zeta_1,\ldots,\zeta_{m}\}$ and $\{\vartheta_1,\ldots,\vartheta_{m}\}$ are orthonormal systems in $H_1$ and $H_2$, respectively. The element $w-e$ is a finite rank operator with $\|e-w\|=2$. Then there exists a norm one element $h\in H_1$ such that $\| (e-w) (h)\| =2$. If $|\langle h, \eta \rangle|<1$ then $$2 = \| (e-w) (h)\| \leq \|w(h)\| + \left\| \langle h, \eta \rangle \xi \right\| < 2,$$ which is impossible. So, $|\langle h, \eta \rangle| = 1$, and hence $h = \langle h, \eta \rangle \eta$. Similarly, if $|\langle h, \vartheta_j \rangle|<1$, for every $1\leq j \leq m$, we would have $2=\| (e-w) (h)\| \leq \| e (h)\| + \| w (h)\| < 2$, which is impossible. Therefore there exists $j_1\in \{1,\ldots, m \}$ such that $|\langle h, \vartheta_{j_1} \rangle|=1$ and $h = \langle h, \vartheta_{j_1} \rangle \vartheta_{j_1}$. In particular $\eta =\lambda \vartheta_{j_1}$ for a suitable $\lambda$ in $\mathbb{C}$ with $|\lambda |=1$.\smallskip

Since $\|e^*-w^*\| = 2$ it follows from the above arguments the existence of $j_2\in \{1,\ldots, m \}$ such that $\xi =\mu \zeta_{j_2}$ for a suitable $\mu$ in $\mathbb{C}$ with $|\mu |=1$. Therefore $e = \overline{\lambda} \mu \zeta_{j_2}\otimes \vartheta_{j_1}$, and the condition $$2 =\| (e-w) (h)\| = \|\overline{\lambda} \mu \langle h, \vartheta_{j_1} \rangle \zeta_{j_2} - \langle h, \vartheta_{j_1} \rangle \zeta_{j_1}\|$$ implies that $j_1=j_2$, $\overline{\lambda} \mu=1$, and hence $w =- e +P_0(e) (w)$. The cases in which $E$ is of type $K_2$ or $K_3$ follow by similar arguments.
\end{proof}

Since every compact C$^*$-algebra can be written as a $c_0$ sum of C$^*$-algebras of the form $K(H_i)$, where each $H_i$ is a complex Hilbert space, the proof of the previous lemma actually shows the following statement.

\begin{lemma}\label{l two finite rank tripotents at distance 2 in KH} Let $e$ and $w$ be finite rank tripotents in a compact C$^*$-algebra $A$. Suppose that $e$ is minimal and $\|e-w\| = 2$. Then $w= -e + P_0(e) (w)$. $\hfill\Box$
\end{lemma}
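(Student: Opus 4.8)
The plan is to reduce the statement to the single-summand case $A=K(H)$ and then quote verbatim the type $K_1$ portion of the proof of Lemma~\ref{l two finite rank tripotents at distance 2}, observing that the rank hypothesis made there plays no role in that particular case. So first I would write $A$ as a $c_0$-sum $\bigoplus_{i\in\Lambda} K(H_i)$ (this is the structure of compact C$^*$-algebras already invoked in Theorem~\ref{thm compact triples BuncChu}) and denote by $e=(e_i)_i$ and $w=(w_i)_i$ the corresponding decompositions, where each $e_i,w_i$ is a finite rank partial isometry in $K(H_i)$, possibly zero. Since $e$ is a minimal tripotent of $A$, the Peirce-two subspace $A_2(e)=\mathbb{C}e$ is one-dimensional; as $A_2(e)=\bigoplus_i K(H_i)_2(e_i)$, there is a unique index $i_0$ with $e_{i_0}\neq 0$, and $e_{i_0}$ is a minimal tripotent of $K(H_{i_0})$. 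Moreover $w$ has finite rank, so $w_i=0$ for all but finitely many $i$, and for every $i\neq i_0$ we have $\|e_i-w_i\|=\|w_i\|\leq 1$; hence the equality $2=\|e-w\|=\sup_i\|e_i-w_i\|$ forces $\|e_{i_0}-w_{i_0}\|=2$.

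Next I would apply to the pair $e_{i_0},w_{i_0}$ inside $K(H_{i_0})=K(H_{i_0},H_{i_0})$ the argument given in the proof of Lemma~\ref{l two finite rank tripotents at distance 2} for an elementary JB$^*$-triple of type $K_1$. That argument only uses that $e_{i_0}=\xi\otimes\eta$ is a rank-one partial isometry and that $w_{i_0}=\sum_{j=1}^{m}\zeta_j\otimes\vartheta_j$ is a finite rank partial isometry with $\|e_{i_0}-w_{i_0}\|=2$; it nowhere invokes the rank-$\geq 4$ assumption. It therefore yields $w_{i_0}=-e_{i_0}+P_0(e_{i_0})(w_{i_0})$, where $P_0(e_{i_0})$ denotes the Peirce-zero projection of $K(H_{i_0})$ relative to $e_{i_0}$.

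Finally I would reassemble the summands. The Peirce decomposition of $A$ with respect to $e$ satisfies $A_0(e)=K(H_{i_0})_0(e_{i_0})\oplus\bigoplus_{i\neq i_0}K(H_i)$, since the summands with $i\neq i_0$ are orthogonal to $e$. Hence $P_0(e)(w)=P_0(e_{i_0})(w_{i_0})+\sum_{i\neq i_0}w_i$, and combining this with the previous paragraph gives
\[
-e+P_0(e)(w)=\bigl(-e_{i_0}+P_0(e_{i_0})(w_{i_0})\bigr)+\sum_{i\neq i_0}w_i=w_{i_0}+\sum_{i\neq i_0}w_i=w,
\]
as required. I do not expect any genuine obstacle here: the only points needing a line of care are the compatibility of the Peirce projections with $c_0$-sums and the (harmless) observation that the type $K_1$ part of the earlier proof is independent of the rank assumption on $E$.
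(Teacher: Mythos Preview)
Your proposal is correct and follows exactly the approach indicated by the paper: the paper simply remarks that since a compact C$^*$-algebra is a $c_0$-sum of spaces $K(H_i)$, the type $K_1$ computation in the proof of Lemma~\ref{l two finite rank tripotents at distance 2} (which, as you observe, nowhere uses the rank $\geq 4$ hypothesis) already establishes the present lemma. Your write-up just makes the reduction to a single summand and the reassembly via the componentwise Peirce projections more explicit than the paper does.
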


The following result is a generalization of Tingley's theorem in the setting of weakly compact JB$^*$-triples.

\begin{theorem}\label{t Tingley antipodes for finite rank} Let $f: S(E) \to S(B)$ be a surjective isometry between the unit spheres of two weakly compact JB$^*$-triples. We assume that $E$ and $B$ do not contain direct summands of rank smaller or equal than 3. Suppose $e$ is a finite rank tripotent in $E$. Then $f(-e)=-f(e)$. Furthermore, if $e_1,\ldots,e_m$ are mutually orthogonal minimal tripotents in $E$, then $$ f(e_1+\ldots+e_m) = f(e_1)+\ldots+f(e_m).$$
\end{theorem}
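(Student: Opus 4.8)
The plan is to prove the two assertions in turn, building the second on the first. For the antipodal statement $f(-e)=-f(e)$, the natural strategy is a metric characterization of the antipode: for a finite rank tripotent $e$, the point $-e$ should be detected inside $S(E)$ as the unique point at distance $2$ from $e$ that also lies in the (norm closed) face determined by any minimal subtripotent on the ``opposite side'', or more robustly, by combining a distance-$2$ condition with a facial condition. Concretely, fix a minimal tripotent $e_1\le e$ (if $e$ itself is minimal take $e_1=e$). The point $-e$ satisfies $\|e-(-e)\| = 2$ and $\|e_1 \pm 0\|\le 1$, etc.; but the cleaner route is: I would show that $-e$ is characterized among points $x\in S(E)$ by the property that $x$ lies in \emph{every} norm closed proper face $F$ of $E_1$ of the form $\{w\}_{\prime\prime}$ with $w$ a minimal tripotent orthogonal to $e$, together with $\|x - e\| = 2$. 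Using Lemma~\ref{l two finite rank tripotents at distance 2} (in the $e$-minimal case) and its iteration for higher rank $e$ (writing $e = e_1 + \dots + e_k$ and applying the minimal case to each Peirce block), the condition $\|x-e\|=2$ forces $x$ to have the form $-e + P_0(e)(x)$; then the facial membership conditions, which are preserved by $f$ via Proposition~\ref{p surjective isometries between the spheres preserve norm closed faces}(a),(d) and Proposition~\ref{p surjective isometries between the spheres preserve finite rank tripotents}, pin the $P_0(e)$-component down to $0$. Since $f$ is a surjective isometry it preserves the distance-$2$ condition, and since it preserves the relevant faces (Proposition~\ref{p surjective isometries between the spheres preserve finite rank tripotents}(a)) it preserves the facial conditions; hence $f(-e)$ satisfies the characterizing properties of $-f(e)$ relative to the finite rank tripotent $u = f(e)$ in $B$, giving $f(-e) = -f(e)$.

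For the additivity statement on mutually orthogonal minimal tripotents $e_1,\dots,e_m$, I would proceed by induction on $m$, the base case $m=1$ being trivial. Set $e = e_1 + \dots + e_m$, a finite rank tripotent, and $e' = e_1 + \dots + e_{m-1}$. By Proposition~\ref{p surjective isometries between the spheres preserve finite rank tripotents}(d) we have $f(e_i) = u_i$ for uniquely determined minimal tripotents $u_i\in B$, and $f(e) = u$, $f(e') = u''$ for finite rank tripotents $u, u''$. The goal is to show $u = u_1 + \dots + u_m$. The key observation is that $e \in \{e'\}_{\prime\prime} = (e' + E_0^{**}(e')_1)\cap E$: indeed $e = e' + e_m$ with $e_m$ a norm-one element of $E_0(e')$ (as $e_m \perp e_i$ for $i<m$). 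By Proposition~\ref{p surjective isometries between the spheres preserve finite rank tripotents}(c), $f$ restricted to $\{e'\}_{\prime\prime}$ has the form $f(e' + x) = u'' + T_{e'}(x)$ for a surjective real linear isometry $T_{e'}: E_0(e') \to B_0(u'')$; hence $f(e) = u'' + T_{e'}(e_m)$. By the inductive hypothesis $u'' = u_1 + \dots + u_{m-1}$, so it remains to identify $T_{e'}(e_m) = u_m = f(e_m)$. Here I would use that $T_{e'}$ sends the tripotent $e_m$ (minimal in $E_0(e')$, hence minimal in $E$) to a norm-one element of $B_0(u'')$; the facial structure should force $T_{e'}(e_m)$ to be the minimal tripotent associated, via Proposition~\ref{p surjective isometries between the spheres preserve norm closed faces}(a), to the face of $(E_0(e'))_1$ corresponding to $e_m$ — and this is precisely the face whose image under $f$ is the face of $B_1$ corresponding to $f(e_m) = u_m$.

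The main obstacle, I expect, is the last identification step in the additivity argument — showing $T_{e'}(e_m) = f(e_m)$ — which requires compatibility between the linear map $T_{e'}$ produced by Mankiewicz's theorem on the slice $\{e'\}_{\prime\prime}$ and the a priori unrelated behaviour of $f$ on the face $\{e_m\}_{\prime\prime}$ of $E_1$. The resolution should go through a \emph{relative} version of Proposition~\ref{p surjective isometries between the spheres preserve norm closed faces}(a): the face of $E_0(e')_1$ exposed by the minimal tripotent $e_m$ of the weakly compact JB$^*$-triple $E_0(e')$ is $(e_m + E_0^{**}(e_m)_1 \cap E_0(e'))$, and $T_{e'}$, being a surjective triple-compatible real linear isometry between weakly compact JB$^*$-triples (it preserves cubes, hence maps minimal tripotents to scalar multiples of minimal tripotents), must carry this face to the corresponding face in $B_0(u'')_1$; identifying the target minimal tripotent with $u_m$ then uses the already-established antipodal/facial bookkeeping and the uniqueness clause in Proposition~\ref{p surjective isometries between the spheres preserve finite rank tripotents}(d). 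A secondary technical point is the iteration of Lemma~\ref{l two finite rank tripotents at distance 2} for non-minimal $e$ in the antipodal step, which requires decomposing along the Peirce-$2$ space of $e$ and checking that a distance-$2$ witness vector localizes to a single rank-one block; this is a direct computation in the Cartan factor models, parallel to the proof of Lemma~\ref{l two finite rank tripotents at distance 2}.
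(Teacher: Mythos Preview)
Your proposal has two genuine gaps that the paper's argument avoids by a quite different route.

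First, the antipodal characterization you sketch does not work as stated: if $w$ is a minimal tripotent orthogonal to $e$, then $-e\in E_0(w)$ and hence $P_2(w)(-e)=0\neq w$, so $-e\notin\{w\}_{\prime\prime}$. Thus $-e$ does not satisfy the facial condition you propose, and the characterization collapses. Moreover, the ``iteration'' of Lemma~\ref{l two finite rank tripotents at distance 2} you invoke is false in the form you need: if $e=e_1+\ldots+e_k$ and $x=-e_1+e_2+\ldots+e_k$, then $\|x-e\|=2$ but $x\neq -e+P_0(e)(x)$. The paper instead proves the antipodal statement \emph{only for minimal} $e_1$ at this stage: since both $f(e_1)$ and $f(-e_1)$ are minimal (Proposition~\ref{p surjective isometries between the spheres preserve norm closed faces}) and at distance $2$, Lemma~\ref{l two finite rank tripotents at distance 2} gives $f(-e_1)=-f(e_1)+P_0(f(e_1))(f(-e_1))$, and minimality of $f(-e_1)$ kills the $P_0$ term immediately. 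The general antipodal statement is then a \emph{consequence} of the additivity, not a precursor to it.

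Second, and more seriously, your inductive approach to additivity hinges on the identification $T_{e'}(e_m)=f(e_m)$, which you correctly flag as the obstacle. Your proposed resolution via cube-preservation of $T_{e'}$ only tells you that $T_{e'}(e_m)$ is \emph{some} minimal tripotent in $B_0(u'')$; it does not connect this to the action of $f$ on the face $\{e_m\}_{\prime\prime}$, which lives in a different part of $S(E)$. In fact the paper establishes precisely this compatibility later (Corollary~\ref{cor Te coincides with f on the orthogonal of e}), but that result \emph{depends} on the present theorem, so invoking it here would be circular. The paper's argument sidesteps this entirely with two ideas you are missing: (i) having $f(-e_j)=-f(e_j)$ for minimal $e_j$, one gets $\|f(e_j)\pm f(e_k)\|=\|f(e_j)-f(\mp e_k)\|=\|e_j\pm e_k\|=1$, whence Lemma~\ref{l two finite rank tripotents at distance 1 are orthogonal} gives $f(e_j)\perp f(e_k)$; (ii) setting $w=f(-e_1-\ldots-e_m)$ and applying Lemma~\ref{l two finite rank tripotents at distance 2} to each minimal $v_j=f(e_j)$ (so that $\|w-v_j\|=2$) yields $w=-v_j+P_0(v_j)(w)$ for every $j$, and mutual orthogonality of the $v_j$ then gives $w=-\sum_j v_j + P_0(\sum_j v_j)(w)$. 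A rank comparison using $f^{-1}$ forces the $P_0$ term to vanish. No Mankiewicz-type linear map is needed at this stage.
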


\begin{proof} We shall first prove that \begin{equation}\label{eq f preserves antipodes minimal trip} f(-e_1) = -f(e_1), \hbox{ for every minimal tripotent } e_1\in E.
\end{equation} Indeed, by Proposition \ref{p surjective isometries between the spheres preserve norm closed faces} $v_1 = f(e_1)$ and $w_1 = f(-e_1)$ are minimal tripotents in $B$. By the assumptions on $f$ we have $$ \| v_1 - w_1 \| = \| f (e_1) - f (-e_1)\|= \| e_1  + e_1\| =2,$$ which, via Lemma \ref{l two finite rank tripotents at distance 2}, proves that $w_1 = -v_1 + P_0(v_1) (w_1)$. However, $w_1$ being minimal implies that $w_1=- v_1$, which proves \eqref{eq f preserves antipodes minimal trip}.\smallskip

Let $e_1,\ldots,e_m$ be mutually orthogonal minimal tripotents in $E$. Proposition \ref{p surjective isometries between the spheres preserve norm closed faces} assures that $v_j =f(e_j)$ is a minimal tripotent for every $1\leq j\leq m$. We also know from \eqref{eq f preserves antipodes minimal trip} that $f(-e_j) = -f(e_j)$, for every such $j$.\smallskip

We claim that \begin{equation}\label{eq vj are mutually orthogonal} v_j \perp v_k, \hbox{ for every } j\neq k.
\end{equation} To see this, for each $j\neq k$, we observe that $$\| v_j \pm v_k \| = \| f(e_j) \pm f(e_k) \| = \| f(e_j) -  f(\pm e_k) \| = \| e_j \pm e_k\| =1,$$ and hence Lemma \ref{l two finite rank tripotents at distance 1 are orthogonal} implies that $v_j \perp v_k$.\smallskip

Let $w=f(-e_1-\ldots-e_m)$ and $v=f(e_1+\ldots+e_m)$. We shall show that $w=- v$. Proposition \ref{p surjective isometries between the spheres preserve finite rank tripotents} implies that $w$ and $v$ are finite rank tripotents. Fix $1\leq j\leq m$. It follows from the hypothesis that $$\|w-v_j \| =\| -e_1-\ldots-e_m - e_j\|=2.$$ An application of Lemma \ref{l two finite rank tripotents at distance 2} shows that $w = -v_j + P_0(v_j) (w)$, for every $1\leq j\leq m$. Having in mind that, by \eqref{eq vj are mutually orthogonal}, $v_1,\ldots, v_m$ are mutually orthogonal, we can easily deduce that $$f(-e_1-\ldots-e_m)=w = -v_1 -\ldots - v_m + P_0(v_1+\ldots+ v_m) (w) $$ $$= -v + P_0(v_1+\ldots+ v_m) (w) = -f(e_1+\ldots+e_m) + P_0(v_1+\ldots+ v_m) (w).$$ This shows that $f(-e_1-\ldots-e_m) = w$ has rank greater or equal than $m$. If $w$ had rank strictly bigger than $m$, it would follow from the above arguments applied to $f^{-1}$ that $f^{-1} (w) =-e_1-\ldots-e_m $ had rank strictly bigger than $m$, which is impossible. Therefore $P_0(v_1+\ldots+ v_m) (w)=0$ and hence $$f(-e_1-\ldots-e_m) = -f(e_1)-\ldots-f(e_m) .$$ The above arguments also show that $f(e_1+\ldots+e_m) = f(e_1)+\ldots+f(e_m),$ and thus $f(-e_1-\ldots-e_m) = -f(e_1+\ldots +e_m) .$
\end{proof}

When in the proof of Theorem \ref{t Tingley antipodes for finite rank}, Lemma \ref{l two finite rank tripotents at distance 2} is replaced with Lemma \ref{l two finite rank tripotents at distance 2 in KH} we obtain the following result.

\begin{theorem}\label{t Tingley antipodes for finite rank compact C*-algebras} Let $f: S(A) \to S(B)$ be a surjective isometry between the unit spheres of two compact C$^*$-algebras. Suppose $e$ is a finite rank tripotent in $E$. Then $f(-e)=-f(e)$. Furthermore, if $e_1,\ldots,e_m$ are mutually orthogonal minimal tripotents in $E$, then $$ f(e_1+\ldots+e_m) = f(e_1)+\ldots+f(e_m).$$ $\hfill\Box$
\end{theorem}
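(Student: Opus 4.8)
The plan is to repeat, essentially word for word, the proof of Theorem \ref{t Tingley antipodes for finite rank}, with a single substitution: every invocation of Lemma \ref{l two finite rank tripotents at distance 2} --- which is where the hypothesis ``no direct summand of rank $\leq 3$'' entered --- is replaced by an invocation of Lemma \ref{l two finite rank tripotents at distance 2 in KH}, which is valid in an arbitrary compact C$^*$-algebra with no constraint on the rank. This is exactly why no rank hypothesis survives in the present statement. To set things up I would first note that a compact C$^*$-algebra is a $c_0$-sum of algebras of the form $K(H_i)$, hence a weakly compact JB$^*$-triple, so that Propositions \ref{p surjective isometries between the spheres preserve norm closed faces} and \ref{p surjective isometries between the spheres preserve finite rank tripotents} apply; in particular $f$ maps minimal tripotents of $A$ to minimal tripotents of $B$ and finite rank tripotents of $A$ to finite rank tripotents of $B$.

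The first step is the case of a minimal tripotent $e_1$ in $A$: setting $v_1 = f(e_1)$ and $w_1 = f(-e_1)$, which are minimal tripotents in $B$, from $\|v_1 - w_1\| = \|e_1 + e_1\| = 2$ and Lemma \ref{l two finite rank tripotents at distance 2 in KH} we get $w_1 = -v_1 + P_0(v_1)(w_1)$, and minimality of $w_1$ kills the Peirce-$0$ term, so $f(-e_1) = -f(e_1)$. Next, for mutually orthogonal minimal tripotents $e_1, \ldots, e_m$ in $A$, put $v_j = f(e_j)$; these are minimal, $f(-e_j) = -v_j$, and for $j \neq k$ the isometry property together with $f(-e_k) = -v_k$ gives $\|v_j + v_k\| = \|e_j + e_k\| = 1 = \|e_j - e_k\| = \|v_j - v_k\|$, so Lemma \ref{l two finite rank tripotents at distance 1 are orthogonal} forces $v_j \perp v_k$.

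Now let $w = f(-e_1 - \ldots - e_m)$ and $v = f(e_1 + \ldots + e_m)$, both finite rank tripotents in $B$ by Proposition \ref{p surjective isometries between the spheres preserve finite rank tripotents}. Since $e_j \perp \sum_{k \neq j} e_k$ we have $\|w - v_j\| = \|-e_1 - \ldots - e_m - e_j\| = 2$, and Lemma \ref{l two finite rank tripotents at distance 2 in KH} gives $w = -v_j + P_0(v_j)(w)$ for every $j$. Feeding in the mutual orthogonality of $v_1, \ldots, v_m$ from the previous step, one assembles these identities into $w = -(v_1 + \ldots + v_m) + P_0(v_1 + \ldots + v_m)(w)$, where the final summand is a finite rank tripotent orthogonal to $v_1 + \ldots + v_m$; thus $w$ has rank at least $m$. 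Applying the same argument to $f^{-1}$, and using that $f^{-1}(w) = -e_1 - \ldots - e_m$ has rank at most $m$, we conclude that $w$ has rank exactly $m$, hence $P_0(v_1 + \ldots + v_m)(w) = 0$ and $f(-e_1 - \ldots - e_m) = -(f(e_1) + \ldots + f(e_m))$. The same computation with all signs removed gives $f(e_1 + \ldots + e_m) = f(e_1) + \ldots + f(e_m)$, and the special case in which $e$ is a finite rank tripotent written as a sum of mutually orthogonal minimal tripotents yields $f(-e) = -f(e)$.

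I do not foresee a genuine obstacle: the whole point is that Lemma \ref{l two finite rank tripotents at distance 2 in KH} is a legitimate drop-in replacement for Lemma \ref{l two finite rank tripotents at distance 2} in the compact C$^*$-algebra case, and the only detail deserving care is the rank bookkeeping at the end --- verifying that $P_0(v_1 + \ldots + v_m)(w)$ is genuinely a finite rank tripotent orthogonal to each $v_j$, so that it adds at least $1$ to the rank of $w$ when nonzero. All remaining ingredients are imported unchanged from the proof of Theorem \ref{t Tingley antipodes for finite rank}.
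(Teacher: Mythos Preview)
Your proposal is correct and matches the paper's own approach exactly: the paper states that the result follows by repeating the proof of Theorem~\ref{t Tingley antipodes for finite rank} verbatim, replacing each appeal to Lemma~\ref{l two finite rank tripotents at distance 2} by Lemma~\ref{l two finite rank tripotents at distance 2 in KH}. Your write-up reproduces this substitution faithfully, including the rank-bookkeeping step at the end.
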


The next result is a direct consequence of Theorems \ref{t Tingley antipodes for finite rank} and \ref{t Tingley antipodes for finite rank compact C*-algebras}.

\begin{corollary}\label{c rank preservation bewteen elementary} Let $f: S(E) \to S(B)$ be a surjective isometry between the unit spheres of two weakly compact JB$^*$-triples. We assume that $E$ and $B$ do not contain direct summands of rank smaller or equal than 3. Then the following statements hold:
\begin{enumerate}[$(a)$]\item If $e_1$ and $e_2$ are two orthogonal finite rank tripotents in $E$. Then $f(e_1)$ and $f(e_2)$ are two orthogonal finite rank tripotents in $B$ with $f(e_1 + e_2 ) = f(e_1) + f(e_2)$.
\item If $e$ is a rank $k$ tripotent in $E$ then $f(e)$ is a rank $k$ tripotent in $B$.
\item The rank of $E$ and the rank of $B$ coincide.
\end{enumerate}
Furthermore, the same statements hold when $E$ and $B$ are compact C$^*$-algebras.
\end{corollary}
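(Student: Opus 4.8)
The plan is to deduce the three statements from the antipode-preservation results already established, namely Theorem \ref{t Tingley antipodes for finite rank} in the weakly compact JB$^*$-triple case and Theorem \ref{t Tingley antipodes for finite rank compact C*-algebras} in the compact C$^*$-algebra case, together with Propositions \ref{p surjective isometries between the spheres preserve norm closed faces} and \ref{p surjective isometries between the spheres preserve finite rank tripotents} and Lemma \ref{l two finite rank tripotents at distance 1 are orthogonal}. I would carry out the argument for two weakly compact JB$^*$-triples $E$, $B$ having no direct summand of rank $\le 3$; since Theorem \ref{t Tingley antipodes for finite rank compact C*-algebras} and Lemma \ref{l two finite rank tripotents at distance 2 in KH} hold with no restriction on the rank, and since Propositions \ref{p surjective isometries between the spheres preserve norm closed faces} and \ref{p surjective isometries between the spheres preserve finite rank tripotents} require none either, the very same lines, with Theorem \ref{t Tingley antipodes for finite rank compact C*-algebras} replacing Theorem \ref{t Tingley antipodes for finite rank}, will establish the final assertion for compact C$^*$-algebras.

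For $(a)$, I would fix decompositions $e_1=\sum_{i=1}^{p}a_i$ and $e_2=\sum_{j=1}^{q}b_j$ into mutually orthogonal minimal tripotents. From $e_1\perp e_2$ and the standard interaction of the order $\le$ with orthogonality (for instance $E_0(u+v)=E_0(u)\cap E_0(v)$ for orthogonal tripotents $u,v$, applied twice), one gets $a_i\perp b_j$ for all $i,j$, so $\{a_1,\dots,a_p,b_1,\dots,b_q\}$ is a family of $p+q$ mutually orthogonal minimal tripotents whose sum is $e_1+e_2$. Applying Theorem \ref{t Tingley antipodes for finite rank} to this family and to each of its two halves gives $f(e_1)=\sum_i f(a_i)$, $f(e_2)=\sum_j f(b_j)$ and $f(e_1+e_2)=\sum_i f(a_i)+\sum_j f(b_j)=f(e_1)+f(e_2)$. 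For the orthogonality $f(e_1)\perp f(e_2)$ I would reproduce the argument inside the proof of Theorem \ref{t Tingley antipodes for finite rank}: $f(e_1)$ and $f(e_2)$ are finite rank tripotents by Proposition \ref{p surjective isometries between the spheres preserve finite rank tripotents}$(d)$, and since $f(-e_2)=-f(e_2)$ by Theorem \ref{t Tingley antipodes for finite rank} we have $\|f(e_1)\pm f(e_2)\|=\|e_1\pm e_2\|=1$, whence Lemma \ref{l two finite rank tripotents at distance 1 are orthogonal} (with tripotent $f(e_1)$ and element $f(e_2)$) yields $f(e_1)\perp f(e_2)$; equivalently, this orthogonality is already recorded for families of minimal tripotents in \eqref{eq vj are mutually orthogonal}.

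Statement $(b)$ then follows at once: writing a rank $k$ tripotent $e$ as $e_1+\dots+e_k$ with $e_1,\dots,e_k$ mutually orthogonal minimal tripotents, the reasoning in $(a)$ shows that $f(e)=f(e_1)+\dots+f(e_k)$ is a sum of $k$ mutually orthogonal minimal tripotents, hence a rank $k$ tripotent. For $(c)$ I would use that $f^{-1}$ is again a surjective isometry satisfying the same hypotheses, so that $f$ restricts to a bijection from the minimal tripotents of $E$ onto those of $B$ (Proposition \ref{p surjective isometries between the spheres preserve norm closed faces}$(c)$ applied to $f$ and to $f^{-1}$) which, by the computation just made together with Lemma \ref{l two finite rank tripotents at distance 1 are orthogonal}, preserves orthogonality in both directions; being injective, $f$ therefore maps every orthogonal family of minimal tripotents of $E$ onto an orthogonal family of minimal tripotents of $B$ of the same cardinality, and conversely. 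It then remains to invoke the standard fact that, for a weakly compact JB$^*$-triple (equivalently, a $c_0$-sum of elementary JB$^*$-triples by Theorem \ref{thm compact triples BuncChu}), the rank equals the supremum of the cardinalities of its orthogonal families of minimal tripotents; combined with the previous sentence this gives that the ranks of $E$ and $B$ coincide.

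Given how much is already contained in the quoted theorems, I do not expect a genuine obstacle. The two points requiring some care are the elementary triple fact $a_i\perp b_j$ invoked in $(a)$, and---this being the only truly structural ingredient---the identification in $(c)$ of the rank of a weakly compact JB$^*$-triple with the supremum of the sizes of its orthogonal families of minimal tripotents, which rests on the spectral decomposition of elements of weakly compact JB$^*$-triples; it is to accommodate possibly infinite ranks that I would phrase the transfer in $(c)$ directly in terms of orthogonal families of arbitrary cardinality rather than routing it through item $(b)$ alone.
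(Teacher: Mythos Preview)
Your proposal is correct and matches the paper's approach: the paper gives no explicit proof, declaring the corollary a ``direct consequence'' of Theorems \ref{t Tingley antipodes for finite rank} and \ref{t Tingley antipodes for finite rank compact C*-algebras}, and your argument is precisely the natural unpacking of that claim, using the sum formula and antipode preservation from those theorems together with Lemma \ref{l two finite rank tripotents at distance 1 are orthogonal} for orthogonality. Your treatment of $(c)$ via orthogonal families of arbitrary cardinality and the identification of rank with the supremum of such families is slightly more careful than the paper's terse statement warrants, but it is the right way to make the claim rigorous when the rank may be infinite.
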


Our next proposition plays a fundamental role in our results.

\begin{proposition}\label{p identity principle} Let $f: S(E) \to S(B)$ be a surjective isometry between the unit spheres of two weakly compact JB$^*$-triples. Let $A$ be a JB$^*$-subtriple of $E$, and suppose that $T : A\to B$ is a bounded linear operator such that $T(u) =f(u)$ for every finite rank tripotent $u\in A$. Then $f(x) =T(x),$ for every $x\in S(A)$.
\end{proposition}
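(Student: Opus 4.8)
The plan is to combine the norm density, inside $S(A)$, of orthogonal linear combinations of finite rank tripotents with the explicit description of $f$ on norm closed faces of $E_1$ furnished by Proposition~\ref{p surjective isometries between the spheres preserve finite rank tripotents}$(c)$.

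First I would record two preliminary facts. Since $A$ is a JB$^*$-subtriple of the weakly compact $E$, it is itself weakly compact: for $a\in A$ the operator $Q(a)\colon E\to E$ is weakly compact and leaves the norm closed subspace $A$ invariant. Hence, by Theorem~\ref{thm compact triples BuncChu} (and the resulting spectral resolution in weakly compact JB$^*$-triples; see also \cite{BuChu}), every $x\in S(A)$ is the norm limit of its partial sums $x_N=\sum_{n=1}^{N}\lambda_n e_n$, where $x=\sum_n\lambda_n e_n$ with the $e_n$ mutually orthogonal minimal tripotents of $A$ and $1=\lambda_1\ge\lambda_2\ge\cdots>0$, $\lambda_n\to 0$; note $x_N\in S(A)$ and $\|x-x_N\|=\lambda_{N+1}\to 0$. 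Secondly, any tripotent $v$ of $A$ is a finite rank tripotent of $E$: since $v^{[2n-1]}=v$ one has $s(v)=v$, so $v$ is compact-$G_\delta$, hence compact, in $E^{**}$, hence a finite rank tripotent of $E$ by Corollary~\ref{c compact tripotents in the bidual of a weakly compact JBtriple}. In particular the hypothesis $T(v)=f(v)$ applies to every tripotent $v$ of $A$, and ``finite rank tripotent in $A$'' may be read relative to $A$ or to $E$ indifferently.

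Now fix $x\in S(A)$ with the above resolution and fix $N$. Let $j$ be the number of indices $n\le N$ with $\lambda_n=1$, and set $s:=e_1+\cdots+e_j$, a finite rank tripotent of $E$ lying in $A$. For $j<n\le N$ the tripotent $e_n$ is orthogonal to $s$, so it lies in $E_0(s)_1$; consequently $x_N-s=\sum_{j<n\le N}\lambda_n e_n\in E_0(s)_1$ and $x_N=s+(x_N-s)\in (s+E_0^{**}(s)_1)\cap E$. By Proposition~\ref{p surjective isometries between the spheres preserve finite rank tripotents}$(c)$ there exist a finite rank tripotent $u\in B$ and a surjective real linear isometry $T_s\colon E_0(s)\to B_0(u)$ with $f(s+z)=u+T_s(z)$ for all $z\in E_0(s)_1$; taking $z=0$ gives $u=f(s)=T(s)$. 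For each $n$ with $j<n\le N$ the element $s+e_n$ is a finite rank tripotent of $A$, so the hypothesis yields
\[ u+T_s(e_n)=f(s+e_n)=T(s+e_n)=T(s)+T(e_n)=u+f(e_n), \]
whence $T_s(e_n)=f(e_n)=T(e_n)$. As $T_s$ is real linear and the $\lambda_n$ are real, $T_s(x_N-s)=\sum_{j<n\le N}\lambda_n T_s(e_n)=\sum_{j<n\le N}\lambda_n T(e_n)=T(x_N-s)$, and therefore
\[ f(x_N)=u+T_s(x_N-s)=T(s)+T(x_N-s)=T(x_N). \]
Letting $N\to\infty$ and using that $f$ is an isometry on $S(E)$ while $T$ is bounded linear, $\|f(x)-T(x)\|\le\|f(x)-f(x_N)\|+\|T(x_N)-T(x)\|\le(1+\|T\|)\,\|x-x_N\|\to 0$, so $f(x)=T(x)$.

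The delicate point is the first displayed identity: $f$ is not linear, so it cannot be compared with $T$ directly on the linear combination $x_N$. The orthogonality of the spectral decomposition is what forces $x_N$ into a single norm closed face of $E_1$, on which Proposition~\ref{p surjective isometries between the spheres preserve finite rank tripotents}$(c)$ represents $f$ through a genuine (real) linear isometry $T_s$; the auxiliary finite rank tripotents $s+e_n$, to which the hypothesis does apply, then pin $T_s$ down on precisely the vectors occurring in $x_N-s$. Everything else is bookkeeping, once one is comfortable that (i) $A$ is weakly compact so that the spectral expansion into orthogonal minimal tripotents is available and norm convergent, and (ii) all the tripotents produced ($s$, $e_n$, $s+e_n$) are finite rank tripotents of both $A$ and $E$, which is immediate from Corollary~\ref{c compact tripotents in the bidual of a weakly compact JBtriple}.
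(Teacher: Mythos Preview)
Your proof is correct and reaches the same conclusion, but the mechanism at the key step differs from the paper's. Both arguments reduce to showing $f(z)=T(z)$ for a norm-one finite orthogonal combination $z=s+\sum_{n>j}\lambda_n e_n$ (with $s$ the ``top'' tripotent collecting the coefficients equal to $1$), and both use continuity to pass to general $x\in S(A)$.

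The paper handles the finite combination by invoking the Russo--Dye theorem in the commutative finite-dimensional C$^*$-algebra generated by $e_{j+1},\ldots,e_N$: it rewrites $\sum_{n>j}\lambda_n e_n$ as a mean of unitaries $w_i$, so that $z=\frac{1}{m_1}\sum_i(s+w_i)$ is a convex combination of finite rank tripotents lying in the face $\{s\}_{\prime\prime}$; then affineness of $f$ on faces (Proposition~\ref{p surjective isometries between the spheres preserve finite rank tripotents}$(b)$) plus the hypothesis on tripotents give $f(z)=T(z)$. You instead use part~$(c)$ of the same proposition: the explicit real-linear representation $f(s+\cdot)=f(s)+T_s(\cdot)$ on $E_0(s)_1$, and you pin down $T_s(e_n)=T(e_n)$ by applying the hypothesis to the auxiliary tripotents $s+e_n$. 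Since the $\lambda_n$ are real, real linearity of $T_s$ suffices. Your route avoids Russo--Dye entirely and is arguably more elementary; the paper's route, on the other hand, would still go through if the approximating coefficients were complex (the unitaries absorb the phases), which is irrelevant here but explains the choice.
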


\begin{proof} Let $x$ be an element in $S(A)$. Since every JB$^*$-subtriple of a weakly compact JB$^*$-triple is weakly compact (see \cite[Lemma 3.2]{BuChu}), it follows that $A$ is weakly compact. Therefore, we can assure that $x$ can be approximated in norm by an element of the form $\displaystyle z = \sum_{j=1}^{m} \lambda_j e_j$, where $e_1,\ldots,e_m$ are mutually orthogonal minimal tripotents in $A$ and $0<\lambda_j \leq \|x\|=1$ for every $j$ (see \cite[Remark 4.6]{BuChu}). The element $z$ can be chosen with the condition that $\|z\|=1$ (that is, $\lambda_j = 1$ for some $j\in \{1,\ldots, m\}$). By the norm-density of this type of elements and the continuity of $f$ and $T$, the desired conclusion follows as soon as we prove that $f(z) = T (z)$, for every $z$ as above.\smallskip

Let us write such an element $z$ in the form $$\displaystyle z = u_1+\ldots+u_{k} + \sum_{j=k+1}^{m} \lambda_j u_j = u+ \sum_{j=k+1}^{m} \lambda_j u_j,$$ where $u_1,\ldots, u_m$ are mutually orthogonal minimal tripotents in $A$ (and in $E$), $u= u_1+\ldots+ u_k$, and $0<\lambda_j<1$. Let $\mathcal{C}$ denote the JB$^*$-subtriple of $A$ generated by $\{u_{k+1},\ldots, u_m\}$. Clearly, $\mathcal{C}$ is JB$^*$-triple isomorphic to a unital C$^*$-algebra of dimension $m-k$ and $\displaystyle \sum_{j=k+1}^{m} \lambda_j u_j\in \mathcal{C}$ with $\displaystyle \left\|\sum_{j=k+1}^{m} \lambda_j u_j \right\| <1$. By the Russo-Dye theorem (see \cite{RuDye}), or by the strengthened version proved by Kadison and Pedersen in \cite{KadPed}, the element $\displaystyle \sum_{j=k+1}^{m} \lambda_j u_j$ is the mean of a finite number of unitary elements in $\mathcal{C}$, that is $$\sum_{j=k+1}^{m} \lambda_j u_j = \frac{1}{m_1} (w_1+\ldots+w_{m_1}),$$ where $w_1,\ldots,w_{m_1}$ are unitary elements in the C$^*$-algebra $\mathcal{C}$. It is not hard to see that, defining $\displaystyle \widetilde{w}_{j} = u + w_j,$ $j\in \{1,\ldots,m_1\}$, it follows that $\widetilde{w}_{1},\ldots,\widetilde{w}_{m_1}$ are tripotents in $A$ (and in $E$) and \begin{equation}\label{eq finite convex combination1} z = \frac{1}{m_1} (\widetilde{w}_1+\ldots+\widetilde{w}_{m_1}).
\end{equation} Clearly $z,\widetilde{w}_{1},\ldots,\widetilde{w}_{m_1}$ lie in a proper norm closed face of $E_1$, so applying Proposition \ref{p surjective isometries between the spheres preserve finite rank tripotents}$(b)$ and the hypothesis we get $$f(z) = \frac{1}{m_1} (f(\widetilde{w}_1) +\ldots+f(\widetilde{w}_{m_1})) = \frac{1}{m_1} (T(\widetilde{w}_1) +\ldots+T(\widetilde{w}_{m_1})) = T(z),$$ which concludes the proof.
\end{proof}

\begin{corollary}\label{cor Te coincides with f on the orthogonal of e} Let $f: S(E) \to S(B)$ be a surjective isometry between the unit spheres of two weakly compact JB$^*$-triples not containing direct summands of rank smaller or equal than 3. Let $e$ be a finite rank tripotent in $E$, $u= f(e)$, and let $T_e : E_0(e) \to B_0(u)$ be the surjective real linear isometry given by Proposition \ref{p surjective isometries between the spheres preserve finite rank tripotents}$(c)$. Then $f(x) = T_e (x)$ for all $x\in S(E_0(e))$. The conclusion also holds when $E$ and $B$ are compact C$^*$-algebras.
\end{corollary}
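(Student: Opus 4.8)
The plan is to invoke the identity principle, Proposition \ref{p identity principle}, with $A:=E_0(e)$ and $T:=T_e$. First I would note that $E_0(e)$ is a norm closed JB$^*$-subtriple of $E$, hence weakly compact by \cite[Lemma 3.2]{BuChu}, and that the map $T_e\colon E_0(e)\to B_0(u)\subseteq B$ supplied by Proposition \ref{p surjective isometries between the spheres preserve finite rank tripotents}$(c)$ is a bounded real linear operator into $B$. Although Proposition \ref{p identity principle} is stated for complex linear $T$, its proof uses only that $T$ is norm continuous and that it commutes with the finite convex combinations of finite rank tripotents appearing in \eqref{eq finite convex combination1}; consequently it applies verbatim with $T$ replaced by the (merely) real linear isometry $T_e$, and I would record this observation explicitly. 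Granting that, it suffices to check that $T_e(w)=f(w)$ for every finite rank tripotent $w$ in $E_0(e)$, and then Proposition \ref{p identity principle} immediately yields $f(x)=T_e(x)$ for all $x\in S(E_0(e))$.

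The verification of this hypothesis is short. Fix a finite rank tripotent $w\in E_0(e)$. Then $w\perp e$, so by \eqref{eq orthogonality via pm} the element $e+w$ is again a finite rank tripotent of $E$, and moreover $w\in E_0(e)_1$. Now I would evaluate $f(e+w)$ in two ways. On the one hand, Proposition \ref{p surjective isometries between the spheres preserve finite rank tripotents}$(c)$ applied to the tripotent $e$ (with $u=f(e)$) gives $f(e+w)=u+T_e(w)$, precisely because $w\in E_0(e)_1$. On the other hand, $e$ and $w$ are orthogonal finite rank tripotents, so Corollary \ref{c rank preservation bewteen elementary}$(a)$ gives $f(e+w)=f(e)+f(w)=u+f(w)$. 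Comparing the two expressions in $B$ and cancelling $u$ yields $T_e(w)=f(w)$, as required.

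The case of compact C$^*$-algebras is handled identically, replacing Corollary \ref{c rank preservation bewteen elementary} by its C$^*$-algebra version (which follows from Theorem \ref{t Tingley antipodes for finite rank compact C*-algebras}) and keeping Propositions \ref{p surjective isometries between the spheres preserve finite rank tripotents} and \ref{p identity principle}, both of which already cover all weakly compact JB$^*$-triples and hence compact C$^*$-algebras. I do not expect a real obstacle in this argument; the only point that requires a moment's care is the remark that Proposition \ref{p identity principle} is in substance a statement about continuous affine maps agreeing with $f$ on finite rank tripotents, so that it may legitimately be fed the real linear isometry $T_e$. Everything else is bookkeeping with Peirce decompositions and the orthogonality relation.
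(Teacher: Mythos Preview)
Your proof is correct and follows essentially the same route as the paper's: verify that $T_e$ agrees with $f$ on finite rank tripotents of $E_0(e)$ by computing $f(e+w)$ two ways via Proposition \ref{p surjective isometries between the spheres preserve finite rank tripotents}$(c)$ and Corollary \ref{c rank preservation bewteen elementary}$(a)$, then invoke Proposition \ref{p identity principle}. The only cosmetic difference is that the paper first treats a minimal tripotent $e_1\in E_0(e)$ (using both signs $e\pm e_1$, though only $+$ is needed) and then passes to general finite rank tripotents via a second appeal to Corollary \ref{c rank preservation bewteen elementary}, whereas you handle an arbitrary finite rank $w$ in one shot; your version is marginally cleaner but not substantively different. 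Your remark that Proposition \ref{p identity principle} only requires $T$ to be continuous and respect the convex combinations in \eqref{eq finite convex combination1} is accurate and implicitly relied upon by the paper as well.
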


\begin{proof} Let $e_1$ be a rank-one tripotent in $E_0(e)$. By Corollary \ref{c rank preservation bewteen elementary} we have $f(e\pm e_1) = f(e) \pm f(e_1)$, and from Proposition \ref{p surjective isometries between the spheres preserve finite rank tripotents}$(c)$ we also known that $$ f(e) \pm f(e_1)= f (e \pm e_1 ) = f(e) + T_e (\pm e_1),$$ which implies that $T_e (e_1) = f(e_1)$. A new application of Corollary \ref{c rank preservation bewteen elementary} proves that \begin{equation}\label{eq Te and f coincide on finite ranks} T_e (u) = f(u),
\end{equation} for every finite rank tripotent $u$ in $E_0(e)$. Since $E_0(e)$ is a JB$^*$-subtriple of $E$, the desired conclusion follows from \eqref{eq Te and f coincide on finite ranks} and Proposition \ref{p identity principle}.
\end{proof}

We shall need some additional properties of the real linear isometries $T_e$ given by Proposition \ref{p surjective isometries between the spheres preserve finite rank tripotents}$(c)$.

\begin{lemma}\label{l Te and Tv coincide on the intersection} $f: S(E) \to S(B)$ be a surjective isometry between the unit spheres of two weakly compact JB$^*$-triples not containing direct summands of rank smaller or equal than 3, or between two compact C$^*$-algebras. Let $e_1$ and $e_2$ be two orthogonal finite rank tripotents in $E$, and let $T_{e_1}$ and $T_{e_2}$ be the maps given by Proposition \ref{p surjective isometries between the spheres preserve finite rank tripotents}$(c)$. Then $$\hbox{$T_{e_{1}} (x) = T_{e_{2}} (x)$ for all $x\in E_0(e_{1}) \cap E_0(e_{2})$.}$$
\end{lemma}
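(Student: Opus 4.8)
The plan is to read this off almost directly from Corollary~\ref{cor Te coincides with f on the orthogonal of e}, which already pins down each $T_e$ on the unit sphere of $E_0(e)$. If $E_0(e_1)\cap E_0(e_2)=\{0\}$ there is nothing to prove, so I would assume this subspace is nonzero and fix an arbitrary $x$ in its unit sphere.

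First I would apply Corollary~\ref{cor Te coincides with f on the orthogonal of e} to the finite rank tripotent $e_1$: it gives $f(y)=T_{e_1}(y)$ for every $y\in S(E_0(e_1))$. Applying the same corollary to $e_2$ gives $f(y)=T_{e_2}(y)$ for every $y\in S(E_0(e_2))$. Since $E_0(e_1)\cap E_0(e_2)$ is a linear subspace of both $E_0(e_1)$ and $E_0(e_2)$ carrying the inherited norm, its unit sphere is contained in both $S(E_0(e_1))$ and $S(E_0(e_2))$; hence for our $x$ we obtain $T_{e_1}(x)=f(x)=T_{e_2}(x)$.

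It then remains to pass from the unit sphere to the whole subspace, and here positive homogeneity suffices: both $T_{e_1}$ and $T_{e_2}$ are real linear and $E_0(e_1)\cap E_0(e_2)$ is a linear subspace, so for an arbitrary nonzero $x\in E_0(e_1)\cap E_0(e_2)$ one applies the identity just obtained to $x/\|x\|$ and multiplies through by $\|x\|$; the case $x=0$ is trivial. The compact C$^*$-algebra version runs identically, since Corollary~\ref{cor Te coincides with f on the orthogonal of e} is stated in that setting as well.

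There is essentially no obstacle beyond this harmless reduction from the sphere to the linear span together with the degenerate case $E_0(e_1)\cap E_0(e_2)=\{0\}$; the genuine content has already been absorbed into Corollary~\ref{cor Te coincides with f on the orthogonal of e} and, underneath it, into the identity principle Proposition~\ref{p identity principle}. One could equally well argue by invoking that proposition directly: since $e_1\perp e_2$ one has $E_0(e_1)\cap E_0(e_2)=E_0(e_1+e_2)$, which is a JB$^*$-subtriple of $E$, and the restrictions of $T_{e_1}$ and of $T_{e_2}$ to it are bounded real linear maps agreeing with $f$ on every finite rank tripotent of $E_0(e_1+e_2)$ (again by Corollary~\ref{cor Te coincides with f on the orthogonal of e}), which forces each of them to equal $f$ on $S(E_0(e_1+e_2))$ and hence to coincide on $E_0(e_1+e_2)$.
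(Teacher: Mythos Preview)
Your argument is correct and there is no circularity: Corollary~\ref{cor Te coincides with f on the orthogonal of e} precedes the lemma and its proof does not use it. However, your route differs from the paper's. The paper argues directly from Proposition~\ref{p surjective isometries between the spheres preserve finite rank tripotents}$(c)$ and Corollary~\ref{c rank preservation bewteen elementary}: for $x\in E_0(e_1)\cap E_0(e_2)$ with $\|x\|\le 1$ it evaluates $f(e_1+e_2+x)$ in two ways, obtaining $u_2+T_{e_2}(e_1+x)=u_1+T_{e_1}(e_2+x)$, and then uses $f(e_1+e_2)=u_1+u_2$ to identify $T_{e_2}(e_1)=u_1$ and $T_{e_1}(e_2)=u_2$, whence $T_{e_1}(x)=T_{e_2}(x)$ drops out by linearity. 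Your approach is shorter because the heavy lifting has already been done in Corollary~\ref{cor Te coincides with f on the orthogonal of e} (and, beneath it, in the identity principle Proposition~\ref{p identity principle} via Russo--Dye); the paper's computation is logically lighter, using only the affine structure of $f$ on faces and the additivity on orthogonal tripotents, and it yields as a by-product the explicit identities $T_{e_1}(e_2)=f(e_2)$ and $T_{e_2}(e_1)=f(e_1)$.
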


\begin{proof} Let $x$ be an element in $E_0 (e_{1}) \cap E_0(e_{2})$ with $\|x\|\leq 1$. Let $u_j = f(e_j)$. We deduce, via Proposition \ref{p surjective isometries between the spheres preserve finite rank tripotents}$(c)$, that $$u_2 + T_{e_{2}} (e_{1}+x)= f(e_{1} + e_{2} + x) = u_1 + T_{e_{1}} (e_{2}+ x),$$ and by Corollary \ref{c rank preservation bewteen elementary} $$u_1+u_2= f(e_{1}) +f (e_{2}) = f(e_{1} +e_{2}) = u_1 + T_{e_{1}} (e_{2}) = u_2 + T_{e_{2}} (e_{1}).$$ Therefore, $u_1 = T_{e_{2}} (e_{1}),$ $T_{e_{1}} (e_{2}) = u_2$, $T_{e_{1}} (x) = T_{e_{2}} (x)$, which proves the statement.
\end{proof}

We can state now a first answer to Tingley's problem in the case of weakly compact JB$^*$-triples which are expressed as a sum of at least two elementary JB$^*$-triples.

\begin{theorem}\label{thm Tyngley co sums with more than one element} Let $f: S(E) \to S(B)$ be a surjective isometry between the unit spheres of two weakly compact JB$^*$-triples not containing direct summands of rank smaller or equal than 3 {\rm(}respectively, between two compact C$^*$-algebras{\rm)}. Suppose that $\displaystyle E=\oplus^{c_0}_{j\in J} K_j$, where $\sharp J \geq 2$, and every $K_j$ is an elementary JB$^*$-triple {\rm(}respectively, each $K_j$ coincides with $K(H_j)$ for a suitable complex Hilbert space $H_j${\rm)}. Then there exists a surjective real linear isometry $T: E\to B$ satisfying $T|_{S(E)} = f$.
\end{theorem}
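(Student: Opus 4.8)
The plan is to write $E = \oplus^{c_0}_{j\in J} K_j$ and to build the desired real linear isometry $T$ piece by piece on each summand, then glue. For a fixed index $j_0$, pick any minimal tripotent $e\in K_{j_0}$; since $\sharp J\ge 2$ and each $K_i$ has rank $\ge 4$, the Peirce-zero space $E_0(e)$ contains all the summands $K_j$ with $j\ne j_0$ as well as a large piece of $K_{j_0}$ itself, so $E_0(e)$ is again a weakly compact JB$^*$-triple of the required type. Proposition \ref{p surjective isometries between the spheres preserve finite rank tripotents}$(c)$ produces a finite rank tripotent $u=f(e)$ in $B$ and a surjective real linear isometry $T_e\colon E_0(e)\to B_0(u)$, and Corollary \ref{cor Te coincides with f on the orthogonal of e} tells us $f$ and $T_e$ agree on all of $S(E_0(e))$. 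The first step is therefore to run this construction over a family of minimal tripotents chosen so that the associated Peirce-zero spaces cover $E$: for instance, for each $j$ fix a minimal tripotent $e_j\in K_j$, together with a second minimal tripotent $e_j'\in K_j$ orthogonal to $e_j$ (possible since $\mathrm{rank}(K_j)\ge 4$); then $K_j\subseteq E_0(e_i)$ for every $i\ne j$, and $K_j\subseteq E_0(e_j')$.

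The second step is to check that the various local isometries $T_{e_i}$ are mutually compatible, so that they assemble into a single linear map. This is exactly the role of Lemma \ref{l Te and Tv coincide on the intersection}: whenever $e_i\perp e_j$, the maps $T_{e_i}$ and $T_{e_j}$ coincide on $E_0(e_i)\cap E_0(e_j)$. Using the covering from the first step, any element $x\in E$ can be decomposed according to the $c_0$-sum, and on each component $K_j$ one uses one of the available $T_{e_i}$'s with $i\ne j$; the pairwise agreement on overlaps shows the resulting assignment $x\mapsto Tx$ is well defined. Linearity and the isometric property are inherited locally from each $T_{e_i}$; the only genuinely global point is norm continuity/contractivity, which follows because on each finite sub-sum $\oplus_{j\in F} K_j$ (with $F\subseteq J$ finite) the map agrees with a single $T_{e_i}$ restricted to that subspace, together with the fact that $\|\cdot\|$ on a $c_0$-sum is the supremum of the component norms; thus $T$ extends to a well-defined linear contraction on $E$, and symmetrically $f^{-1}$ yields its inverse, so $T$ is a surjective real linear isometry.

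The third step is to verify $T|_{S(E)}=f$. By construction $T$ agrees with $f$ on every finite rank tripotent of $E$: such a tripotent lies in some $K_{j_1}\oplus\cdots\oplus K_{j_n}$, hence in $E_0(e_i)$ for a suitable $i$, and there $T=T_{e_i}=f$ by Corollary \ref{cor Te coincides with f on the orthogonal of e} combined with Corollary \ref{c rank preservation bewteen elementary}. Now invoke Proposition \ref{p identity principle} with $A=E$: $T$ is a bounded linear operator from $E$ to $B$ agreeing with $f$ on all finite rank tripotents of $E$, so $f(x)=T(x)$ for every $x\in S(E)$. The compact C$^*$-algebra case is identical, using the parallel statements for compact C$^*$-algebras in Corollaries \ref{c rank preservation bewteen elementary} and \ref{cor Te coincides with f on the orthogonal of e} and Lemma \ref{l Te and Tv coincide on the intersection}.

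The main obstacle I anticipate is the well-definedness and continuity in the second step: one must be careful that the covering of $E$ by Peirce-zero spaces $E_0(e_i)$ is genuinely adequate (this is where the hypothesis $\sharp J\ge 2$ is essential — with a single summand no minimal tripotent has $E_0(e)\supseteq$ the whole of $K_{j_0}$), and that the compatibility relations from Lemma \ref{l Te and Tv coincide on the intersection} really do propagate to arbitrary (not merely orthogonal) pairs of indices when one passes through a common third tripotent. Handling an element with infinitely many nonzero components requires the $c_0$-approximation and the continuity of $f$ and $T$, exactly as in the proof of Proposition \ref{p identity principle}.
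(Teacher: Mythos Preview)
Your overall strategy matches the paper's: produce local real linear isometries $T_e$ via Proposition \ref{p surjective isometries between the spheres preserve finite rank tripotents}$(c)$, use Corollary \ref{cor Te coincides with f on the orthogonal of e} to identify $T_e$ with $f$ on $S(E_0(e))$, glue by Lemma \ref{l Te and Tv coincide on the intersection}, check $T=f$ on finite rank tripotents, and conclude with Proposition \ref{p identity principle}. Two points deserve correction, however.

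First, the assertion ``$K_j\subseteq E_0(e_j')$'' is false: $e_j'\in K_j$ but $e_j'\notin E_0(e_j')$. Fortunately you never use it --- in your second step you only apply $T_{e_i}$ on $K_j$ with $i\neq j$, which is fine since $K_j\subseteq E_0(e_i)$ whenever $i\neq j$. The second tripotent $e_j'$ is simply unnecessary and should be dropped.

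Second, your boundedness argument (``on each finite sub-sum $\oplus_{j\in F}K_j$ the map agrees with a single $T_{e_i}$'') requires choosing $i\in J\setminus F$, which fails when $J$ is finite and $F=J$. The paper avoids this altogether by a much simpler construction: fix just \emph{two} indices $j_1\neq j_2$ and tripotents $e_{j_1}\in K_{j_1}$, $e_{j_2}\in K_{j_2}$, then set
\[
T(x)=T_{e_{j_2}}(x_{j_1})+T_{e_{j_1}}(x_{j_2})+T_{e_{j_1}}(x_0),
\]
where $x_0$ is the component in $\oplus_{j\neq j_1,j_2}K_j$. Well-definedness is immediate from the direct-sum decomposition, Lemma \ref{l Te and Tv coincide on the intersection} gives the single compatibility needed, and the crude bound $\|T\|\le 3$ is trivial. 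There is no need for a family indexed by all of $J$, nor for any propagation of compatibilities through third tripotents. Once $T$ is known to be bounded linear and to agree with $f$ on finite rank tripotents, Proposition \ref{p identity principle} finishes the proof and the isometric property of $T$ follows a posteriori from $T|_{S(E)}=f$, rather than being checked directly.
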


\begin{proof} Since  $\sharp J \geq 2$, we can pick two different subindexes $j_1$ and $j_2$ in $J$. Let $e_{_{j_1}}\in K_{j_1}$ and $e_{_{j_2}}\in K_{j_2}$ be finite rank tripotents, and let  $T_{e_j} : E_0(e_j) \to B_0(u_j)$ be the surjective real linear isometry given by Proposition \ref{p surjective isometries between the spheres preserve finite rank tripotents}$(c)$, where $u_j = f(e_j)$. By Corollary \ref{cor Te coincides with f on the orthogonal of e}  we know that $f(x) = T_{e_j} (x)$ for all $x\in S(E_0(e_j))$.\smallskip

It is easy to see that \begin{equation}\label{eq decomposition of E orthogonals} E = K_{j_1}\oplus K_{j_{2}}\oplus (\oplus^{c_0}_{j\neq j_1,j_2} K_j), \end{equation} with $\displaystyle\oplus^{c_0}_{j\neq j_1,j_2} K_j \subseteq E_0(e_{_{j_1}}) \cap E_0(e_{_{j_2}})$.\smallskip

Lemma \ref{l Te and Tv coincide on the intersection} proves that \begin{equation}\label{eq Tej coincide on the intersection} \hbox{$T_{e_{_{j_1}}} (x) = T_{e_{_{j_2}}} (x)$ for all $x\in E_0(e_{_{j_1}}) \cap E_0^{**}(e_{_{j_2}})$.}
\end{equation}

Let us define a mapping $T : E\to B$ given by $$T(x) = T_{e_{_{j_2}}} (x_{j_1}) + T_{e_{_{j_1}}} (x_{j_2}) + T_{e_{_{j_1}}} (x_0),$$ where $x = x_{j_1} + x_{j_2} + x_0$, $ x_{j_1}\in  K_{j_1},$ $x_{j_2}\in K_{j_2}$, $x_0\in \oplus^{c_0}_{j\neq j_1,j_2} K_j$, with respect to \eqref{eq decomposition of E orthogonals}. The mapping $T$ is well defined thanks to \eqref{eq Tej coincide on the intersection} and the uniqueness of the decomposition in \eqref{eq decomposition of E orthogonals}. The same argument and the real linearity of $T_{e_{j_1}}$ and $T_{e_{j_2}}$ prove that $T$ is real linear. Clearly $T$ is bounded with $\|T\|\leq 3$. \smallskip

Every finite rank tripotent in $E$ is of the form $u=u_{j_1}+ u_{j_2} + u_0$, where $u_{j_1}$, $u_{j_2}$ and $u_0$ are finite rank tripotents in $K_{j_1}$, $K_{j_2}$, and $\displaystyle \oplus^{c_0}_{j\notin \{j_1,j_2\}} K_j\subseteq  E_0(e_{_{j_1}}) \cap E_0(e_{_{j_2}})$, respectively. Now, by Corollary \ref{c rank preservation bewteen elementary} we have $$ f(u) = f(u_{j_1}+ u_{j_2} + u_0) = f(u_{j_1}+ u_{j_2})  + f(u_0) = f(u_{j_1}) + f(u_{j_2}) + f(u_0)$$ $$=\hbox{(by Corollary \ref{cor Te coincides with f on the orthogonal of e}) } = T_{e_{j_2}} (u_{j_1}) + T_{e_{j_1}} (u_{j_2}) + T_{e_{j_1}} (u_0) = T(u).$$
We have therefore shown that $f(u) = T(u)$ for every finite rank tripotent $u$ in $E$.
Finally Proposition \ref{p identity principle} assures that $T(x) = f(x)$ for every $x\in S(E)$, and in particular $T$ is a real linear surjective isometry.
\end{proof}

Many interesting consequences can be derived from the previous theorem. For example, suppose that $H_1$, $H_2$, $H_3$ and $H_4$, $H_1^{\prime}$, $H_2^{\prime}$, $H_3^{\prime}$, and $H_4^{\prime}$ are complex Hilbert spaces and $\displaystyle f : S(K(H_1,H_2)\oplus^{\infty} K(H_3,H_4))\to S(K(H_1^{\prime},H_2^{\prime})\oplus^{\infty} K(H_3^{\prime},H_4^{\prime}))$ is a surjective real linear isometry. Then there exists a real linear surjective isometry $T : K(H_1,H_2)\oplus^{\infty} K(H_3,H_4)\to K(H_1^{\prime},H_2^{\prime})\oplus^{\infty} K(H_3^{\prime},H_4^{\prime})$ such that $$T|_{S(K(H_1,H_2)\oplus^{\infty} K(H_3,H_4))} =f.$$

After Theorem \ref{thm Tyngley co sums with more than one element} above and the structure theory of weakly compact JB$^*$-triples (see Theorem \ref{thm compact triples BuncChu}), the study of surjective isometries between the unit spheres of two compact JB$^*$-triples can de reduced to the study of surjective isometries between the unit spheres of two elementary JB$^*$-triples.\label{commments after theorem}\smallskip

We are now in position to prove the second main result of this note.

\begin{theorem}\label{thm Tingley thm ofr weakly compact JB*-triples rank 5} Let $f: S(E) \to S(B)$ be a surjective isometry between the unit spheres of two weakly compact JB$^*$-triples not containing direct summands of rank smaller or equal than 3, or between two compact C$^*$-algebras. Suppose $E$ has rank greater or equal than 5. Then there exists a surjective real linear isometry $T: E\to B$ satisfying $T|_{S(E)} = f$.
\end{theorem}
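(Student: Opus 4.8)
The plan is to reduce everything, via the structure theory, to the case of a single elementary JB$^*$-triple, and there to patch the local real-linear isometries furnished by Proposition~\ref{p surjective isometries between the spheres preserve finite rank tripotents}$(c)$ into a single global map, identifying it with $f$ through Proposition~\ref{p identity principle}.

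\emph{Reduction.} First I would invoke Theorem~\ref{thm compact triples BuncChu}: $E$ is a $c_0$-sum of elementary JB$^*$-triples. If there are at least two summands, Theorem~\ref{thm Tyngley co sums with more than one element} already produces the extension; applying the same dichotomy to $f^{-1}$ and using that the rank is preserved (Corollary~\ref{c rank preservation bewteen elementary}$(c)$), it suffices to treat the case where $E$ and $B$ are single elementary JB$^*$-triples (respectively $E=K(H)$ and $B=K(H')$). Since neither contains a direct summand of rank $\leq 3$ and $\operatorname{rank}(E)\geq 5$, in the remaining case both $E$ and $B$ are of type $K_1$, $K_2$ or $K_3$ (respectively $K(H)$, $K(H')$) and have rank $\geq 5$.

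\emph{Building $T$.} Fix a minimal tripotent $e_1\in E$, put $u_1=f(e_1)$, and let $T_{e_1}\colon E_0(e_1)\to B_0(u_1)$ be the surjective real-linear isometry of Proposition~\ref{p surjective isometries between the spheres preserve finite rank tripotents}$(c)$; by Corollary~\ref{cor Te coincides with f on the orthogonal of e}, $T_{e_1}(x)=f(x)$ for every finite rank tripotent $x\in E_0(e_1)$. I would decompose $E=\mathbb{C}e_1\oplus E_1(e_1)\oplus E_0(e_1)$ (a topological direct sum) and define $T$ piecewise: $T:=T_{e_1}$ on $E_0(e_1)$; $T$ on $\mathbb{C}e_1$ as the real-linear extension of $e_1\mapsto f(e_1)$ and $ie_1\mapsto f(ie_1)$; and, for $\zeta\in E_1(e_1)$, $T(\zeta):=T_w(\zeta)$ where $w$ is a minimal tripotent with $w\perp e_1$ and $\zeta\in E_0(w)$. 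Here is where $\operatorname{rank}(E)\geq 5$ is used: a dimension count (a Peirce-one element of a minimal tripotent involves, on each side, only one direction beyond those of $e_1$) shows that such a $w$ exists and, given two candidates $w,w'$, that there is a third minimal tripotent $w''\perp e_1,w,w'$ with $\zeta\in E_0(w'')$; then Lemma~\ref{l Te and Tv coincide on the intersection} forces $T_w(\zeta)=T_{w''}(\zeta)=T_{w'}(\zeta)$, so $T(\zeta)$ is well defined. Choosing, for finitely many $\zeta$'s at once, a single admissible $w$, and using that each $T_w$ is real-linear, shows that $T|_{E_1(e_1)}$ is real-linear; altogether $T\colon E\to B$ is a bounded real-linear map.

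\emph{Identification and the obstacle.} The decisive point I would establish next is that $T|_{E_0(w)}=T_w$ for every minimal tripotent $w\perp e_1$: since $e_1$ and $w$ are orthogonal their Peirce projections commute, so $E_0(w)$ splits along the Peirce decomposition of $e_1$, and on the three pieces $T$ and $T_w$ agree --- on $E_0(w)\cap E_0(e_1)=E_0(e_1+w)$ by Lemma~\ref{l Te and Tv coincide on the intersection}, on $E_0(w)\cap E_1(e_1)$ because $w$ is an admissible auxiliary tripotent in the construction, and on $\mathbb{C}e_1\subseteq E_0(w)$ because both equal the real-linear extension of $e_1\mapsto f(e_1)$, $ie_1\mapsto f(ie_1)$ (Corollary~\ref{cor Te coincides with f on the orthogonal of e} applied to $E_0(w)$). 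Hence $T(u)=T_w(u)=f(u)$ for every finite rank tripotent $u\in E_0(w)$. Finally, for an arbitrary finite rank tripotent $u=u_1+\dots+u_m$ (sum of mutually orthogonal minimal tripotents), each $u_j$ is orthogonal to some minimal $w_j\perp e_1$ because $\operatorname{rank}(E)\geq 5$, so $T(u_j)=f(u_j)$, and Theorem~\ref{t Tingley antipodes for finite rank} gives $f(u)=\sum_j f(u_j)=\sum_j T(u_j)=T(u)$. Thus $T$ agrees with $f$ on every finite rank tripotent of $E$, and Proposition~\ref{p identity principle} (with $A=E$) yields $T|_{S(E)}=f$; in particular $T$ is a surjective real-linear isometry extending $f$. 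I expect the genuinely delicate step to be the construction of $T$ on $E_1(e_1)$ together with the compatibility statements just used: these require the explicit geometry of the Cartan factors $K_1$, $K_2$, $K_3$ (and of $K(H)$ in the C$^*$-case) and the precise dimension count that pins down the threshold $5$; everything else is bookkeeping on top of the Propositions and Lemmas already at hand.
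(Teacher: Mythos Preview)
Your approach is correct but differs from the paper's in a way worth noting. You first reduce to a single elementary factor and then build $T$ using a \emph{variable} auxiliary tripotent $w\perp e_1$ depending on each $\zeta\in E_1(e_1)$; the rank-$5$ hypothesis enters through your factor-specific dimension counts (existence of $w$, of a common $w$ for two given $\zeta$'s, and of $w''\perp e_1,w,w'$ for well-definedness). The paper instead works directly (no reduction) with a \emph{fixed} refined Peirce decomposition
\[
E=\mathbb{C}e_1\oplus\bigl(E_1(e_1)\cap E_1(e_2)\bigr)\oplus\bigl(E_1(e_1)\cap E_0(e_2)\bigr)\oplus E_0(e_1)
\]
relative to three mutually orthogonal minimal tripotents $e_1,e_2,e_3$, setting $T:=T_{e_3}\oplus T_{e_3}\oplus T_{e_2}\oplus T_{e_1}$ on the four pieces. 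The rank-$5$ hypothesis is then used only once, and abstractly: for any minimal $e$ one picks $e_4\perp e_1,e_2,e_3,e$ and checks via pure Peirce arithmetic that each component of $e$ lies in $E_0(e_4)$, whence Lemma~\ref{l Te and Tv coincide on the intersection} gives $T(e)=T_{e_4}(e)=f(e)$. This avoids entirely the case-by-case geometry of $K_1,K_2,K_3$ that you flag as the delicate step; your route buys a slightly more conceptual picture (a single base tripotent $e_1$ plus local charts $T_w$), at the cost of those factor-dependent verifications.
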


\begin{proof} Since the rank of $E$ is greater or equal than $3$, we can find three mutually orthogonal minimal tripotents $e_1,e_2$ and $e_2$ in $E$. For each $j\in \{1,2,3\}$, let  $T_{e_j} : E_0^{**}(e_j) \cap E \to B_0^{**}(v_j) \cap B$ be the surjective real linear isometry given by Proposition \ref{p surjective isometries between the spheres preserve finite rank tripotents}$(c)$, where $v_j=f(u_j)$. Proposition \ref{p surjective isometries between the spheres preserve norm closed faces} and Corollary \ref{c rank preservation bewteen elementary} assure that $v_1,v_2$ and $v_3$ are mutually orthogonal minimal tripotents in $B$ and $f(e_1+e_2+e_3) = v_1 +v_2+v_3$.\smallskip

It is known that \begin{equation}\label{eq joint decompostion 1} E = \mathbb{C} e_1 \oplus (E_1 (e_1) \cap E_1(e_2))\oplus (E_1 (e_1) \cap E_0(e_2)) \oplus E_0 (e_1)
 \end{equation}(just compare the joint Peirce decomposition given in \cite[(1.12)]{Horn87}). Each $x$ in $E$ writes uniquely in the form $x= \lambda_x e_1 + x_{11}+ x_{10}+ x_0$, where $\lambda_x\in \mathbb{C}$, $x_{11}\in  (E_1 (e_1) \cap E_1(e_2))$, $x_{10}\in  (E_1 (e_1) \cap E_0(e_2)),$ and $x_0\in E_0 (e_1).$
We define a mapping $T: E\to B$ given by $$T(x) =T(\lambda_x e_1 + x_{11}+ x_{10}+ x_0) :=   T_{e_3} (\lambda_x e_1) + T_{e_3} (x_{11}) + T_{e_2}(x_{10})+ T_{e_1}(x_0),$$ where $x= \lambda_x e_1 + x_{11}+ x_{10}+ x_0$. The mapping $T$ is well defined and real linear thanks to the uniqueness of the decomposition in \eqref{eq joint decompostion 1} and the real linearity of $T_{e_1}$, $T_{e_2}$ and $T_{e_3}$.\smallskip

We claim that \begin{equation}\label{eq f and T coincide on rank one tripotents second thm} T(e) =f(e), \hbox{ for every minimal tripotent $e$ in $E$.}
\end{equation} Let $e$ be a minimal tripotent in $E$. Since $E$ has rank bigger or equal than 5, we can find $e_4$ satisfying $e_4 \perp e_1, e_2, e_3, e$.\smallskip

Since $e\in E_0(e_4)$, Corollary \ref{cor Te coincides with f on the orthogonal of e} implies that \begin{equation}\label{eq T4 and f coincide on e} f(e) = T_{e_4} (e).
\end{equation}

Le us write $e= \lambda_e e_1 + e_{11}+ e_{10}+ e_0$, where $\lambda_e\in \mathbb{C}$, $e_{11}\in  E_1 (e_1) \cap E_1(e_2)$, $e_{10}\in  (E_1 (e_1) \cap E_0(e_2)),$ and $e_0\in E_0 (e_1).$ Clearly, $\lambda_e e_1 \perp e_4$. By Peirce arithmetic $e_{11}\in  E_1 (e_1) \cap E_1(e_2) \subset E_2 (e_1+e_2) \perp e_4$, because $e_1+e_2\perp e_4$.\smallskip

Now, having in mind that $e\perp e_4$, we deduce that $$0=L(e_4,e_4) (e) = L(e_4,e_4) (\lambda_e e_1 + e_{11}+ e_{10}+ e_0) = L(e_4,e_4) (e_{10})+ L(e_4,e_4) (e_0).$$ A new application of Peirce arithmetic, gives $$ L(e_4,e_4) (e_{10}) \in E_1 (e_1), \hbox{ and } L(e_4,e_4) (e_0) \in E_0 (e_1),$$ and thus $$L(e_4,e_4) (e_0) = L(e_4,e_4) (x_{10})=0.$$ We have therefore shown that $\lambda_e e_1, e_{11}, e_{10}, e_0\in E_0(e_4)$. Applying Lemma \ref{l Te and Tv coincide on the intersection} to $T_{e_4}$ and $T_{e_3}$ (respectively, to $T_{e_4}$ and $T_{e_2}$, and $T_{e_4}$ and $T_{e_3}$) we obtain: $$T(e) = T_{e_3} (\lambda_x e_1) + T_{e_3} (x_{11}) + T_{e_2}(x_{10})+ T_{e_1}(x_0) $$ $$= T_{e_4} (\lambda_x e_1) + T_{e_4} (x_{11}) + T_{e_4}(x_{10})+ T_{e_4}(x_0) = T_{e_4} (e) = \hbox{ (by \eqref{eq T4 and f coincide on e}) } = f(e),$$ which concludes the proof of the claim.\smallskip

Let $u$ be a finite rank tripotent in $E$, and let us write $u= u_1+\ldots+u_m$, where $u_1,\ldots, u_m$ are mutually orthogonal minimal tripotents in $E$. By Corollary \ref{c rank preservation bewteen elementary} and \eqref{eq f and T coincide on rank one tripotents second thm} we have $$f(u)= f(u_1)+\ldots+f(u_m) = T(u_1)+\ldots+T(u_m)=T(u).$$ This shows that $T(u) = f(u)$ for every finite rank tripotent $u$ in $E$. Under these circumstances, Proposition \ref{p identity principle} implies that $T(x) = f(x)$, for every $s\in S(E)$, which concludes the proof.
\end{proof}

Let $A$ be a C$^*$-algebra. It is known that $A$ is a weakly compact JB$^*$-triple if and only if it is a compact JB$^*$-triple if and only if $A$ is a compact C$^*$-algebra in the sense employed in \cite{Alex,Yli}. It is therefore known that $A= \oplus^{c_0}_{j} K(H_j)$, where each $H_j$ is a complex Hilbert space. We can establish now a positive answer to Tingley's conjecture in the case of a surjective isometry between the unit spheres of two compact C$^*$-algebras.

\begin{theorem}\label{thm Tingley compact Cstaralgebras} Let $f: S(A)\to S(B)$ be a surjective isometry between the unit spheres of two compact C$^*$-algebras. Then there exists a surjective real linear isometry $T:A\to B$ such that $T(x) = f(x),$ for every $x\in S(A)$. In particular, the same conclusion holds when $A$ and $B$ are the C$^*$-algebras of compact operators on arbitrary complex Hilbert spaces.
\end{theorem}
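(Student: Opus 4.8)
The plan is to reduce Theorem \ref{thm Tingley compact Cstaralgebras} to the two main structural results already proved, splitting according to the number of elementary summands. Recall that a C$^*$-algebra is a compact JB$^*$-triple precisely when it is a compact C$^*$-algebra in the sense of Alexander and Ylinen, and by the structure theory such an algebra decomposes as $A = \oplus^{c_0}_{j\in J} K(H_j)$ for suitable complex Hilbert spaces $H_j$; similarly $B = \oplus^{c_0}_{i\in I} K(H_i')$. The point is that the standing hypotheses of Theorems \ref{thm Tyngley co sums with more than one element} and \ref{thm Tingley thm ofr weakly compact JB*-triples rank 5}, when specialized to the C$^*$-case, do \emph{not} require any lower bound on the rank of the summands: Lemma \ref{l two finite rank tripotents at distance 2 in KH} and Theorem \ref{t Tingley antipodes for finite rank compact C*-algebras} hold for arbitrary compact C$^*$-algebras, and every proposition and corollary in Section 3 was stated with the parenthetical ``or between two compact C$^*$-algebras'' alternative precisely so that this last theorem follows formally.

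First I would dispose of the case $\sharp J \geq 2$. Here Theorem \ref{thm Tyngley co sums with more than one element}, in its compact C$^*$-algebra formulation, applies verbatim with $K_j = K(H_j)$, and yields a surjective real linear isometry $T : A \to B$ with $T|_{S(A)} = f$. Note one must first observe that $\sharp J \geq 2$ forces $\sharp I \geq 2$ as well, but this is automatic from Corollary \ref{c rank preservation bewteen elementary} (or directly from the fact that $f$ carries maximal proper faces to maximal proper faces and these detect the $c_0$-sum decomposition), so no extra work is needed to invoke the theorem. Thus the only remaining case is $A = K(H)$ for a single complex Hilbert space $H$, and correspondingly $B = K(H')$.

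So the heart of the proof is the case $A = K(H)$, $B = K(H')$. If $\dim H \geq 5$, then $A$ has rank $\dim H \geq 5$ and Theorem \ref{thm Tingley thm ofr weakly compact JB*-triples rank 5}, again in the compact C$^*$-algebra version, produces the desired extension $T$. This leaves the finitely many ``small'' cases $\dim H \leq 4$, i.e. $H = \mathbb{C}^n$ with $n \in \{1,2,3,4\}$, so that $A = M_n(\mathbb{C})$. Here one appeals to the already-established solution of Tingley's problem for finite dimensional C$^*$-algebras: by \cite{Tan2016-2} every surjective isometry between the unit spheres of two finite dimensional C$^*$-algebras extends to a surjective real linear isometry. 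Since $B = K(H')$ must then also be finite dimensional (a surjective isometry between unit spheres cannot send a compact sphere onto a non-compact one, or again by rank preservation $\dim H' = \dim H = n$), $B = M_n(\mathbb{C})$ and the cited result closes the gap. The final assertion about $A$ and $B$ being the algebras of compact operators on arbitrary complex Hilbert spaces is then immediate, since $K(H)$ is always a compact C$^*$-algebra.

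The main obstacle, and the only genuinely delicate point, is the small-rank case $\dim H \leq 4$: the techniques of this paper deliberately exclude direct summands of rank $\leq 3$ (because Lemma \ref{l two finite rank tripotents at distance 2} and the facial/orthogonality machinery break down there) and the inductive ``rank $\geq 5$'' argument of Theorem \ref{thm Tingley thm ofr weakly compact JB*-triples rank 5} needs a spare orthogonal tripotent $e_4 \perp e_1,e_2,e_3,e$, which is unavailable when $\dim H \leq 4$. Hence one cannot hope to treat $M_1,M_2,M_3,M_4$ by the present methods and must import them from \cite{Tan2016-2}. Everything else is a bookkeeping reduction: identify which of the three mutually exclusive cases ($\sharp J \geq 2$; $A = K(H)$ with $\dim H \geq 5$; $A = M_n(\mathbb{C})$ with $n \leq 4$) one is in, check that $B$ lies in the matching case via Corollary \ref{c rank preservation bewteen elementary}, and quote the corresponding theorem.
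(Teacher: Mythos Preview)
Your proposal is correct and follows essentially the same strategy as the paper's proof: both split into the cases (i) at least two summands, (ii) a single summand of large rank, and (iii) the residual small finite-dimensional case, invoking Theorem \ref{thm Tyngley co sums with more than one element}, Theorem \ref{thm Tingley thm ofr weakly compact JB*-triples rank 5}, and \cite{Tan2016-2} respectively. The only cosmetic difference is that the paper disposes of \emph{all} finite-dimensional $A$ at once via \cite{Tan2016-2} (or \cite{Tan2016preprint}) before invoking the other two theorems, whereas you handle finite-dimensional $K(H)$ with $\dim H\geq 5$ through Theorem \ref{thm Tingley thm ofr weakly compact JB*-triples rank 5}; this is harmless.
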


\begin{proof} When $A$ and $B$ are finite dimensional, the conclusion follows from the main result in \cite[Theorem 4.8]{Tan2016preprint} or from \cite{Tan2016-2}.\smallskip

If $A$ (or $B$) is the $c_0$-sum of two or more $K(H_j)$, then the result follows from Theorem \ref{thm Tyngley co sums with more than one element} (compare the comments in page \pageref{commments after theorem}). We can thus assume that $f: S(K(H_1))\to S(K(H_2))$ is a surjective isometry. 
Since $H_1$ and $H_2$ are infinite dimensional the desired conclusion is a straight consequence of Theorem \ref{thm Tingley thm ofr weakly compact JB*-triples rank 5}.
\end{proof}

The results in this note cannot be applied in the study of surjective isometries between the unit spheres of two elementary JB$^*$-triples of rank $\leq  4$. This question is left as an open problem, although some additional information is known. Let $K$ be an elementary JB$^*$-triple of rank 1,2,3 or 4. The second half of the Table 1 in \cite[page 210]{Ka97} implies that $K$ is one of the following Cartan factors:
\begin{enumerate}[$(a)$]\item A Cartan factor of type 1 of the form  $L(H,K)$, with dim$(K)\leq 4$;
\item A Cartan factor of type 2, with dim$(H)\leq 9$;
\item A Cartan factor of type 3, with dim$(H)\leq 4$;
\item A spin factor whose rank is always 2.
\end{enumerate}

A Cartan factor $C$ of rank 1 must be type 1 Cartan factor of the form $L(H,\mathbb{C})$, where $H$ is a complex Hilbert space, or a type 2 Cartan
factor with dim $(H)=1$ or $3$ (it is known that $C$ is JB$^*$-triple isomorphic to a 1 or 3 dimensional complex Hilbert space). In any case, every rank-one Cartan factor is of the form $L(H,\mathbb{C})$. Let $H_1$ and $H_2$ be Hilbert spaces. G. Ding proved in \cite{Ding2002} that every isometric  surjective mapping $f: S(H_1)\to S(H_2)$ can be extended to a real linear isometric mapping $T: H_1\to H_2$. Combining these arguments we get:

\begin{corollary}\label{c Tingley between rank one Cartan factors} Let $f: S(H_1)\to S(H_2)$ be a surjective isometry between the unit spheres of two rank one JB$^*$-triples. Then there exists a surjective real linear isometry $T:H_1\to H_2$ such that $T(x) = f(x),$ for every $x\in S(H_1)$.
\end{corollary}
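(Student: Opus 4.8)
The plan is to deduce the corollary from G.~Ding's solution of Tingley's problem for complex Hilbert spaces \cite{Ding2002}, recalled just above, once we know that the rank-one JB$^*$-triples $H_1$ and $H_2$ are, as complex Banach spaces, genuine Hilbert spaces.

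First I would record that a JB$^*$-triple $E$ of rank $1$ is isometric to a complex Hilbert space. The idea is as follows. For a norm-one element $x\in E$, the subtriple $E_x$ is isometric to some $C_0(\Omega)$ with $\Omega\cup\{0\}$ compact and $\|x\|\in\Omega$; were $\Omega$ to contain two distinct points, $E_x$, and hence $E$, would contain two orthogonal nonzero elements, contradicting rank $1$, so $\Omega=\{\|x\|\}$ and $x$ is a tripotent. Likewise the Peirce-two space $E_2(x)$ is a unital JB$^*$-algebra that cannot harbour two orthogonal nonzero elements, so $E_2(x)=\mathbb{C}x$ and $x$ is in fact a minimal tripotent. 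Fixing a minimal tripotent $e$, rank $1$ then forces $E_0(e)=\{0\}$ (a norm-one element of $E_0(e)$ would be a minimal tripotent orthogonal to $e$) and $E_2(e)=\mathbb{C}e$. These properties, together with the classification recalled before the statement (the second half of Table~1 in \cite[page~210]{Ka97}, see also \cite{Chu2012}), which exhibits the rank-one Cartan factors precisely as the complex Hilbert spaces (the type $1$ factors $L(H,\mathbb{C})$, plus the type $2$ factors with $\dim H\in\{1,3\}$, which are $1$- or $3$-dimensional Hilbert spaces), identify $E$ with a complex Hilbert space: indeed $E^{**}$ is a JBW$^*$-triple of rank $1$, hence indecomposable (an $\ell_\infty$-splitting would yield two orthogonal complete tripotents) and so a rank-one Cartan factor, i.e. a complex Hilbert space, and $E$, being a norm-closed subtriple of $E^{**}$ (and norm-closed subtriples of a Hilbert space are exactly its closed subspaces), is a complex Hilbert space as well. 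The same applies to $B$.

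With $E$ and $B$ identified, via complex-linear isometric triple isomorphisms, with complex Hilbert spaces $H_1$ and $H_2$, the map $f$ transports to a surjective isometry $S(H_1)\to S(H_2)$; by Ding's theorem \cite{Ding2002} it extends to a real-linear isometry $H_1\to H_2$, which is surjective because $f$ is already onto $S(H_2)$, and transporting back produces the required $T:E\to B$ with $T|_{S(E)}=f$.

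The delicate point, and the place where care is needed, is the first step: one must be certain that rank $1$ genuinely excludes every Cartan factor except the type $1$ ones $L(H,\mathbb{C})$, and that passing to the bidual loses nothing, for which the observation that a norm-closed subtriple of a Hilbert space is merely a closed subspace is exactly what is required. Beyond that, the argument is a formality, being nothing more than the combination of Ding's theorem with the rank-one structure theory; no new estimates arise.
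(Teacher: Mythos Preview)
Your proposal is correct and follows the same route as the paper: identify rank-one JB$^*$-triples with complex Hilbert spaces and then invoke Ding's theorem \cite{Ding2002}. The paper's ``proof'' is simply the paragraph preceding the corollary, which records that every rank-one Cartan factor is of the form $L(H,\mathbb{C})$ and that Ding's result then applies; you have supplied more detail on the structure-theoretic step (passing to the bidual, indecomposability, closed subtriples of Hilbert spaces being closed subspaces) than the paper, which treats this as known from the classification in \cite{Ka97}, but the strategy is identical.
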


In the case of finite dimensional JB$^*$-triples, the arguments above can be adapted to obtain a generalization of \cite{Tan2016-2}.

\begin{proposition}\label{p surj isometries preserve rank} Let $f: S(E) \to S(B)$ be a surjective isometry between the unit spheres of two finite dimensional JB$^*$-triples. Suppose that $e_1$ and $e_2$ are two orthogonal finite rank tripotents in $E$. Then $f(e_1)$ and $f(e_2)$ are two orthogonal finite rank tripotents in $B$ with $f(e_1 + e_2 ) = f(e_1) + f(e_2)$. Furthermore, if $e$ is a rank $k$ tripotent in $E$ then $f(e)$ is a rank $k$ tripotent in $B$.
\end{proposition}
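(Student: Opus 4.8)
The strategy is to run the argument of Theorem~\ref{t Tingley antipodes for finite rank}, but to replace Lemma~\ref{l two finite rank tripotents at distance 2} (whose proof is exactly what forced the rank~$\geq 4$ restriction there) by Tingley's classical antipodal theorem, which is at our disposal precisely because $E$ and $B$ are finite dimensional. First I would record the set-up: a finite dimensional JB$^*$-triple is reflexive, hence weakly compact by Theorem~\ref{thm compact triples BuncChu}; consequently $E^{**}=E$, $B^{**}=B$, every tripotent of $E$ or $B$ has finite rank and is compact, so it corresponds, via Theorem~\ref{thm norm closed faces} and Corollary~\ref{c compact tripotents in the bidual of a weakly compact JBtriple}, to a norm closed proper face $\{p\}_{\prime\prime}=(p+E_0^{**}(p)_1)\cap E$, and both Proposition~\ref{p surjective isometries between the spheres preserve norm closed faces} and Proposition~\ref{p surjective isometries between the spheres preserve finite rank tripotents} apply to $f$ and to $f^{-1}$. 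Moreover, by \cite[THEOREM, page 377]{Ting1987} we have $f(-x)=-f(x)$ for every $x\in S(E)$; it is this antipodal identity, rather than any hypothesis on low-rank summands, that will do the work.

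Next I would establish two transfer properties of $f$. \emph{Order preservation}: if $p,q$ are nonzero tripotents of $E$, Proposition~\ref{p surjective isometries between the spheres preserve finite rank tripotents}$(a)$ and $(d)$ give $f(\{p\}_{\prime\prime})=\{f(p)\}_{\prime\prime}$ and $f(\{q\}_{\prime\prime})=\{f(q)\}_{\prime\prime}$; since $f$ is a bijection between the spheres it preserves inclusions of faces, so combining with the anti-order isomorphism of Theorem~\ref{thm norm closed faces} we obtain $p\leq q\Leftrightarrow\{q\}_{\prime\prime}\subseteq\{p\}_{\prime\prime}\Leftrightarrow\{f(q)\}_{\prime\prime}\subseteq\{f(p)\}_{\prime\prime}\Leftrightarrow f(p)\leq f(q)$, and the same for $f^{-1}$. \emph{Orthogonality}: if $e_1\perp e_2$ are tripotents then $\|e_1\pm e_2\|=1$, so, using $f(-e_2)=-f(e_2)$, we get $\|f(e_1)\pm f(e_2)\|=\|f(e_1)-f(\mp e_2)\|=\|e_1\pm e_2\|=1$; since $f(e_1)$ is a tripotent (Proposition~\ref{p surjective isometries between the spheres preserve norm closed faces}$(c)$ when $e_1$ is minimal, Proposition~\ref{p surjective isometries between the spheres preserve finite rank tripotents}$(d)$ in general) and $f(e_2)\in B_1$, Lemma~\ref{l two finite rank tripotents at distance 1 are orthogonal} yields $f(e_1)\perp f(e_2)$.

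Now I would treat the frame case. Let $e_1,\dots,e_m$ be mutually orthogonal minimal tripotents in $E$, put $e=e_1+\dots+e_m$, $v_j=f(e_j)$, $v=f(e)$. By the orthogonality transfer, $v_1,\dots,v_m$ are mutually orthogonal minimal tripotents, and $v$ is a finite rank tripotent. Since $e_j\leq e$, order preservation gives $v_j\leq v$ for all $j$, hence $v_1+\dots+v_m\leq v$ (a sum of pairwise orthogonal tripotents lying below a tripotent lies below it). Applying order preservation to $f^{-1}$: from $v_j\leq v_1+\dots+v_m$ we get $e_j\leq f^{-1}(v_1+\dots+v_m)$ for every $j$, so $e=e_1+\dots+e_m\leq f^{-1}(v_1+\dots+v_m)$; from $v_1+\dots+v_m\leq v$ we get $f^{-1}(v_1+\dots+v_m)\leq f^{-1}(v)=e$. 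Therefore $f^{-1}(v_1+\dots+v_m)=e$, whence $v=f(e)=v_1+\dots+v_m$, that is, $f(e_1+\dots+e_m)=f(e_1)+\dots+f(e_m)$. The general statements then follow: given orthogonal finite rank tripotents $e_1\perp e_2$, choose a family $\{p_i\}_{i\in I_1\cup I_2}$ of mutually orthogonal minimal tripotents with $e_1=\sum_{i\in I_1}p_i$ and $e_2=\sum_{i\in I_2}p_i$; the frame case applied to $I_1$, $I_2$ and $I_1\cup I_2$ gives $f(e_1+e_2)=\sum_{i\in I_1\cup I_2}f(p_i)=f(e_1)+f(e_2)$, and since the $f(p_i)$ are pairwise orthogonal, $f(e_1)\perp f(e_2)$; finally, if $e=p_1+\dots+p_k$ is a rank $k$ tripotent, then $f(e)=f(p_1)+\dots+f(p_k)$ is a sum of $k$ mutually orthogonal minimal tripotents, hence has rank $k$.

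I expect the crux to be the equality $v=v_1+\dots+v_m$ in the frame case, i.e.\ that $f$ neither inflates nor collapses the rank of a sum of minimal tripotents. In the general weakly compact setting this is exactly where Lemma~\ref{l two finite rank tripotents at distance 2} and its rank $\geq 4$ hypothesis entered, through the ``distance $2$'' computation in Theorem~\ref{t Tingley antipodes for finite rank}; the replacement here is the interaction between Tingley's antipodal theorem (used to deduce $f(e_1)\perp f(e_2)$ from $\|e_1\pm e_2\|=1$) and the order structure on tripotents, exploited symmetrically for $f$ and $f^{-1}$, which together determine the rank with no restriction on low-rank direct summands. A subsidiary technical point is the careful verification that $f$ carries the face $\{p\}_{\prime\prime}$ exactly onto $\{f(p)\}_{\prime\prime}$, and hence respects $\leq$; this rests on Proposition~\ref{p surjective isometries between the spheres preserve finite rank tripotents} together with the anti-order isomorphism of Theorem~\ref{thm norm closed faces}.
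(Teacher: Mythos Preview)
Your argument is correct and takes a genuinely different route from the paper's own proof. Both proofs use Tingley's antipodal theorem $f(-x)=-f(x)$ as the substitute for Lemma~\ref{l two finite rank tripotents at distance 2}, but they exploit it in different ways. The paper works computationally with the affine structure: it places $e_1=\tfrac12\big((e_1-e_2)+(e_1+e_2)\big)$ inside the face $\{e_1\}_{\prime\prime}$, applies Proposition~\ref{p surjective isometries between the spheres preserve finite rank tripotents}$(b)$, and then uses the translation maps $T_{e_2}$ from Proposition~\ref{p surjective isometries between the spheres preserve finite rank tripotents}$(c)$ together with $f(e_1-e_2)=-f(e_2-e_1)$ to force $T_{e_2}(e_1)=f(e_1)$, whence $f(e_1+e_2)=f(e_1)+f(e_2)$; orthogonality then comes from \eqref{eq orthogonality via pm}, since $f(e_1)\pm f(e_2)$ are tripotents. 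You instead extract orthogonality immediately from $\|f(e_1)\pm f(e_2)\|=1$ via Lemma~\ref{l two finite rank tripotents at distance 1 are orthogonal}, and then argue purely order-theoretically: Proposition~\ref{p surjective isometries between the spheres preserve finite rank tripotents}$(a)$,$(d)$ plus the anti-order isomorphism of Theorem~\ref{thm norm closed faces} show that $f$ and $f^{-1}$ preserve $\leq$ on tripotents, and a two-sided sandwich $e\leq f^{-1}(v_1+\cdots+v_m)\leq e$ pins down $f(e)=v_1+\cdots+v_m$. Your approach is more conceptual and isolates cleanly the two ingredients (orthogonality transfer, order transfer) that drive the result; the paper's computation, on the other hand, stays closer to the Mankiewicz machinery already set up and avoids invoking the lattice fact that a sum of mutually orthogonal tripotents each $\leq v$ remains $\leq v$. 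That lattice fact is standard (view the $v_j$ as orthogonal projections in the JB$^*$-algebra $B_2(v)$), but you might add a word of justification if you write this up.
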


\begin{proof} Let us consider the tripotents $v = e_1-e_2$ and $\widetilde{v}= e_1+e_2$. Since $E$ and $B$ are finite dimensional, by Tingley's theorem, we know that $f(e_1-e_2) = f(u) = - f(-u) = - f(e_2-e_1)$ (see \cite{Ting1987}). By Proposition \ref{p surjective isometries between the spheres preserve finite rank tripotents}$(d)$, $w_1 = f(e_1)$, $w_2 = f(e_2)$, $w = f(e_1- e_2)= f(u)$ and $\widetilde{w} = f(e_1+e_2)$ are finite rank tripotents in $B$. Let $T_{e_1}$, $T_{e_2}$, $T_{v}$ and $T_{\widetilde{v}}$ denote the surjective real linear isometries given by Proposition \ref{p surjective isometries between the spheres preserve finite rank tripotents}$(c)$.

Since $e_1$, $v$ and $\widetilde{v}$ lie in the face $\{e_1\}_{\prime\prime}$ and $e_1 = 1/2 (v+ \widetilde{v})$, by Proposition \ref{p surjective isometries between the spheres preserve finite rank tripotents}$(b)$ we have $$ w_1 = f(e_1) = \frac12 (f(v) + f(\widetilde{v})) = \frac12 (w + \widetilde{w}) = \frac12 (f(v) + f(\widetilde{v}))= \frac12 (f(e_1-e_2) + f(e_1+e_2)) $$ $$= \frac12 (- f(e_2-e_1) + w_2 + T_{e_2} (e_1)) = \frac12 (- w_2 - T_{e_2} (-e_1) + w_2 + T_{e_2} (e_1)),$$ which shows that $T_{e_2} (e_1) = w_1 = f(e_1)$. Therefore $$\widetilde{w}= f(e_1+e_2) = w_2 + T_{e_2} (e_1) = w_2 + w_1 = f(e_2) + f(e_1).$$

Replacing $e_2$ with $-e_2$ we prove $${w}= f(e_1-e_2) = f(e_1) + f(-e_2) = f(e_1) -f(e_2) = w_1 - w_2.$$ we have therefore shown that $w_1\pm w_2$, $w_1$ and $w_2$ are tripotents in $B$, and hence $w_1\perp w_2$ (compare \eqref{eq orthogonality via pm}).\smallskip

Since by Proposition \ref{p surjective isometries between the spheres preserve norm closed faces}$(c)$ $f$ maps minimal tripotents to minimal tripotents, we deduce from the first statement that $f$ maps tripotents of rank $k$ to tripotents of rank $k$.
\end{proof}

When in the proof of Corollary \ref{cor Te coincides with f on the orthogonal of e} and Lemma \ref{l Te and Tv coincide on the intersection}, we replace Corollary \ref{c rank preservation bewteen elementary} with Proposition \ref{p surj isometries preserve rank} we obtain:

\begin{corollary}\label{cor Te coincides with f on the orthogonal of e finite dimensional} Let $f: S(E) \to S(B)$ be a surjective isometry between the unit spheres of two finite dimensional JB$^*$-triples. Let $e$ be a finite rank tripotent in $E$, $u= f(e)$, and let $T_e : E_0(e) \to B_0(u)$ be the surjective real linear isometry given by Proposition \ref{p surjective isometries between the spheres preserve finite rank tripotents}$(c)$. Then $f(x) = T_e (x)$ for all $x\in S(E_0(e))$.$\hfill\Box$
\end{corollary}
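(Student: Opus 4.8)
The plan is to transcribe the proof of Corollary~\ref{cor Te coincides with f on the orthogonal of e} into the finite-dimensional setting, with Proposition~\ref{p surj isometries preserve rank} playing the role of Corollary~\ref{c rank preservation bewteen elementary}. First I would record that a finite-dimensional JB$^*$-triple is a finite direct sum of finite-dimensional Cartan factors, each of which is elementary; hence it is a $c_0$-sum of elementary JB$^*$-triples and therefore weakly compact by Theorem~\ref{thm compact triples BuncChu}. Consequently Propositions~\ref{p surjective isometries between the spheres preserve finite rank tripotents} and~\ref{p identity principle}, stated there for weakly compact JB$^*$-triples, are available for $E$ and $B$; in particular $T_e$ is the surjective real-linear isometry furnished by Proposition~\ref{p surjective isometries between the spheres preserve finite rank tripotents}$(c)$, and $f(e+x)=u+T_e(x)$ for every $x\in E_0(e)_1$.

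Next I would check that $T_e(e_1)=f(e_1)$ for every minimal tripotent $e_1$ of $E_0(e)$. Since $e_1\in E_0(e)$ we have $e_1\perp e$, and by the Peirce arithmetic $e_1$ is then a minimal---hence finite rank---tripotent of $E$ as well; thus $e$ and $e_1$ are orthogonal finite rank tripotents of $E$, so Proposition~\ref{p surj isometries preserve rank} gives $f(e+e_1)=f(e)+f(e_1)=u+f(e_1)$, while Proposition~\ref{p surjective isometries between the spheres preserve finite rank tripotents}$(c)$ gives $f(e+e_1)=u+T_e(e_1)$. Comparing the two expressions yields $T_e(e_1)=f(e_1)$. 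If now $w=w_1+\dots+w_k$ is an arbitrary finite rank tripotent of $E_0(e)$, written as a sum of mutually orthogonal minimal tripotents of $E_0(e)$, then iterating Proposition~\ref{p surj isometries preserve rank} gives $f(w)=f(w_1)+\dots+f(w_k)$, and by the previous step together with the real-linearity of $T_e$ this equals $T_e(w_1)+\dots+T_e(w_k)=T_e(w)$.

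Finally, since $E_0(e)$ is a JB$^*$-subtriple of the weakly compact JB$^*$-triple $E$ and $T_e\colon E_0(e)\to B$ is a bounded real-linear operator agreeing with $f$ on every finite rank tripotent of $E_0(e)$, Proposition~\ref{p identity principle} (applied with $A=E_0(e)$ and $T=T_e$) gives $f(x)=T_e(x)$ for all $x\in S(E_0(e))$, which is the assertion. I do not anticipate a real obstacle here: the argument is a line-by-line copy of the compact case, and the only points requiring a word of justification are that finite dimensionality guarantees the weak compactness needed to invoke the earlier results, and that minimality in the Peirce-zero subtriple $E_0(e)$ is inherited from $E$---both routine consequences of the structure theory and the Peirce arithmetic already recalled.
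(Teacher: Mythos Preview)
Your proposal is correct and follows essentially the same route as the paper: the paper states the corollary without proof, indicating simply that one replaces Corollary~\ref{c rank preservation bewteen elementary} by Proposition~\ref{p surj isometries preserve rank} in the argument for Corollary~\ref{cor Te coincides with f on the orthogonal of e}, and this is precisely what you carry out. Your additional remarks---that finite-dimensional JB$^*$-triples are weakly compact so that Propositions~\ref{p surjective isometries between the spheres preserve finite rank tripotents} and~\ref{p identity principle} apply, and that minimality of a tripotent in $E_0(e)$ coincides with minimality in $E$---fill in exactly the small justifications the paper leaves implicit.
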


\begin{lemma}\label{l Te and Tv coincide on the intersection finite dimensional} $f: S(E) \to S(B)$ be a surjective isometry between the unit spheres of two finite dimensional JB$^*$-triples. Let $e_1$ and $e_2$ be two orthogonal finite rank tripotents in $E$, and let $T_{e_1}$ and $T_{e_2}$ be the maps given by Proposition \ref{p surjective isometries between the spheres preserve finite rank tripotents}$(c)$. Then $$\hbox{$T_{e_{1}} (x) = T_{e_{2}} (x)$ for all $x\in E_0(e_{1}) \cap E_0(e_{2})$.}$$ $\hfill\Box$
\end{lemma}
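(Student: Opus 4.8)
The plan is to run the proof of Lemma \ref{l Te and Tv coincide on the intersection} essentially verbatim, replacing its single use of Corollary \ref{c rank preservation bewteen elementary} by the finite dimensional rank–preservation statement of Proposition \ref{p surj isometries preserve rank}. Since a finite dimensional JB$^*$-triple is reflexive, hence (weakly) compact, Proposition \ref{p surjective isometries between the spheres preserve finite rank tripotents} is available and, in particular, the maps $T_{e_1}, T_{e_2}$ and the identities $f(e+x)=f(e)+T_e(x)$ for $x\in E_0(e)_1$ are at our disposal.

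First I would fix $x\in E_0(e_1)\cap E_0(e_2)$ with $\|x\|\le 1$ and put $u_j=f(e_j)$. Because $e_1\perp e_2$ we have $e_1\in E_0(e_2)$ and $e_2\in E_0(e_1)$, and since $e_i\in E_2(e_i)$ while $x\in E_0(e_i)$, Peirce arithmetic gives $e_1\perp x$ and $e_2\perp x$; consequently $e_1+x\perp e_2$, $e_2+x\perp e_1$, and the norm identity for orthogonal elements yields $\|e_1+x\|=\|e_2+x\|=1$ and $\|e_1+e_2+x\|=1$. Thus $e_1+x\in E_0(e_2)_1$, $e_2+x\in E_0(e_1)_1$, and applying Proposition \ref{p surjective isometries between the spheres preserve finite rank tripotents}$(c)$ once with $e=e_2$ and once with $e=e_1$, together with the real linearity of $T_{e_2}$ and $T_{e_1}$, gives
\[
u_2 + T_{e_2}(e_1) + T_{e_2}(x) = f(e_1+e_2+x) = u_1 + T_{e_1}(e_2) + T_{e_1}(x).
\]

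Next I would evaluate $f$ on $e_1+e_2$. On one hand Proposition \ref{p surj isometries preserve rank} gives $f(e_1+e_2)=f(e_1)+f(e_2)=u_1+u_2$ (and, incidentally, that $f(e_1+e_2)$ is a finite rank tripotent, so the use of Proposition \ref{p surjective isometries between the spheres preserve finite rank tripotents}$(c)$ on it below is legitimate); on the other hand, viewing $e_1+e_2$ as $e_2+e_1$ with $e_1\in E_0(e_2)_1$ and as $e_1+e_2$ with $e_2\in E_0(e_1)_1$, Proposition \ref{p surjective isometries between the spheres preserve finite rank tripotents}$(c)$ gives $f(e_1+e_2)=u_2+T_{e_2}(e_1)=u_1+T_{e_1}(e_2)$. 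Comparing, $T_{e_2}(e_1)=u_1$ and $T_{e_1}(e_2)=u_2$. Substituting these two equalities into the displayed identity and cancelling $u_1+u_2$ from both sides leaves $T_{e_2}(x)=T_{e_1}(x)$, which is the claim.

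I do not anticipate a real obstacle here: the entire content is the bookkeeping that the auxiliary vectors $e_1+x$, $e_2+x$, $e_1+e_2$ and $e_1+e_2+x$ lie in the Peirce-zero balls (respectively on $S(E)$) where Proposition \ref{p surjective isometries between the spheres preserve finite rank tripotents}$(c)$ applies, and the only non-formal ingredient, the additivity $f(e_1+e_2)=f(e_1)+f(e_2)$, is exactly what Proposition \ref{p surj isometries preserve rank} supplies in the finite dimensional setting. The one point I would be careful to state explicitly is the reduction $E_0^{**}(e_j)\cap E=E_0(e_j)$ valid in finite dimensions, so that the statements of Proposition \ref{p surjective isometries between the spheres preserve finite rank tripotents} are being used in the form needed.
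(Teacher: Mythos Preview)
Your proof is correct and follows exactly the approach of the paper: it reproduces the argument of Lemma \ref{l Te and Tv coincide on the intersection} with the single appeal to Corollary \ref{c rank preservation bewteen elementary} (for the identity $f(e_1+e_2)=f(e_1)+f(e_2)$) replaced by Proposition \ref{p surj isometries preserve rank}, which is precisely what the paper says to do. You have simply made explicit the routine verifications (Peirce orthogonality, norm bounds, $E_0^{**}(e_j)\cap E=E_0(e_j)$) that the paper leaves implicit.
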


An obvious adaptation of the arguments in the proof of Theorem \ref{thm Tingley thm ofr weakly compact JB*-triples rank 5 finite dimensional} can be now applied to establish a generalization of the main result in \cite{Tan2016-2}.

\begin{theorem}\label{thm Tingley thm ofr weakly compact JB*-triples rank 5 finite dimensional} Let $f: S(E) \to S(B)$ be a surjective isometry between the unit spheres of two finite dimensional JB$^*$-triples. Suppose $E$ has rank greater or equal than 5. Then there exists a surjective real linear isometry $T: E\to B$ satisfying $T|_{S(E)} = f$. $\hfill\Box$
\end{theorem}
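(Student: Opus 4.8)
The plan is to mirror, essentially verbatim, the proof of Theorem \ref{thm Tingley thm ofr weakly compact JB*-triples rank 5}, replacing each weakly-compact ingredient by its finite-dimensional analogue. First I would note that a finite dimensional JB$^*$-triple is automatically weakly compact (indeed reflexive), so that Propositions \ref{p surjective isometries between the spheres preserve norm closed faces}, \ref{p surjective isometries between the spheres preserve finite rank tripotents} and \ref{p identity principle} apply directly; the only place where the hypothesis ``no direct summand of rank $\leq 3$'' was used was through Corollary \ref{c rank preservation bewteen elementary} (rank preservation and additivity of $f$ on orthogonal finite rank tripotents), and this is now supplied, \emph{without any rank restriction}, by Proposition \ref{p surj isometries preserve rank}. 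Likewise Corollary \ref{cor Te coincides with f on the orthogonal of e finite dimensional} and Lemma \ref{l Te and Tv coincide on the intersection finite dimensional} replace Corollary \ref{cor Te coincides with f on the orthogonal of e} and Lemma \ref{l Te and Tv coincide on the intersection}.

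Concretely, since $\operatorname{rank}(E)\geq 5$, I would fix mutually orthogonal minimal tripotents $e_1,e_2,e_3$ in $E$, set $v_j=f(e_j)$, and take the surjective real linear isometries $T_{e_j}\colon E_0(e_j)\to B_0(v_j)$ from Proposition \ref{p surjective isometries between the spheres preserve finite rank tripotents}$(c)$. Using the joint Peirce decomposition
\[
E=\mathbb{C}e_1\oplus\bigl(E_1(e_1)\cap E_1(e_2)\bigr)\oplus\bigl(E_1(e_1)\cap E_0(e_2)\bigr)\oplus E_0(e_1),
\]
I define $T\colon E\to B$ on $x=\lambda_x e_1+x_{11}+x_{10}+x_0$ by $T(x):=T_{e_3}(\lambda_x e_1)+T_{e_3}(x_{11})+T_{e_2}(x_{10})+T_{e_1}(x_0)$, which is well defined and real linear by uniqueness of the decomposition and real linearity of the $T_{e_j}$. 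Then, exactly as in Theorem \ref{thm Tingley thm ofr weakly compact JB*-triples rank 5}, for an arbitrary minimal tripotent $e\in E$ I pick (this is where $\operatorname{rank}(E)\geq 5$ is used) a minimal tripotent $e_4\perp e_1,e_2,e_3,e$; decompose $e=\lambda_e e_1+e_{11}+e_{10}+e_0$ and check via Peirce arithmetic that each of the four summands lies in $E_0(e_4)$; apply Lemma \ref{l Te and Tv coincide on the intersection finite dimensional} to conclude $T(e)=T_{e_4}(e)$, and Corollary \ref{cor Te coincides with f on the orthogonal of e finite dimensional} together with $e\in E_0(e_4)$ to get $T_{e_4}(e)=f(e)$. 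Hence $T(e)=f(e)$ for every minimal tripotent, and by Proposition \ref{p surj isometries preserve rank} (additivity of $f$ on orthogonal minimal tripotents) also $T(u)=f(u)$ for every finite rank tripotent $u$. Finally Proposition \ref{p identity principle} yields $T(x)=f(x)$ for all $x\in S(E)$, so $T$ is the desired surjective real linear isometry.

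I do not anticipate a genuine obstacle: the finite dimensional setting is strictly easier, because every element of $S(E)$ is already a finite convex combination of finite rank tripotents (no density argument needed) and every tripotent is automatically of finite rank. The one point requiring a little care is bookkeeping: Proposition \ref{p surj isometries preserve rank} depends on Tingley's classical theorem $f(-x)=-f(x)$ in finite dimensions \cite{Ting1987}, whereas in the weakly compact case the corresponding antipodal statement (Theorem \ref{t Tingley antipodes for finite rank}) was the step that forced the rank restriction; here that restriction evaporates, so the finite dimensional theorem needs no hypothesis beyond $\operatorname{rank}(E)\geq 5$. (Note that the proof refers to ``the arguments in the proof of Theorem \ref{thm Tingley thm ofr weakly compact JB*-triples rank 5 finite dimensional}'' where it means Theorem \ref{thm Tingley thm ofr weakly compact JB*-triples rank 5}; this is a harmless self-reference typo in the statement preceding the theorem, and the intended model proof is the weakly compact one.)
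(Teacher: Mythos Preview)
Your proposal is correct and matches the paper's intended argument exactly: the paper gives no explicit proof, stating only that ``an obvious adaptation of the arguments in the proof of Theorem \ref{thm Tingley thm ofr weakly compact JB*-triples rank 5}'' (modulo the self-reference typo you noticed) yields the result, with Proposition \ref{p surj isometries preserve rank}, Corollary \ref{cor Te coincides with f on the orthogonal of e finite dimensional}, and Lemma \ref{l Te and Tv coincide on the intersection finite dimensional} replacing their weakly compact counterparts. Your write-up is precisely that adaptation.
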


We do not know the answer in the remaining cases.

\begin{problem} Let $C$ and $B$ be Cartan factors of rank $\leq 4$. Suppose $f: S(C) \to S(B)$ is a surjective isometry. Does $f$ extend to a surjective real linear isometry from $C$ into $B$?
\end{problem}\smallskip

\textbf{Acknowledgements} First author partially supported by the Spanish Ministry of Economy and Competitiveness and European Regional Development Fund project no. MTM2014-58984-P and Junta de Andaluc\'{\i}a grant FQM375. The second author was supported in part by Grants-in-Aid for Scientific Research Grant Numbers 16J01162, Japan Society for the Promotion of Science.

\end{document}